\newenvironment{proof}[1][Proof]{\textbf{#1.} }{\ \rule{0.5em}{0.5em}}
\newcommand{\eq}[1]{\begin{equation}\label{#1}}
\newcommand{\en}{\end{equation}}
\newtheorem{theorem}{Theorem}[section]
\newtheorem{corollary}[theorem]{Corollary}
\newtheorem{lemma}[theorem]{Lemma}
\newtheorem{proposition}[theorem]{Proposition}
\newtheorem{remark}[theorem]{Remark}
\newcommand{\ds}{\displaystyle}
\newcommand{\norm}[1]{\| #1 \|}
\renewcommand{\d}{{\rm d}}
\def\inv{^{-1}}
\newcommand{\up}[1]{^{\text#1}}
\newcommand{\N}{\mathbb{N}}
\newcommand{\R}{\mathbb{R}}
\title{Eigenvector-based acceleration strategies for gradient-type methods}
\author{Jean-Paul Chehab\thanks{LAMFA ({\small UMR} CNRS 7352),
    Universit\'e de Picardie Jules Verne, France ({\tt jean-paul.chehab@u-picardie.fr}).}
  \and Gaspard Kemlin\thanks{LAMFA ({\small UMR} CNRS 7352),
    Universit\'e de Picardie Jules Verne, France ({\tt gaspard.kemlin@u-picardie.fr}).} \and Marcos
  Raydan\thanks{Center for Mathematics and Applications (NovaMath),  NOVA University of Lisbon,
  Portugal ({\tt m.raydan@fct.unl.pt}).} \and Yousef
  Saad\thanks{Department of Computer Science and Engineering, University of
    Minnesota, USA ({\tt saad@umn.edu}).}}
\begin{document}

\date{}

\maketitle

\begin{abstract}
  Several strategies are described and analyzed to speed-up gradient-type
  methods when applied to the minimization of strictly convex quadratics
  and strictly convex functions. The proposed techniques focus on relaxing
  the traditional optimal step length associated with gradient methods,
  including the steepest descent (SD) and the minimal residual (MR) methods.
  Such a relaxation avoids the well-known negative zigzag effect and
  allows the iterates to move in the entire space which in turn implies that
  every so often the search direction approaches some eigenvector of the
  underlying Hessian matrix. The proposed speedups then rely on taking advantage
  of the properties of the Lanczos method once a search direction that
  approaches an eigenvector has been identified in order to accelerate the
  convergence towards the global minimizer. After analyzing the proposed
  strategies, we illustrate them on the global minimization of strictly convex
  functions and compare their performances with those of two well-established
  gradient-type methods:  ABBmin and LMSD.
\end{abstract}

{\small
  {\bf Keywords:} {Gradient-type methods, Lanczos method, convex optimization}

  {\bf  AMS Subject Classification}: 65F10, 65K05, 15A06, 90C25, 90C06}

\section{Introduction}\label{sec:intro}

Gradient-type methods are still widely used for solving the following
large-scale optimization problem \begin{equation} \min_{x\in \mathbb{R}^{n}}
  f(x), \label{genprb} \end{equation} where $f:\R^{n}\to \mathbb{R}$ is
continuously differentiable.  From a given initial guess $x_0\in \R^n$, they are
described by the following iterative scheme \begin{equation} x_{k+1}  =  x_k -
  \alpha_k g_k, \label{genscheme} \end{equation} where $g_k = \nabla f(x_k)$ is
the gradient at $x_k$, and $\alpha_k >0$ is the steplength determined by the
method.
Some of the advantages that motivate the current use of gradient-type methods
include: they are simple to understand and implement; they require relatively
low memory and can handle a large number of variables making them suitable for
big data and large-scale machine learning applications, see, e.g., \cite{bbkm,
  bkms, Bottou}; and they are often supported by strong theoretical convergence
results. For instance, under standard conditions, they are guaranteed to
converge to a local minimum or even a global minimum in the case of convex
functions \cite{Andrei22, bertsekas99, Grippo23}. However, it is also well known
that the most popular gradient-type methods, in which the step length is chosen
to be optimal in some sense,  may take many iterations to locate a solution as
their asymptotic behavior depends on the spectral properties of the Hessian
matrix of $f$, so they may struggle in the presence of ill-conditioning; see,
e.g., \cite{ddrt13, drtz18, gonzaga, zoumag} and references therein. For the
sake of completeness, we briefly recall the two most emblematic gradient-type
methods.

Cauchy’s steepest descent (SD) algorithm \cite{cauchy} is the oldest
gradient-type method for solving (\ref{genprb}), and its optimal choice of
steplength   $\alpha_k^\text{SD}$ is given by \begin{equation}
  \alpha_k^\text{SD} \coloneqq \arg\min_{\alpha}  f(x_k - \alpha g_k).  \label{alfaSD}
\end{equation} The poor practical behavior of (\ref{genscheme})-(\ref{alfaSD})
has been known for many decades. In the late 1950s, Akaike \cite{Akaike}
characterized the asymptotic behavior of the method for strictly convex
quadratic functions. Akaike established that, with the choice of steplength
(\ref{alfaSD}),  the search directions asymptotically alternate within the
two-dimensional subspace spanned by the eigenvectors corresponding to the
largest   and smallest  eigenvalues of the constant Hessian matrix of $f$.
Moreover, in that case, $g_{k+1}^Tg_k=0$ for all $k$. This combination of facts
causes SD to zig-zag on a 2-dimensional subspace as it approaches the minimizer,
and hence it can be very slow on ill-conditioned problems. We recommend
\cite{gonzaga} for a deeper understanding of this type of negative behavior of
the SD method.

Closely related to SD is the Minimal Gradient method; see, e.g., \cite{Dai03,
  DeAsmun14, Huang21} (which in the case of solving linear systems is better
known as the minimal residual (MR) method) . In the MR  method, the optimal
steplength $\alpha_k^\text{MR}$  minimizes the 2-norm of the gradient at the
next iterate \begin{equation} \alpha_k^\text{MR} \coloneqq \arg\min_{\alpha} \|\nabla
  f(x_k - \alpha g_k)\|. \label{alfaMG} \end{equation} Many of the asymptotic
results for SD carry over analogously to the MR method. It is widely accepted
that the MR method, given by (\ref{genscheme})-(\ref{alfaMG}), can also perform
poorly and has similar asymptotic behavior as the SD method, i.e., it
asymptotically zigzags in a two-dimensional subspace; see \cite{Huang21} for a
formal proof of these facts.

Consequently, a tempting  and perhaps surprising  option to avoid the zigzag
curse, which in turn produces extremely slow schemes, is to disturb the exact
optimal choice of steplength associated with the negative gradient direction. In
this regard, Raydan and Svaiter \cite{Raydan02} investigate the random choice of
a parameter in $(0,2)$ that multiplies the  optimal step length, to avoid the
zigzag effect. They study that  random mechanism, precisely  to stress out that
the poor behavior of the SD method is due to the optimal  choice of steplength
and not to the choice of the search direction, and indeed a significant
improvement on the behavior of the SD method is observed. It is worth noticing
that they provide theoretical guarantees, confirming that SD with random
relaxation converges to the unique global minimizer when dealing with
strictly convex quadratic functions. In the last two decades, several variants
and improved relaxation schemes have been proposed and analyzed that also avoid
the optimal step length and thus the zigzag curse; see, e.g., \cite{zoumag} for
a comprehensive review.  Recently, in \cite{MacD25}, much insight has been added
to the topic of relaxed steplength selection, and the convergence of a large
family of options has been studied.  We also refer to
\cite{kalousekSteepestDescentMethod2017} for a study of the steepest descent
algorithm with fully random steplengths.

In this paper, in order to accelerate the gradient-type methods
(\ref{genscheme}), we add understanding to the  relaxed techniques for breaking
the zigzagging pattern, and also propose and analyze several strategies to
speed-up gradient-type methods when applied to the minimization of strictly
convex quadratics and strictly convex functions. Once the zigzag curse is
avoided,  our speed-up schemes take advantage of the relationship between
\eqref{genscheme} and the shifted power method, which in turn implies that every
so often the residual approaches some eigenvector of the underlying Hessian
matrix, therefore providing ideal search directions. The most effective proposed
speedup scheme in this work then relies on taking advantage of the properties of the
Lanczos method once a search direction that approaches an eigenvector has been
identified. The idea of exploiting the alignment of the search direction with
eigendirections of the Hessian matrix to construct appropriate steplengths have
also been explored in different ways in
\cite{ddrt13,frassoldatiNewAdaptiveStepsize2008}. The main novelty of the
present work is then the combination of the relaxed gradient-type methods with
the Lanczos algorithm to fully exploit ideal search directions once the gradient
approaches some eigenvector of the Hessian matrix. In particular, we show that
this allows to reach the sought-after minimizer with a reduced number of
matrix-vector products.

The rest of the paper is organized as follows. Section~\ref{sec:quad} is dedicated to some review of standard results on the MR and SD methods for minimizing quadratic functionals. We also make some numerical observations which are the motivation for the various spectral acceleration techniques from  Section~\ref{sec:spectral}. The latter is at the heart of the paper: we introduce an acceleration method based on Lanczos subspace iterations, with various adaptive variants, that we complete with an analogy to algebraic multigrid methods. 
Section~\ref{sec:convex} is then dedicated to the extension of these accelerations techniques to the global minimization of non-quadratic convex functions. Finally, in Section~\ref{sec:det_analysis}, we analyze mathematically and prove a number of results on the observations made in Section~\ref{sec:quad}.

\def\lbet{\bar \lambda_{\beta,k}}
\def\llbet{\bar \lambda_{\beta,k+1}}

\section{Numerical observations for quadratic functionals}
\label{sec:quad}

\textbf{General considerations on quadratic functionals.} An important nonlinear
instance of \eqref{genprb} is the minimization of the strictly convex quadratic:
\begin{equation} \label{quad} f(x) = \frac{1}{2} x^\top Ax - b^\top  x,
\end{equation} where $b\in \R^n$ and $A\in \R^{n\times n}$ is a symmetric and
positive definite (SPD) matrix. Since $A$ is SPD and the gradient $g(x)\equiv
\nabla f(x) = Ax-b$, then the global minimizer of \eqref{quad} is the unique
solution $x^*= A^{-1}b$ of the linear system $A x = b$. The solution of
\eqref{quad} is of fundamental importance in the field of applied mathematics
and in numerous engineering developments; see \cite{Saad00}. Moreover, solving
it efficiently is usually a pre-requisite for a method to be generalized to
solve more general optimization problems.  In addition, by Taylor’s expansion, a
general smooth function can be approximated by a quadratic function near a
minimizer. Thus, the local convergence behavior of gradient methods is often
reflected by solving \eqref{quad}. For this reason, we make in this section, a
number of observations that are relevant to motivate the proposed acceleration
schemes, introduced in Section~\ref{sec:spectral}.

First, for solving \eqref{quad}, the optimal steplengths for SD and MR are
respectively given by, with $g_k = \nabla f(x_k)$,
\begin{align}
  \alpha_k^\text{SD} &= \frac{g_k^\top  g_k}{g_k^\top  A g_k} \label{eq:alpSD} \\
  \alpha_k^\text{MR} &= \frac{g_k^\top  A g_k}{ g_k^\top  A^2 g_k} . \label{eq:alpMR}
\end{align}
Notice that  both steplengths defined above are inverses of  Rayleigh quotients
of $A$ for some special vector in $\R^n$. By simple algebraic manipulations,  if
we apply either SD or MR  to solve \eqref{quad} but multiplying by 2 their
optimal step lengths, it follows that, for all $k\in\N$,
\[ f(x_k - 2 \alpha_k^\text{SD} g_k) = f(x_k)\;\; \mbox{ and }\;\;
  \|\nabla f(x_k - 2 \alpha_k^\text{MR} g_k)\| = \|\nabla f(x_k)\|.   \]
Hence, by continuity and strict convexity of both merit functions, using any
fixed relaxation parameter $\sigma\in(0,2)$ to multiply the optimal steplength
of either SD or MR  produces iterative methods for which the functions $f(x)$ or
$\|\nabla f(x)\|$ are monotonically decreasing. Moreover, for such relaxed
gradient-type schemes convergence of the iterates  to the unique solution of
\eqref{quad} is guaranteed. Most important, as long as $\sigma\ne 1$ (except
perhaps for a few selected iterations) the zigzag curse in a 2-dimensional
subspace is avoided, and the iterates move in the entire $n$-dimensional space;
see \cite[pp. 178-179]{MacD25} and \cite{Raydan02} for details. However, in this
case, the behavior of the iterations is not easily analyzed as it exhibits a
chaotic behavior \cite{vandendoel12}. What is rather interesting is the speed-up
observed for the situation when  $\sigma \ne 1$. Specifically, as discussed in
\cite[Section 8]{vandendoel12}, to avoid a possible chaotic behavior it is
recommended to choose the factor $\sigma\in (0,1)$ which has proved to be more
effective. In fact, experiments with relaxation values of $\sigma\in (1,2)$ have
been reported in \cite{MacD25} to produce a significant slower behavior (in the
sense that more iterations are required to achieve convergence), sometimes even
worse than using $\sigma=1$; see also \cite[Section 5]{vandendoel12}.

Now we recall that, for a given relaxation parameter $\sigma\in(0,2)$, the
relaxed gradient-type method reads
\begin{equation} \label{Relaxed}
  x_{k+1}  =  x_k - \sigma \alpha_k g_k \;\; \mbox{ for }\;\; \sigma\ne 1,
\end{equation}
where $\alpha_k$ stands either for $\alpha_k^\text{SD}$,
$\alpha_k^\text{MR}$. Notice that multiplying by $A$ and subtracting  the vector
$b$ on both sides, we obtain the gradient vector at step $k+1$:
\begin{equation} \label{eq:PowerM}
  g_{k+1}  =  g_k - \sigma \alpha_k Ag_k= (I-\sigma \alpha_kA)g_k=
  \left(\prod_{i=0}^k (I-\sigma \alpha_iA)\right)g_0.
\end{equation}
This establishes a relationship between the sequence $(g_k)_{k\in\N}$ generated
by \eqref{Relaxed} and the shifted power method. It is worth noting that this
connection with the shifted power method holds in principle for any
gradient-type method applied to \eqref{quad}. However, this  relationship is
more complex than just a common shifted power method. For instance, when
$\sigma=1$ the iterates tend to follow the zigzag behavior established by Akaike
\cite{Akaike}. In this case, it is known that for SD we have $g_{k+1}^\top
g_k=0$ for all $k$ and the iterates asymptotically zigzag in a 2-dimensional
subspace, forcing an angle bounded away from zero between the gradient vectors
and the 2 extreme eigenvectors of the matrix $A$. Moreover, it is also clear
that in the presence of the zigzag effect the gradient vectors asymptotically
tend to maintain orthogonality with all the other $n-2$ eigenvectors of $A$.
Therefore, under the zigzag effect no gradient vector $g_k$ approaches any
eigenvector of $A$. On the positive side, once the zigzag curse is avoided, the
iteration \eqref{eq:PowerM} resembles a  shifted power method and this suggests
that every so often the search direction $g_k$ approaches some eigenvector of
the matrix $A$. In addition, it turns out that even when the sequence of
gradients does not converge to an eigenvector, it still tends to have strong
components in a small number of eigenvectors, suggesting that projection
processes onto subspaces of dimension larger than $1$ may be useful, which we
explore later in Section~\ref{sec:spectral}.

\textbf{Numerical setting.} In all this section and Section~\ref{sec:spectral},
we perform numerical tests on a quadratic functional given by a $900 \times
900$ matrix $A$, obtained from the finite differences discretization of a
Poisson problem on a $30 \times 30$ grid. Throughout the paper we denote by $0.02 \approx
\lambda_1 \le \lambda_2 \le \cdots \le \lambda_n \approx 7.98 $ the eigenvalues of $A$ and
$u_1, u_2, \cdots, u_n$ an associated orthonormal set of eigenvectors. We denote
by $U$ the associated unitary matrix of eigenvectors. Finally, for every method,
the source term $b$ is generated randomly and we start from a random point
$x_0\in\R^n$.

\textbf{Numerical observations.} We begin with a few illustrations of what is
observed on the matrix $A$ introduced above when using the MR and SD methods. In
Figure~\ref{fig:RQfig}, we display the Rayleigh quotients, i.e., the inverses of
the steplengths $\alpha_k\up{MR}$ observed for this problem in two different
cases, $\sigma=0.8$ and $\sigma=1.8$. For $1 < \sigma < 2$ large enough the
oscillatory behavior disappears and the Rayleigh quotients seem to converge to
the value $\sigma (\lambda_1 + \lambda_n)/2$.  An example is shown in
Figure~\ref{fig:RQfig} where $\sigma$ takes the value $1.8$. For the case
$\sigma = 0.8$ these Rayleigh quotients tend to oscillate back and forth between
a high and a low value.  It is remarkable that there is a `bias' in the left
curve: whereas the median of the eigenvalues is 4.0, here the oscillation takes
place around a value significantly below this median. Visually it seems that the
oscillations hover around the value 3.6 which is the product of $\sigma$ by the
median $(\lambda_1 + \lambda_n)/2 = 4$. This is also the value to which the
Rayleigh quotients seem to converge when $\sigma = 1.8$. Similar observations are
made for the SD method and we refer to Theorem~\ref{thm:55} for a justification in a simplified setting.  This suggests that, when $\sigma \in (0,1)$, the
inverses of the steplengths span the entire spectrum, thus hitting from time to
time values that are very close to an eigenvalue of $A$.

Next we consider the sequence of normalized residuals and examine the components
of these vectors in the eigenbasis of $A$: we define the vectors
$(\beta_{i,k})_{i=1,\dots,n}$ given by $U^\top g_k / \|g_k\|$ (SD) or $U^\top g_k /
\|g_k\|_A$ (MR) -- a vector of length 900 in both cases. We plot the components
of these vectors in Figure~\ref{fig:betFig}, for a few values of $k$ and observe
that the components of the residuals are biased toward the extremal eigenvalues.
After 300 iterations, the normalized residual has a value very close to 1 in the
direction $u_1$ and a few (less than 10) components of order $10^{-4}$ at the
other end of the spectrum. After around $k=43$ there is not much qualitative
difference between the different figures obtained. Similarly, we plot in
Figure~\ref{fig:test_sigma} the convergence of (a few of) the normalized
components $|\beta_{i,k}|$ for different values of $i$ along the iterations $k$
for both the MR and SD methods. It clearly appears that, asymptotically, the
intermediate modes are vanishing and the residuals are mostly supported on a few
of the extremal modes, see Lemma~\ref{lem:random} for an analysis in a simplified setting.
Finally, in Figure~\ref{fig:test_sigma_res}, both MR and SD
with $\sigma=0.8$ exhibit a faster convergence of the residuals than $\sigma=1.8$,
in accordance with the discussion in \cite[Section 8]{vandendoel12} where
$\sigma\in(0,1)$ is shown to be more efficient. This is also reminiscent of
the conclusions from \cite{drtz18, fletcherLimitedMemorySteepest2012}, where steplengths
whose inverses span the whole spectrum of $A$ yield faster convergence.

\begin{figure}[h!]
  \centering
  \includegraphics[height=0.48\textwidth]{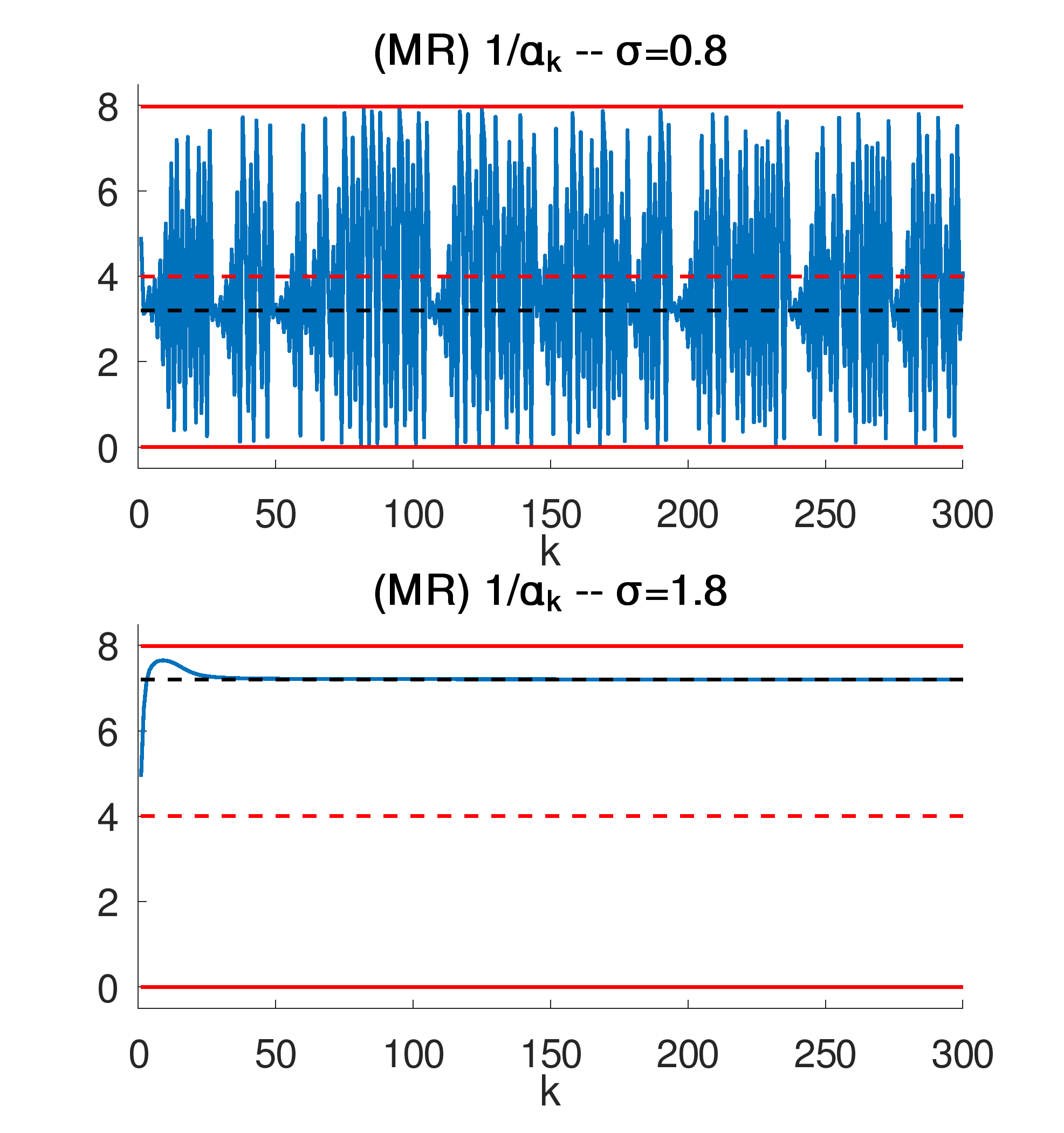}\hfill
  \includegraphics[height=0.48\textwidth]{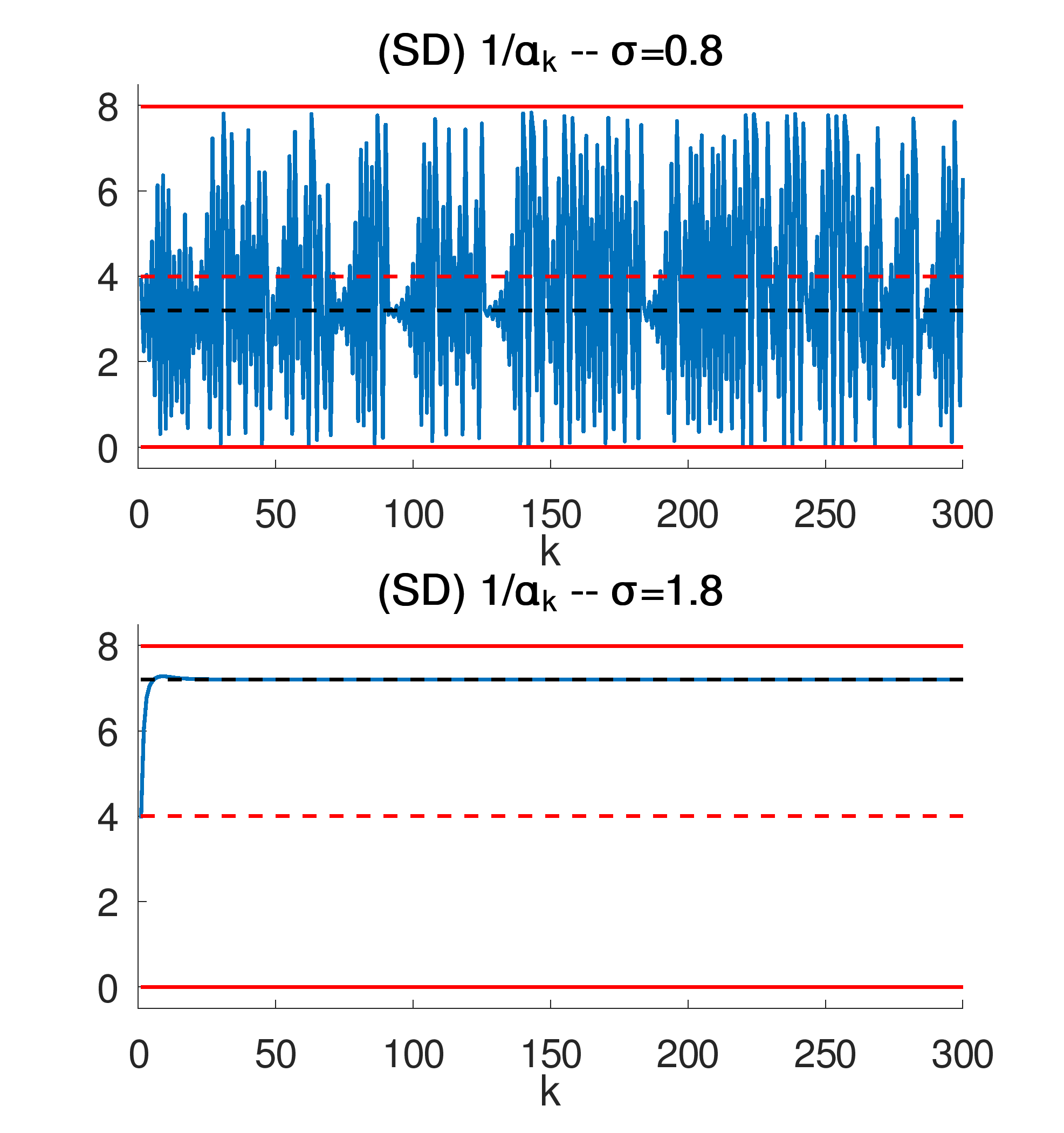}\\
  \caption{Sequence of Rayleigh quotients (${1/\alpha_k }$). The top and
    bottom lines are the largest and smallest eigenvalues. The dashed black line
    is $\sigma(\lambda_1 + \lambda_n)/2$ while the dashed red line is
    $(\lambda_1 + \lambda_n)/2$. }
  \label{fig:RQfig}
\end{figure}

\begin{figure}[h!]
  \centering
  \includegraphics[height=0.45\textwidth]{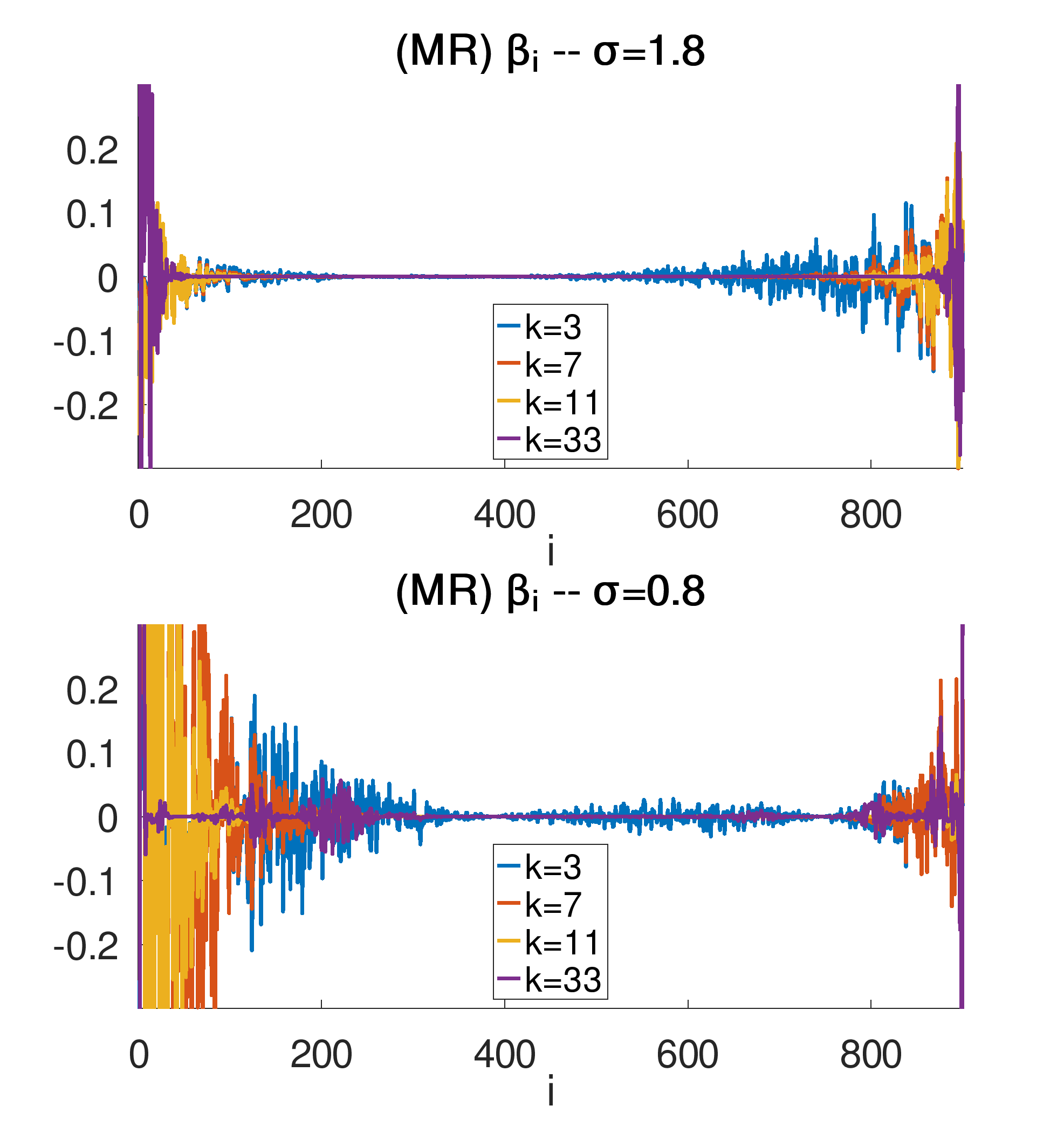}\hfill
  \includegraphics[height=0.45\textwidth]{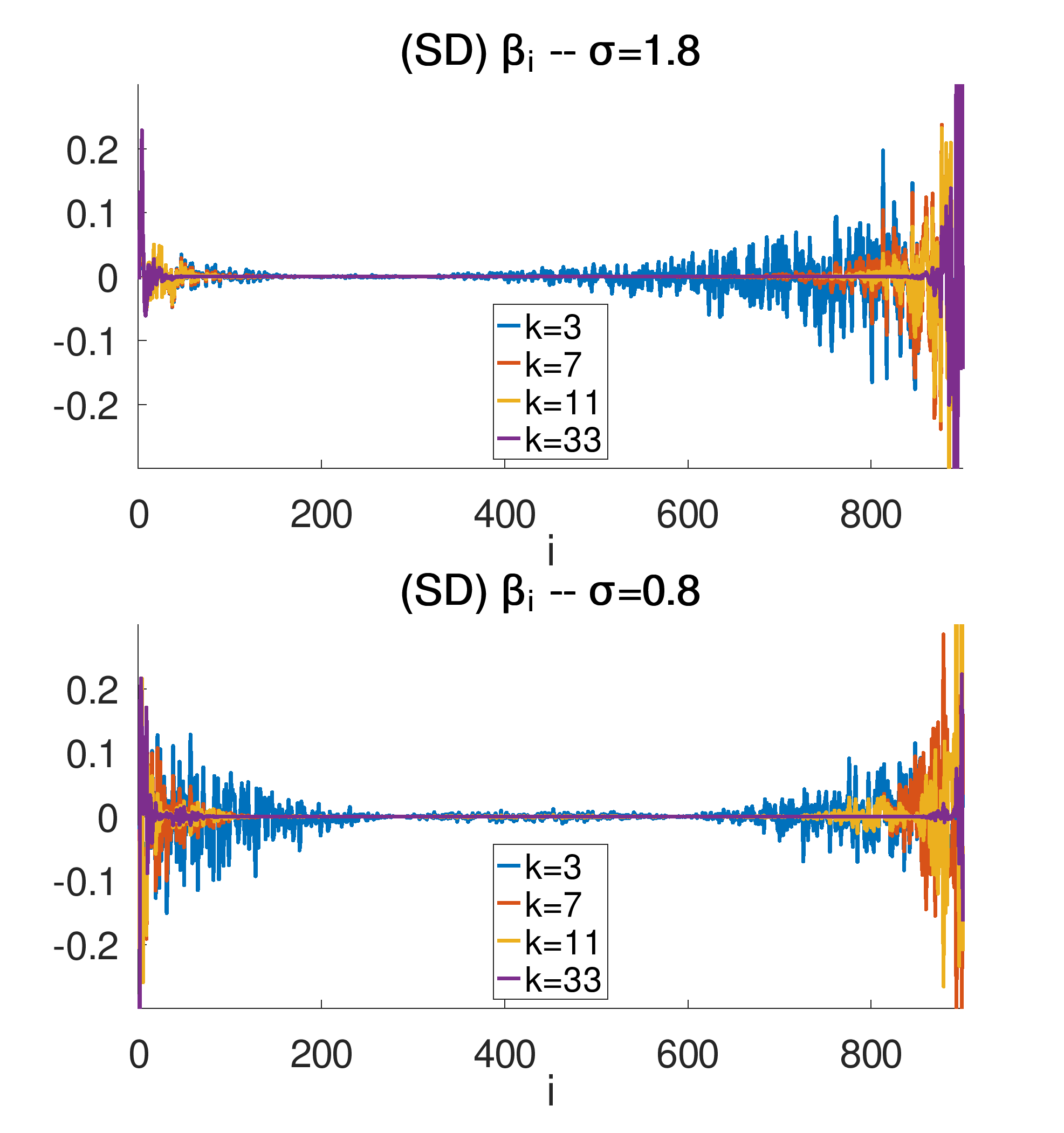}\\
  \caption{Components of the normalized residuals in the eigenbasis for
    iterations $k=3,7,11,33$.} \label{fig:betFig}
\end{figure}

\begin{figure}[p!]
  \includegraphics[width=0.45\linewidth]{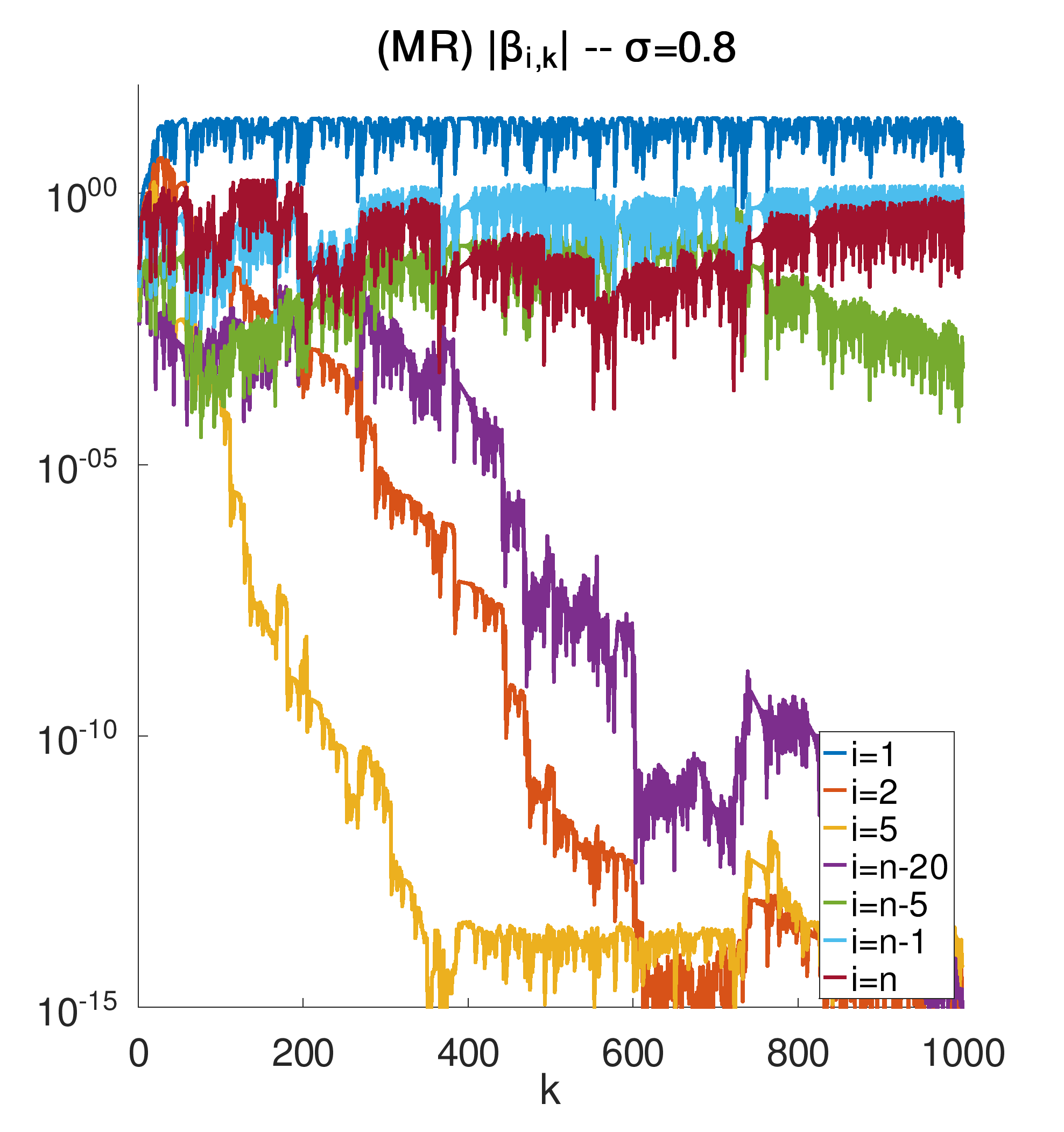}\hfill
  \includegraphics[width=0.45\linewidth]{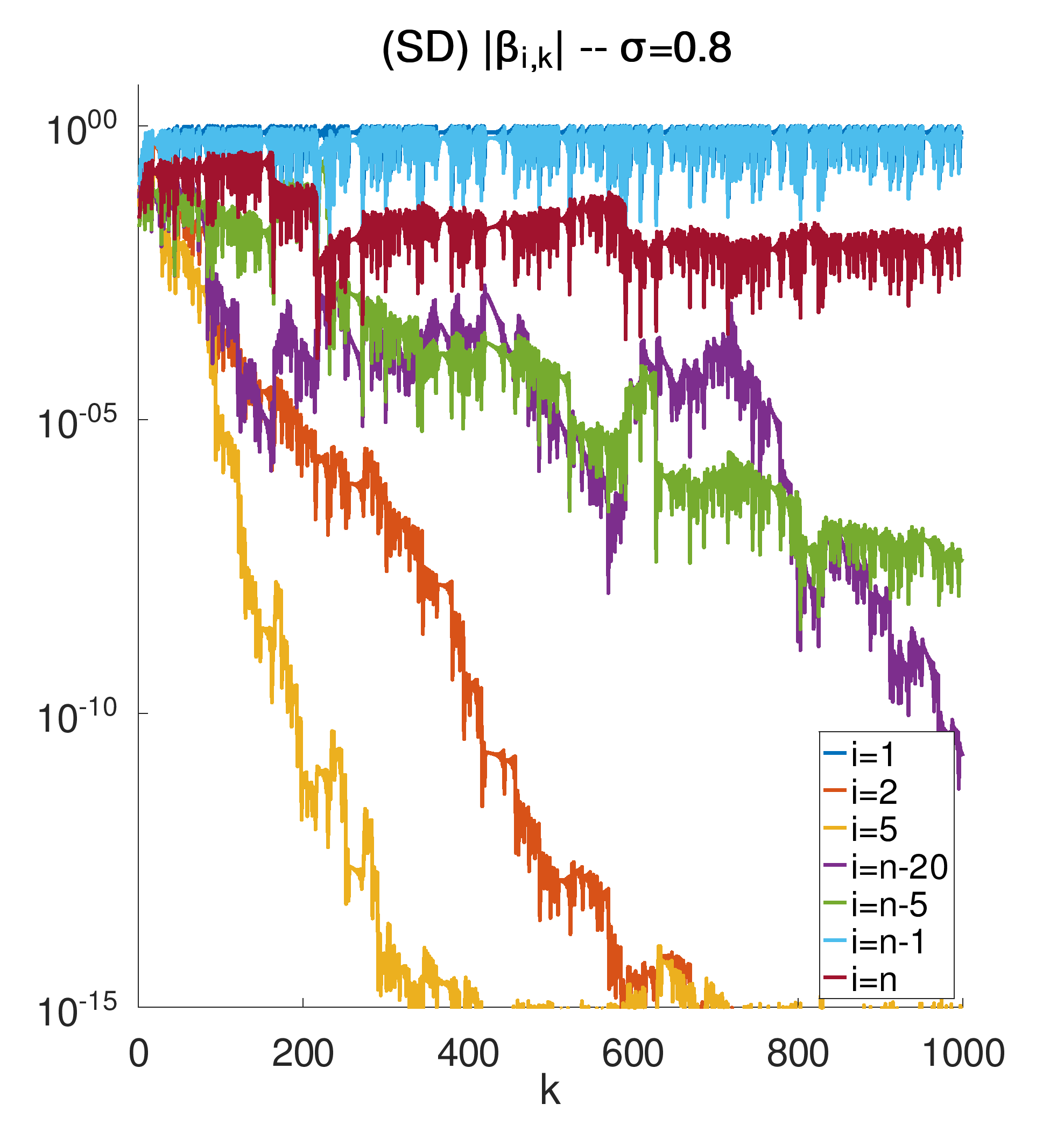}\\
  \includegraphics[width=0.45\linewidth]{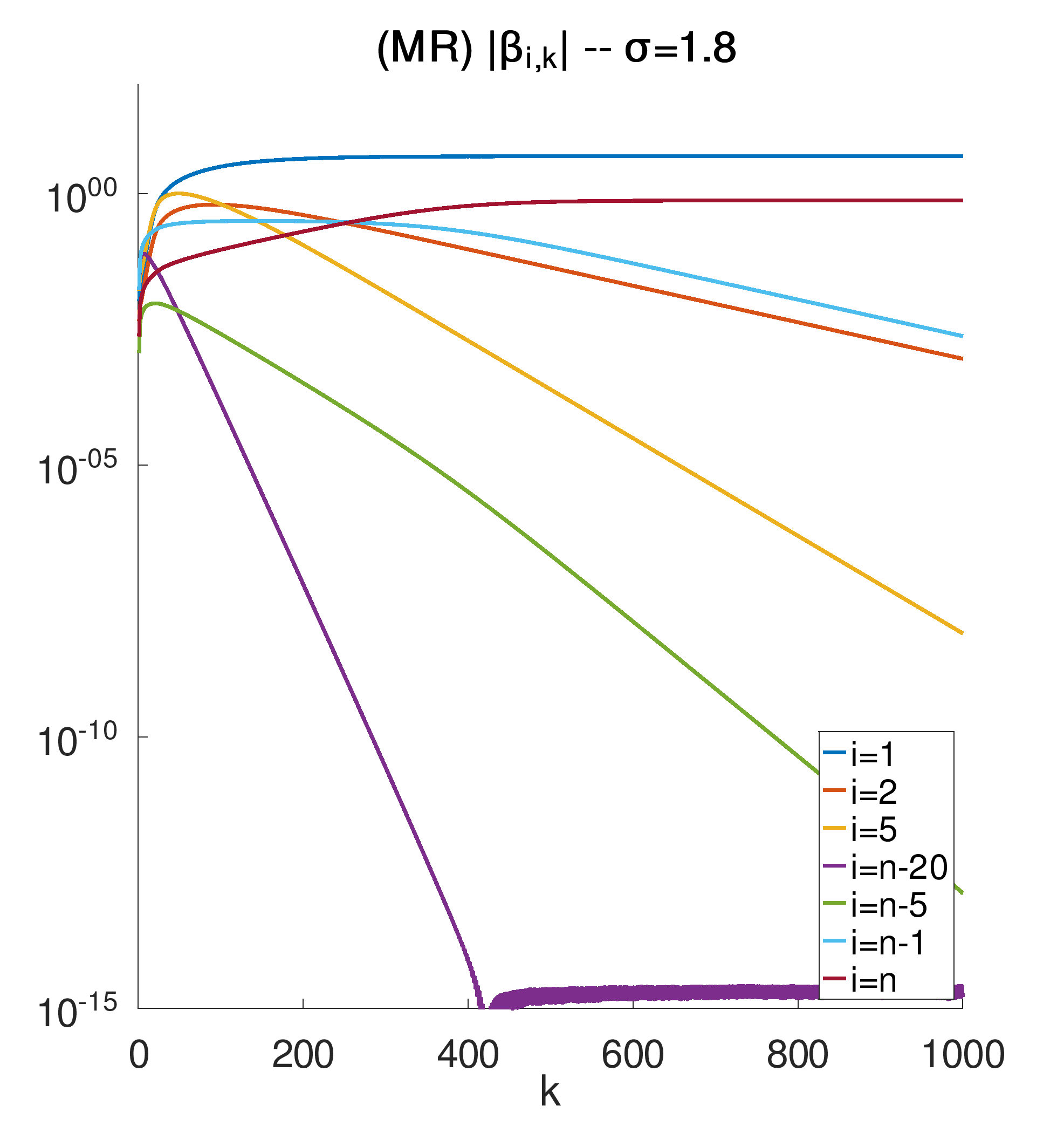}\hfill
  \includegraphics[width=0.45\linewidth]{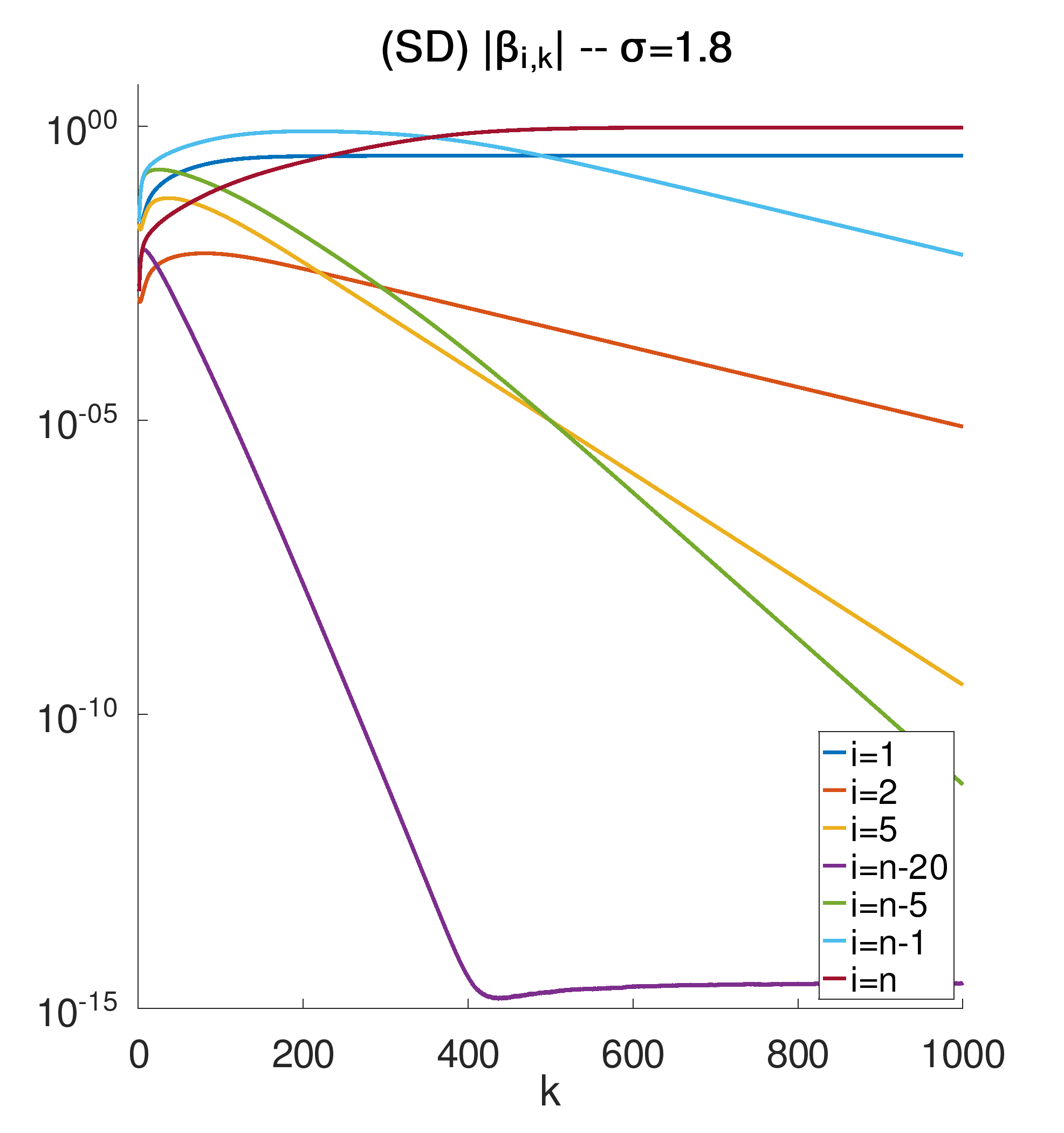}
  \caption{Convergence of $|\beta_{i,k}|$ for different values of $i$ and
    $\sigma=0.8$ (top) or $\sigma=1.8$ (bottom),
    in the case of the MR (left) or SD (right) algorithms. We used
    different random initial guesses. The normalized residuals are supported
    by a few extremal modes (here, mostly a few of the lowest modes and a few of the
    highest modes), and all the intermediate modes
    vanish asymptotically. Only a few modes are plotted for the sake of
    clarity.}
  \label{fig:test_sigma}
\end{figure}

\begin{figure}[h!]
  \includegraphics[width=0.45\linewidth]{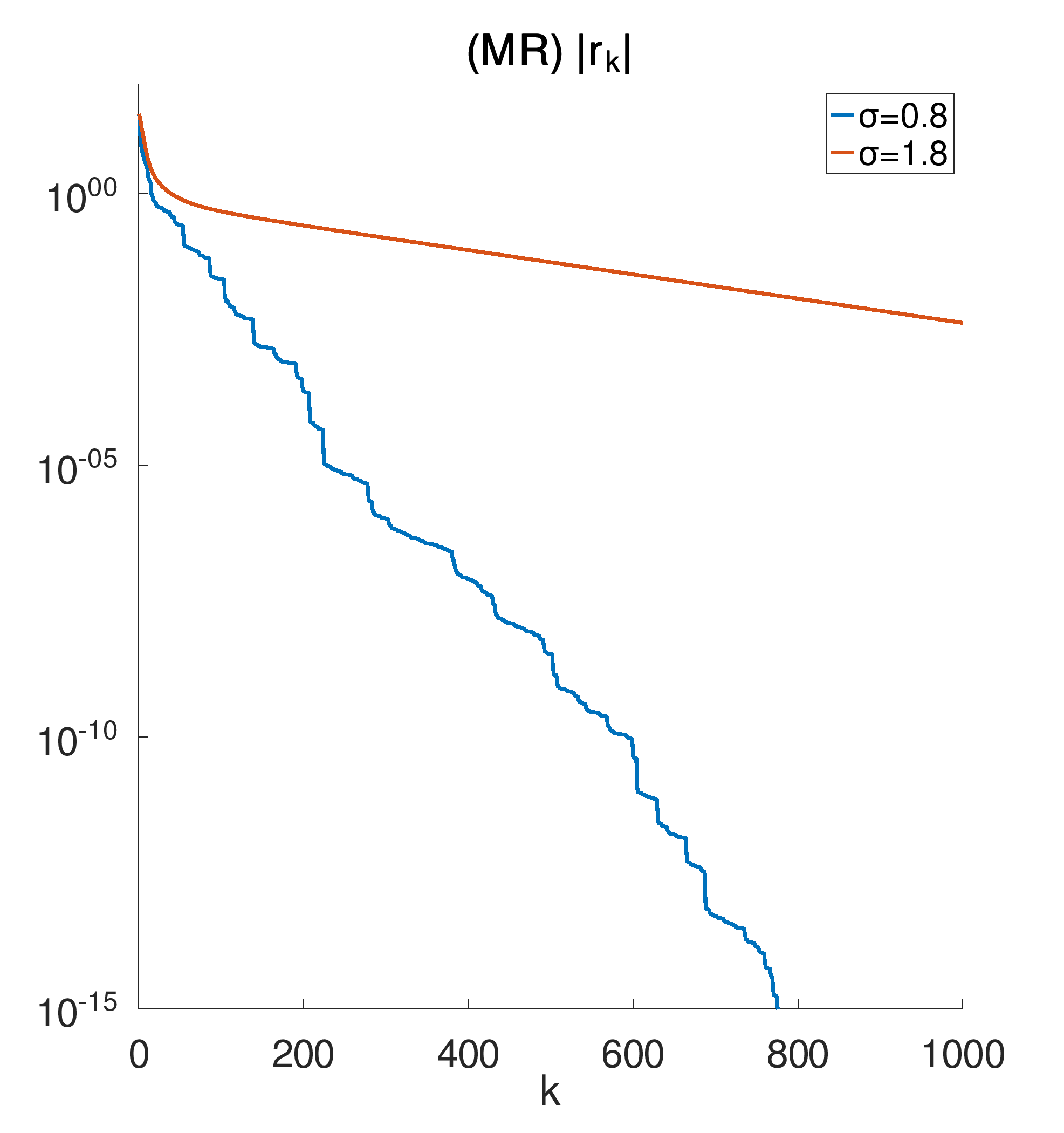}\hfill
  \includegraphics[width=0.45\linewidth]{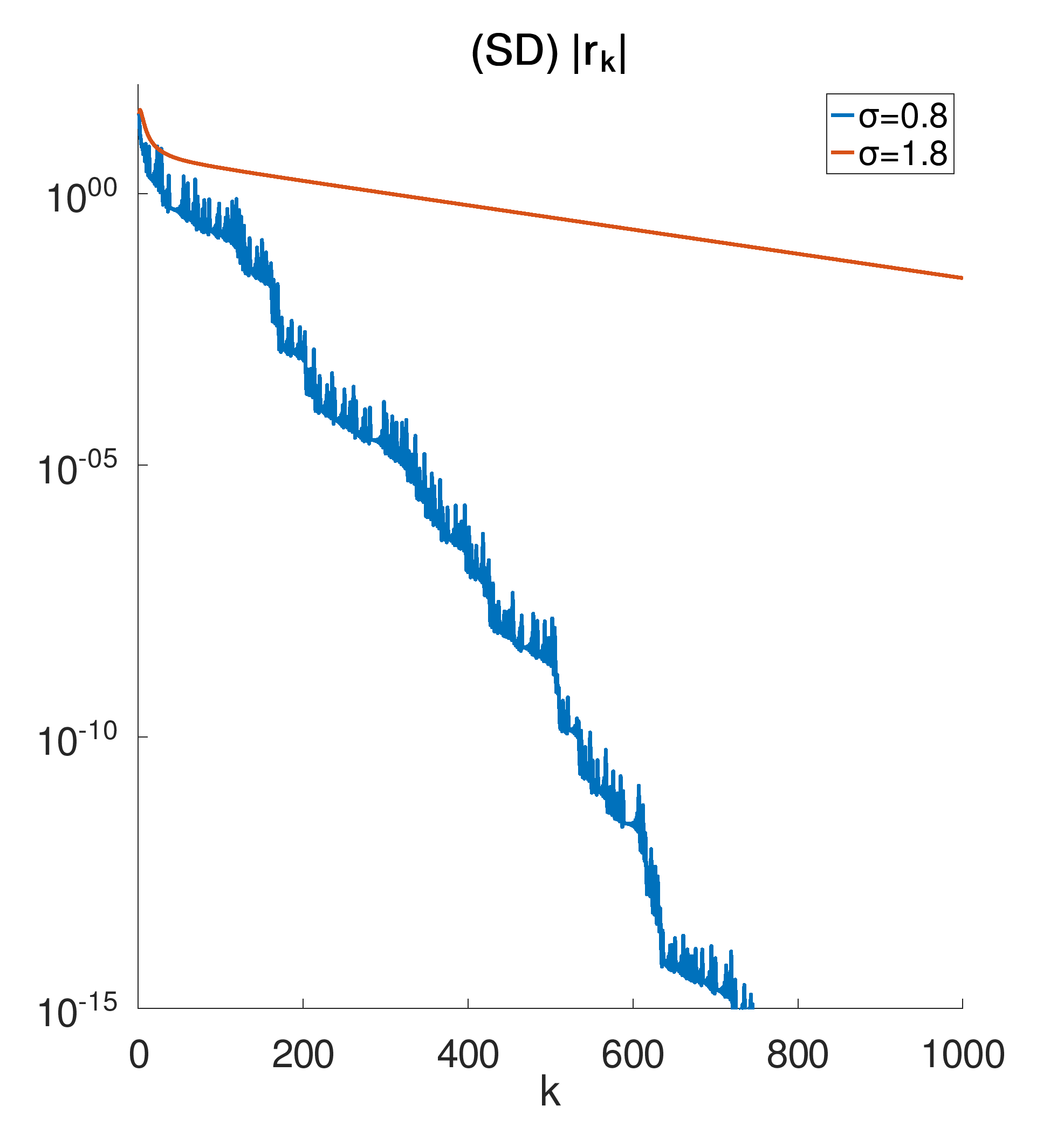}\\
  \caption{Convergence behavior of the residuals for $\sigma=0.8$ or $\sigma=1.8$, for the MR (left) and SD (right) algorithms.}
  \label{fig:test_sigma_res}
\end{figure}

\textbf{Conclusions.} We observed numerically that, in the absence of the zigzag
effect, the residuals of the MR and SD methods are asymptotically supported by a
few extremal eigenmodes of the matrix $A$. This motivates to use this extra
information in order to accelerate the convergence once the residuals are well
approximating eigenvectors, which is the goal of the next section.
Moreover, when resorting to the relaxed scheme,
choosing $\sigma\in(0,1)$ also gives faster convergence, justifying the practice of selecting
such values for $\sigma$. Note that some of the observations we made
in this section can be theoretically justified, see Section~\ref{sec:det_analysis}.

\section{Spectral acceleration techniques}\label{sec:spectral}

\subsection{Eigenvector based acceleration}\label{sec:EigAcc}

\subsubsection{Preliminaries}

Once a search direction $g_k$ becomes a good approximation to an eigenvector of
$A$ at some iteration $k$, then the relaxation factor $\sigma$ should be set to
1 in order to use the exact optimal steplength, since this optimal choice drives
$x_{k+1}$ very close to the exact solution of the minimization problem, as the
following result indicates.
\begin{lemma} \label{eigvgrad}
  If the iterative scheme $x_{k+1}  =  x_k - \sigma \alpha_k g_k$ is used to
  solve  \eqref{quad} with $\sigma\in(0,2)$, $\alpha_k$ obtained via
  \eqref{eq:alpSD} or \eqref{eq:alpMR}, and at some iteration $k\geq 1$ the
  gradient vector $g_{k}=v$, where $Av=\lambda v$, then setting $\sigma=1$ at
  iteration $k$ results in $x_{k+1}= x^*= A^{-1}b$.
\end{lemma}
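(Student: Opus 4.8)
The plan is to show that under the eigenvector hypothesis the optimal steplength collapses to the reciprocal of the corresponding eigenvalue, and that a single unrelaxed step then lands exactly on $x^*=A^{-1}b$. Since the computation is short, the real content is identifying why the two steplength rules coincide on an eigenvector and why the resulting displacement exactly cancels the offset $x_k-x^*$.

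First I would evaluate $\alpha_k$ for both rules under the assumption $g_k=v$ with $Av=\lambda v$ (recall $\lambda>0$ and $v\neq 0$ since $A$ is SPD, so all quotients below are well defined). Substituting into \eqref{eq:alpSD} gives $\alpha_k^\text{SD}=(v^\top v)/(v^\top A v)=(v^\top v)/(\lambda\, v^\top v)=1/\lambda$, using $v^\top A v=\lambda\, v^\top v$. Likewise, substituting into \eqref{eq:alpMR} and using $A^2 v=\lambda^2 v$ gives $\alpha_k^\text{MR}=(v^\top A v)/(v^\top A^2 v)=(\lambda\, v^\top v)/(\lambda^2\, v^\top v)=1/\lambda$. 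Thus, regardless of which rule is used, the optimal steplength is exactly $1/\lambda$; this is the crucial simplification that makes an eigenvector direction so favorable.

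Next I would exploit the defining relation of the gradient for the quadratic \eqref{quad}. Since $g_k=A x_k-b=v$ and $A$ is invertible, we may solve for $x_k$ to obtain $x_k=A^{-1}(b+v)=x^*+A^{-1}v$, and because $A^{-1}v=(1/\lambda)v$ we get $x_k=x^*+(1/\lambda)\,v$. In words, the current iterate already lies on the line through $x^*$ in the direction $v$, at signed distance $1/\lambda$ from the solution.

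Finally, setting $\sigma=1$ in the update $x_{k+1}=x_k-\sigma\alpha_k g_k$ and inserting $\alpha_k=1/\lambda$ and $g_k=v$ yields $x_{k+1}=\bigl(x^*+(1/\lambda)v\bigr)-(1/\lambda)v=x^*$, which is the claim. There is no genuine obstacle here: the only two points worth checking carefully are that the SD and MR steplength formulas agree on an eigenvector and that the step $\alpha_k g_k=(1/\lambda)v$ precisely annihilates the offset $x_k-x^*=(1/\lambda)v$. The lemma thus makes rigorous the heuristic driving the whole section, namely that whenever the gradient aligns with an eigenvector of $A$, a single exact (unrelaxed) step reaches the minimizer.
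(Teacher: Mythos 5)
Your proof is correct and follows essentially the same route as the paper: both compute that the SD and MR steplengths reduce to $1/\lambda$ on an eigenvector and then verify that the unrelaxed step lands on $x^*$. The only cosmetic difference is that you finish by writing $x_k = x^* + (1/\lambda)v$ and cancelling the offset in $x$-space, whereas the paper multiplies the update by $A$ and checks $g_{k+1}=g_k-v=0$ in gradient-space; the two computations are equivalent.
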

\begin{proof}
  From \eqref{eq:alpSD} -- \eqref{eq:alpMR}  and the fact that $Av=\lambda v$,
  we obtain that  $\alpha_{k}=1/\lambda$. Since at that iteration  $\sigma=1$,
  then $x_{k+1}  =  x_{k} - (1/\lambda)g_{k}$. Multiplying by $A$ and
  subtracting $b$ from both sides, we get: $g_{k+1}= g_{k} -v =0$, and hence
  $x_{k+1}= x^*= A^{-1}b$.
\end{proof}

Concerning the tendency to produce gradient directions that approximates an
eigenvector of $A$, from now on we favor the use of the relaxed MR method,
since it has been frequently observed that it tends to produce a next gradient
direction  more inclined to an eigenvector direction than the SD method; see
\cite{Zhou06} for additional details. We now present the linear convergence
result of the relaxed MR method with a fixed parameter $\sigma\in(0,1)$ which is
adapted from Theorem 8 and Lemma 11 in \cite{MacD25} for the specific choice
$\ell=1/2$ and that will be required to justify the global convergence of the
acceleration techniques introduced in this paper.
\begin{theorem} \label{convMR}
  Let $f$ be the strictly convex function  given in  \eqref{quad},   $\sigma \in
  (0, 2)$, and $c(\sigma)$ defined as $$ c(\sigma) = \Big(1- \sigma(2-\sigma)
  \frac{4 \lambda_{\min} \lambda_{\max}}{(\lambda_{\min} + \lambda_{\max})^2}
  \Big)< 1,  $$ where  $\lambda_{\min}$  and   $\lambda_{\max}$ are the smallest
  and the largest eigenvalues of the Hessian matrix $A$, respectively. Given an
  initial point $x_0 \in \R^n$, for $k\geq 0$, let the iterates be generated by
  the MR method $$ x_{k+1}  =  x_k - \sigma \alpha_k^\text{MR} g_k. $$ Then
  \begin{equation} \label{LinRate} \|g_k\|^2  \leq  c(\sigma) \|g_{k-1}\|^2
    \leq\dots \leq  c(\sigma)^k \|g_0\|^2. \end{equation}
\end{theorem}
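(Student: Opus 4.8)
The plan is to derive an exact one-step identity for $\|g_{k+1}\|^2$ and then bound the resulting reduction factor using the Kantorovich inequality. First I would invoke the power-method relation \eqref{eq:PowerM}, which for the MR step reads $g_{k+1} = (I - \sigma\alpha_k^{\text{MR}} A)g_k$. Expanding the squared norm gives
\[
\|g_{k+1}\|^2 = \|g_k\|^2 - 2\sigma\alpha_k^{\text{MR}}\, g_k^\top A g_k + \sigma^2 (\alpha_k^{\text{MR}})^2\, g_k^\top A^2 g_k.
\]
Substituting the explicit MR steplength $\alpha_k^{\text{MR}} = (g_k^\top A g_k)/(g_k^\top A^2 g_k)$ from \eqref{eq:alpMR}, the two correction terms collapse onto the same quantity, producing the exact identity
\[
\|g_{k+1}\|^2 = \|g_k\|^2 - \sigma(2-\sigma)\,\frac{(g_k^\top A g_k)^2}{g_k^\top A^2 g_k}.
\]

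Next I would factor out $\|g_k\|^2 = g_k^\top g_k$ so as to write the reduction factor as $1 - \sigma(2-\sigma)\,\rho_k$, where
\[
\rho_k = \frac{(g_k^\top A g_k)^2}{(g_k^\top A^2 g_k)(g_k^\top g_k)}.
\]
The key step is to bound $\rho_k$ from below, uniformly in $k$. Since $A$ is SPD, the substitution $y = A^{1/2}g_k$ is well defined, and a direct computation gives $g_k^\top A g_k = y^\top y$, $g_k^\top A^2 g_k = y^\top A y$, and $g_k^\top g_k = y^\top A^{-1} y$, whence
\[
\rho_k = \frac{(y^\top y)^2}{(y^\top A y)(y^\top A^{-1} y)}.
\]
This is precisely the ratio controlled by the Kantorovich inequality, which yields $\rho_k \geq 4\lambda_{\min}\lambda_{\max}/(\lambda_{\min}+\lambda_{\max})^2$ for every nonzero $y$, the bound depending only on the extreme eigenvalues of $A$.

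Combining the two identities then gives $\|g_{k+1}\|^2 \leq c(\sigma)\|g_k\|^2$ with $c(\sigma)$ as defined, and a straightforward induction on $k$ (equivalently, telescoping the product of the per-step factors) delivers the full chain \eqref{LinRate}. The bound $c(\sigma)<1$ also follows, since $\sigma\in(0,2)$ forces $\sigma(2-\sigma)\in(0,1]$ while the Kantorovich constant lies in $(0,1]$, so their product is strictly positive. The one place requiring care is the change of variables together with the correct invocation of the Kantorovich inequality: the ratio arising naturally from the MR step involves $A$ and $A^2$, and it is only after the substitution $y = A^{1/2}g_k$ that it assumes the canonical Kantorovich form involving $A$ and $A^{-1}$. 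Everything else is elementary algebra, so this recognition is the main obstacle.
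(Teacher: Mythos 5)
Your proof is correct. Note that the paper itself does not prove this theorem: it is stated as an adaptation of Theorem 8 and Lemma 11 of \cite{MacD25} (with $\ell = 1/2$), so there is no in-paper argument to compare against. Your self-contained derivation is the natural one and checks out at every step: the expansion of $\|(I-\sigma\alpha_k^{\text{MR}}A)g_k\|^2$ does collapse, after substituting $\alpha_k^{\text{MR}} = (g_k^\top A g_k)/(g_k^\top A^2 g_k)$, to the exact identity $\|g_{k+1}\|^2 = \|g_k\|^2 - \sigma(2-\sigma)\,(g_k^\top A g_k)^2/(g_k^\top A^2 g_k)$; the substitution $y = A^{1/2}g_k$ correctly converts the resulting ratio into the canonical Kantorovich form $(y^\top y)^2/\bigl((y^\top A y)(y^\top A^{-1}y)\bigr)$ with the bound expressed in the eigenvalues of $A$; and the telescoping is immediate. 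The only cosmetic caveat is the degenerate case $g_k = 0$, where $\alpha_k^{\text{MR}}$ is undefined but the chain \eqref{LinRate} holds trivially since all subsequent gradients vanish. Your closing observation that $c(\sigma)<1$ because $\sigma(2-\sigma)$ and the Kantorovich constant are both positive is also right, and in fact $c(\sigma)\ge 0$ follows from the same two quantities being at most $1$, so the rate is well defined. This is essentially the classical Kantorovich-based rate argument (familiar from the steepest descent analysis in the $A$-norm) transplanted to MR in the Euclidean norm, which is presumably what underlies the cited result.
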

Notice that the linear convergence result in Theorem \ref{convMR} is based on a
worst-case analysis, and in practice for $\sigma \in (0, 1)$ the relaxed MR
method requires less iterations than the slow linear rate predicted by
\eqref{LinRate}. Notice also that this result, for any fixed $\sigma \in (0,
2)$,  implies that  $\|g_k\|$ goes  to zero when $k\rightarrow \infty$. Since
$f$ is strictly convex, by continuity it  follows that the sequence
$\{x_k\}_{k\in\N}$ converges to $x^*= A^{-1}b$.

\subsubsection{Analysis for approximate eigenvectors}

We now extend the result of Lemma~\ref{eigvgrad} to the more practical situation
when the vector $g_{k} = v$ is only an approximate eigenvector. Consider a step
$k$  at which we apply the iterate \eqref{genscheme} with either
\eqref{eq:alpSD} or  \eqref{eq:alpMR}. Either of these two steplengths is
denoted  by $\alpha_k $ in what follows. If we view  the current gradient
direction $g_k$ as an approximate eigenvector and we recall that $\alpha_k\inv$
is the associated Rayleigh quotient, we can define the associated residual:
\begin{equation}
  \rho_k = \frac{(A - \alpha_k \inv  \ I )g_k}{\| g_k \|} .  \label{eq:EigRes}
\end{equation}

Let us now compute the negative residual for the next iterate of the SD or MR
scheme. Starting with
\[  g_{k+1} = A x_{k+1} - b = A (x_k - \alpha_k g_k) - b= g_k - \alpha_k A g_k, \]
we see that
\begin{equation*}
  g_{k+1} = \alpha_k [ \alpha_k\inv g_k - A g_k ] = - \alpha_k \| g_k \| \  \rho_k
\end{equation*}
We have therefore proved the following result, which is actually valid for any gradient-type method.
\begin{proposition}\label{prop:eig}
  Assume that the iteration \eqref{genscheme} is applied at a certain step $k$
  with either steplength \eqref{eq:alpSD} or \eqref{eq:alpMR} and let $ \rho_k$
  be the residual of the vector $g_k$ considered as an approximate eigenvector
  of $A$, as given in \eqref{eq:EigRes}. Then the gradient of the next iterate
  satisfies the \emph{equality}: \begin{equation} \| g_{k+1} \| = |\alpha_k |
    \times \|\rho_k \| \times \| g_k \| \label{eq:Nrmglp1} , \end{equation} in
  which $\alpha_k$ stands for either $\alpha_k\up{SD} $ or  $\alpha_k\up{MR} $.
\end{proposition}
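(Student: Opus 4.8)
The plan is to obtain \eqref{eq:Nrmglp1} by a single direct computation of the next gradient $g_{k+1}$, identifying along the way the residual $\rho_k$ defined in \eqref{eq:EigRes}; the equality then follows immediately by taking norms. No approximation or smallness of $\rho_k$ is needed, so the whole argument is exact.

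First I would express $g_{k+1}$ through the gradient of the quadratic \eqref{quad}. Writing $g_{k+1} = Ax_{k+1} - b$ and substituting the update $x_{k+1} = x_k - \alpha_k g_k$ from \eqref{genscheme}, together with $g_k = Ax_k - b$, gives $g_{k+1} = g_k - \alpha_k A g_k$. Next I would factor out $\alpha_k$ and rewrite this as $g_{k+1} = -\alpha_k\bigl(A - \alpha_k^{-1} I\bigr) g_k$. At this point I would recognize, from the definition \eqref{eq:EigRes}, that $(A - \alpha_k^{-1} I) g_k = \|g_k\|\,\rho_k$, and substitute to obtain the compact identity $g_{k+1} = -\alpha_k \|g_k\|\,\rho_k$.

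Finally, taking the Euclidean norm of both sides of this identity and pulling the scalars $\alpha_k$ and $\|g_k\|$ outside the norm yields $\|g_{k+1}\| = |\alpha_k|\,\|\rho_k\|\,\|g_k\|$, which is exactly \eqref{eq:Nrmglp1}.

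There is no genuine obstacle here: the derivation uses only the definition of the gradient of \eqref{quad} and the one rearrangement that introduces $\rho_k$. The point worth emphasizing is that the argument never invokes the specific formula for $\alpha_k$; it uses only that $\alpha_k^{-1}$ is the Rayleigh quotient appearing in \eqref{eq:EigRes}. Hence the statement holds verbatim whether $\alpha_k$ is $\alpha_k^\text{SD}$ from \eqref{eq:alpSD} or $\alpha_k^\text{MR}$ from \eqref{eq:alpMR}, and it is an exact equality rather than an approximation — the interpretation of $g_k$ as an \emph{approximate} eigenvector enters only afterwards, through the observation that $\|\rho_k\|$ is small precisely when $g_k$ is close to an eigenvector of $A$.
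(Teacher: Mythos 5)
Your proposal is correct and follows essentially the same route as the paper: the paper likewise computes $g_{k+1} = g_k - \alpha_k A g_k = \alpha_k[\alpha_k^{-1} g_k - A g_k] = -\alpha_k \|g_k\|\,\rho_k$ and then takes norms. Your closing remark that the specific formula for $\alpha_k$ is never used is a fair observation consistent with the paper's presentation.
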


Note that we recover the case of Lemma~\ref{eigvgrad} when $g_k$ is an exact
eigenvector since in this situation $\rho_k=0$ and therefore we clearly
have $g_{k+1} = 0$.  In the general situation and when $\alpha_k$ is not
large the next gradient norm is of the order of the \emph{product of the
  previous gradient norm and the norm of the residual norm of the same gradient
  considered as an approximate eigenvector.} Note that the eigenvector residual
norm uses a different Rayleigh quotient for each of the methods SD or MR.

Next we examine in more details the gain that can be made in one acceleration step
under the above conditions. Our specific goal is to show that if the current
residual $g_k$ is a good approximate eigenvector of $A$, then $\sigma = 1$
should result in a good reduction in the cost function, i.e., the standard
Euclidean norm for MR and the $A$-norm of the error (or $A\inv $-norm of the
residual) for SD. Since $\sigma = 1$, we have
\eq{eq:Step} g_{k+1} = g_k - \alpha_k A g_k. \en As the next Lemma states, the
sine of the angle\footnote{Recall that for an SPD matrix $B$ $\| x \|_{B}^2 = (B
  x,x)$ and that $\cos_{B} \angle (x,y) = (Bx,y)/(\|x\|_B \|y\|_B)$.} between
$g_k$ and $Ag_k$ -- as measured by the appropriate inner product -- provides
a measure of the progress made in one projection step.

\begin{lemma}\label{lem:gain}
  From one step to the next the residual norms in MR are related as follows when
  $\sigma=1$: \eq{eq:MRgain}
  \frac{\| g_{k+1}\|^2}{\|g_k\|^2 } = \sin^2 \angle (g_k,Ag_k)
  \en
  From one step to the next the $A\inv$ residual norms in SD are related as
  follows when $\sigma=1$:
  \eq{eq:SDgain}
  \frac{\| g_{k+1}\|_{A\inv}^2}{\|g_k\|_{A\inv}^2 }
  = \sin^2 \angle_{A\inv}  (g_k,Ag_k)
  \en
\end{lemma}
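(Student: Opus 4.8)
The plan is to derive both identities from a single projection fact, by recognizing that with $\sigma=1$ each method selects the step $\alpha_k$ that minimizes an appropriate squared norm of $g_{k+1}=g_k-\alpha_k Ag_k$ (cf.\ \eqref{eq:Step}). Writing $(u,v)_B=u^\top Bv$ and $\|u\|_B^2=(u,u)_B$ for an SPD matrix $B$, I would first check that $\alpha_k\up{MR}$ minimizes $\alpha\mapsto\|g_k-\alpha Ag_k\|_B^2$ for $B=I$, and that $\alpha_k\up{SD}$ minimizes the same quantity for $B=A\inv$. The MR case is immediate from \eqref{eq:alpMR}. For SD the only point requiring care is the translation of the $A\inv$-inner product into ordinary quantities: since $(Ag_k,Ag_k)_{A\inv}=g_k^\top Ag_k$ and $(g_k,Ag_k)_{A\inv}=g_k^\top g_k$, the minimizer equals $g_k^\top g_k/(g_k^\top Ag_k)=\alpha_k\up{SD}$, matching \eqref{eq:alpSD}.

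Once $\alpha_k$ is identified as this one-dimensional least-squares step, the first-order optimality condition states exactly that $g_{k+1}=g_k-\alpha_k Ag_k$ is $B$-orthogonal to $Ag_k$; equivalently, $g_{k+1}$ is the residual of the $B$-orthogonal projection of $g_k$ onto $\mathrm{span}\{Ag_k\}$. (For $B=I$ this is the familiar MR relation $g_{k+1}\perp Ag_k$, and for $B=A\inv$ it is the SD relation $g_{k+1}^\top g_k=0$.) From the $B$-Pythagorean identity applied to $g_k=g_{k+1}+\alpha_k Ag_k$ I then obtain $\|g_{k+1}\|_B^2=\|g_k\|_B^2-(g_k,Ag_k)_B^2/\|Ag_k\|_B^2$, and dividing by $\|g_k\|_B^2$ leaves $1-\cos_B^2\angle(g_k,Ag_k)=\sin_B^2\angle(g_k,Ag_k)$, using the definition of the $B$-angle recalled in the footnote.

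It then remains only to specialize: $B=I$ gives \eqref{eq:MRgain} and $B=A\inv$ gives \eqref{eq:SDgain}. I expect the main (though minor) obstacle to be the bookkeeping in the SD case, namely keeping the $A\inv$-weighted inner products consistent so that the optimal step genuinely reproduces $\alpha_k\up{SD}$ and the angle that appears is genuinely $\angle_{A\inv}$. If one prefers to avoid the unified framework, the identities can equally be obtained by a direct expansion of $\|g_{k+1}\|^2$ and $\|g_{k+1}\|_{A\inv}^2$ with $\alpha_k\up{MR}$ and $\alpha_k\up{SD}$ substituted in; in each case two of the three terms combine so that the result collapses to $\|g_k\|_B^2-(g_k,Ag_k)_B^2/\|Ag_k\|_B^2$, from which the sine-squared form follows as above.
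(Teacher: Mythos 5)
Your proposal is correct and follows essentially the same route as the paper: the paper's proof simply cites the well-known one-dimensional projection analysis of MR (\cite[Section 5.3.2]{Saad-book2}) and notes that the SD case follows by replacing the Euclidean inner product with the $A\inv$-inner product, which is precisely the unified $B$-inner-product argument you spell out. Your version just supplies the details (optimality of $\alpha_k$, $B$-orthogonality of $g_{k+1}$ to $Ag_k$, and the $B$-Pythagorean identity) that the paper leaves to the reference, and these details are all accurate.
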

\begin{proof}
  Inequality \eqref{eq:MRgain} is well-known when studying the convergence of
  the MR method -- see for example, \cite[Section 5.3.2]{Saad-book2}. The same
  analysis goes through to the SD case if we replace the standard inner product
  by the $A\inv$-inner product and norm.
\end{proof}

The well-known formula \eqref{eq:MRgain} thus expresses the progress made in one
step in terms of the angle between $g_k$ and $Ag_k$.  Notice now that if $g_k$
is close to an eigenvector then the angle between $g_k$ and $Ag_k$ is small
and this results in good progress from the current step to the next. We now
express the fact that $g_k$ is an approximate eigenvector by stipulating that $
Ag_k = \mu_k g_k + w_k $ where $w_k$ is a vector of small length (relative to
$g_k$), that is orthogonal to $g_k$, and $\mu_k$ is the associated approximate
eigenvalue.

\begin{proposition}\label{prop:gain}
  Let    $g_k$ be an approximate eigenvector such that
  \eq{eq:revec}
  Ag_k = \mu_k g_k + w_k
  \en
  where $w_k$ is orthogonal to $g_k$ for MR, and $A\inv$-orthogonal to $g_k$ for
  SD. If we assume that
  \eq{eq:epsDef}
  \text{MR:} \quad
  \frac{\| w_k \|}{\| g_k \|} \le \epsilon , \qquad
  \text{SD:} \quad
  \frac{\| w_k \|_{A\inv}}{\| g_k \|_{A\inv} } \le \epsilon
  \en
  then the pair of vectors  $g_k$ and $Ag_k$  satisfy:
  \eq{eq:sinTheta}
  \text{MR:} \quad \sin \angle (g_k,Ag_k) \le \frac{ \epsilon  }  { \sqrt{ \mu_k^2 + \epsilon^2 } } ,\qquad
  \text{SD:} \quad \sin \angle_{A\inv} (g_k,Ag_k) \le \frac{ \epsilon  }  { \sqrt{ \mu_k^2 + \epsilon^2 } } \  .
  \en
\end{proposition}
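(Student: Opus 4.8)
The plan is to convert the decomposition \eqref{eq:revec} into an \emph{exact} expression for the sine of the angle and then bound that expression using the monotonicity of the resulting one-variable function. I would handle the MR case first (Euclidean inner product) and then observe that the SD case follows verbatim once the inner product is replaced by the $A\inv$-inner product, since the argument uses nothing beyond bilinearity, the orthogonality of $w_k$ to $g_k$, and the Pythagorean theorem, all of which hold in any inner product.

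For the MR case, the hypothesis that $w_k$ is orthogonal to $g_k$ means that $Ag_k = \mu_k g_k + w_k$ is precisely the orthogonal decomposition of $Ag_k$ into its component along $g_k$ and its component perpendicular to $g_k$. Taking inner products gives $\langle g_k, Ag_k\rangle = \mu_k \norm{g_k}^2$ and, by the Pythagorean theorem, $\norm{Ag_k}^2 = \mu_k^2 \norm{g_k}^2 + \norm{w_k}^2$. Hence $\cos \angle(g_k, Ag_k) = \mu_k \norm{g_k}/\norm{Ag_k}$, so that
\[
  \sin^2 \angle(g_k, Ag_k) = 1 - \frac{\mu_k^2 \norm{g_k}^2}{\norm{Ag_k}^2}
  = \frac{\norm{w_k}^2}{\mu_k^2 \norm{g_k}^2 + \norm{w_k}^2}.
\]
Dividing numerator and denominator by $\norm{g_k}^2$ and setting $r = \norm{w_k}/\norm{g_k}$, this equals exactly $r^2/(\mu_k^2 + r^2)$. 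Note that only $\mu_k^2$ enters, so the sign of $\mu_k$ is irrelevant, which is convenient since $\mu_k$ is merely an approximate eigenvalue.

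The remaining, and only genuinely nontrivial, step is to pass from this equality to the stated \emph{inequality} under the hypothesis $r \le \epsilon$ coming from \eqref{eq:epsDef}. For this I would verify that the map $r \mapsto r/\sqrt{\mu_k^2 + r^2}$ is nondecreasing on $[0,\infty)$: a one-line derivative computation produces a numerator proportional to $\mu_k^2 \ge 0$, which settles the monotonicity. Evaluating the bound at $r = \epsilon$ then yields $\sin \angle(g_k, Ag_k) \le \epsilon/\sqrt{\mu_k^2 + \epsilon^2}$, which is the first inequality in \eqref{eq:sinTheta}. For SD, repeating the same three computations (inner product, Pythagorean identity, monotonicity) with $\langle \cdot,\cdot\rangle_{A\inv}$ in place of the Euclidean inner product — using the $A\inv$-orthogonality of $w_k$ and the hypothesis on $\norm{w_k}_{A\inv}/\norm{g_k}_{A\inv}$ — delivers the second inequality with no extra effort. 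The main (mild) obstacle is therefore just the monotonicity check; everything else is forced by the orthogonality built into \eqref{eq:revec}.
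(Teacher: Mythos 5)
Your proposal is correct and follows essentially the same route as the paper: both use the orthogonality in \eqref{eq:revec} to get $(Ag_k,g_k)=\mu_k\|g_k\|^2$ and $\|Ag_k\|^2=\mu_k^2\|g_k\|^2+\|w_k\|^2$, leading to $\sin^2\angle(g_k,Ag_k)=r^2/(\mu_k^2+r^2)$ with $r=\|w_k\|/\|g_k\|$, and then bound by $\epsilon$ (the paper bounds $\cos\theta$ from below first, which uses the same monotonicity you verify explicitly; it likewise treats SD by swapping in the $A\inv$-inner product). No gap.
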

\begin{proof}
  This proof is for the MR algorithm. An  identical proof for the SD method
  uses $A\inv$ inner products and norms\footnote{Note that for SD we also assume
    that the relation \eqref{eq:revec} holds but that $w_k$ is
    $A\inv$-orthogonal to $g_k$ so that $(Ag_k,g_k)_{A\inv} = \mu_k
    (g_k,g_k)_{A\inv}$. This means that in this case, $\mu_k =
    (Ag_k,g_k)_{A\inv} / (g_k,g_k)_{A\inv} = (g_k,g_k)  /(A\inv g_k,g_k)$.}. From
  the assumptions  $(Ag_k,g_k) = \mu_k (g_k,g_k)$ and so $\mu_k$ is equal to the
  standard Rayleigh quotient of $g_k$ with respect to $A$. Also $ \|Ag_k \|^2 =
  \| \mu_k g_k + w_k \|^2 =\mu_k^2 \|g_k\|^2 + \|w_k\|^2$. In the end, with the
  assumption \eqref{eq:epsDef}  the angle $\theta \equiv \angle (g_k,Ag_k)$ is
  such that
  \begin{align*}
    \cos \theta &=
    \frac{(Ag_k,g_k) } {\| Ag_k \| \| g_k\|}
    =  \frac{ \mu_k  \|g_k\|^2 } {\|g_k \| \sqrt{ \mu_k^2 \| g_k \|^2 + \| w_k \|^2} }
    =  \frac{ \mu_k  }  { \sqrt{ \mu_k^2 + (\|w_k\|/\|g_k\|)^2 } }  \ge
    \frac{\mu_k}{\sqrt{ \mu_k^2 +\epsilon^2 } } \\
  \end{align*}
  from which we deduce
  \begin{align*}
    \sin^2 \theta & \le 1-
    \frac{ \mu_k^2  }  {\mu_k^2 + \epsilon^2 } =  \frac{ \epsilon^2 }  {\mu_k^2 +
      \epsilon^2 }.
  \end{align*}
  Inequality \eqref{eq:sinTheta} then follows.
\end{proof}

For example  a reduction by a factor of at least 2 in residual norm is
obtained if $\epsilon \le \mu_k / \sqrt{3}  $. All these results motivates a
specific treatment of the iterations for which we can detect that the residual
is close to an eigenvector, in order to accelerate the convergence of the method
towards the minimizer. This is the goal of what follows.

\subsubsection{Practical application}

We now discuss how to detect, without increasing the number of matrix-vector
products, that $g_k$ approaches an eigenvector of $A$. According to
\eqref{eq:EigRes}, the closeness can be measured by monitoring\footnote{Note
  that the matrix-vector product $A g_k$ that appears in \eqref{eq:eigRes} is
  also required in the MR method and therefore the test does not incur
  significant additional computation.} at every iteration the normalized
eigenvector-residual
\begin{equation}
  \frac{\alpha_k}{\|g_k\|} \Big\|A g_k - \alpha_k^{-1}g_k \Big\|
  = |\alpha_k|\norm{\rho_k}.
  \label{eq:eigRes}
\end{equation}
If this eigenvector-residual is close to zero, then clearly $g_k$ is close to an
eigenvector. Moreover, if $|\alpha_k|\norm{\rho_k} < \epsilon_{\rm eig}$ then,
by Proposition~\ref{prop:eig}, this ensures that $\norm{g_{k+1}} < \epsilon_{\rm
  eig}\norm{g_k}$.  Taking this remark into account, we present in
Algorithm~\ref{alg:EigAcc} our first acceleration scheme, named the `eigenvector
acceleration scheme'.

\begin{algorithm}[ht]
  \centering
  \caption{Eigenvector Acceleration Scheme}\label{alg:EigAcc}
  \begin{algorithmic}[1]
    \State Start: a given initial guess $x_0$; set $r_0=b-Ax_0$ and $p_0=Ar_0$;
    \State Choose: $\sigma$ such that  $0 < \sigma < 1$, and $0<\epsilon_{\rm
      eig} < 1$
    \While{(not converged)}
    \State $\ds\alpha_k = \frac{p_k^\top  r_k}{p_k^\top  p_k}$
    \If {$\frac{\alpha_k}{\|r_k\|}\Big\|p_k - \alpha_k^{-1}r_k\Big\| <
      \epsilon_{\rm eig}$}
    \Comment{Test whether $r_k = -g_k$ is aligned with an eigendirection\\{\hspace{7.5cm} (recall that $p_k = Ar_k$).}}
    \State $\tau_k = 1$ \Comment{If yes, the steplength $\alpha_k$ is optimal.}
    \Else
    \State $\tau_k = \sigma$ \Comment{If not, use steplength $\sigma\alpha_k$ to break the zigzag
      effect.}
    \EndIf
    \State $x_{k+1} = x_k + \tau_k\alpha_k r_k$
    \State $r_{k+1} = r_k - \tau_k\alpha_k p_k$
    \State $p_{k+1} = Ar_{k+1}$
    \EndWhile
  \end{algorithmic}
\end{algorithm}

Our next result,  obtained as a direct consequence of Theorem \ref{convMR},
establishes the convergence of the eigenvector acceleration algorithm.
\begin{corollary} \label{convEigAcc}
  Let the conditions of Theorem \ref{convMR} hold. Then the sequence
  $\{x_k\}_{k\in\N}$ generated by the eigenvector acceleration algorithm
  converges to $x^*= A^{-1}b$.
\end{corollary}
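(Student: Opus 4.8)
The plan is to recognize Algorithm~\ref{alg:EigAcc} as a relaxed MR iteration in which the relaxation parameter is allowed to vary from step to step, taking at iteration $k$ the value $\tau_k\in\{\sigma,1\}$ according to the eigenvector test. Since both $\sigma\in(0,1)$ and $1$ lie in $(0,2)$, every step falls within the range covered by Theorem~\ref{convMR}. First I would observe that the estimate of Theorem~\ref{convMR} is, at its core, a \emph{single-step} contraction: for any current gradient $g_k$ and any relaxation value $\tau\in(0,2)$, one MR step with parameter $\tau$ produces $g_{k+1}=(I-\tau\alpha_k^{\text{MR}}A)g_k$ satisfying $\norm{g_{k+1}}^2\le c(\tau)\norm{g_k}^2$, with $c(\tau)<1$ depending only on $\tau$ and the spectrum of $A$, and not on the past history. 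Applying this at each iteration with its own $\tau_k$ yields $\norm{g_{k+1}}^2\le c(\tau_k)\norm{g_k}^2$.

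The key step is then to bound $c(\tau_k)$ uniformly over the two admissible values. Writing $c(\tau)=1-\tau(2-\tau)\,\gamma$ with $\gamma=4\lambda_{\min}\lambda_{\max}/(\lambda_{\min}+\lambda_{\max})^2>0$, the factor $\tau(2-\tau)$ is strictly positive on $(0,2)$ and attains its maximum at $\tau=1$. Hence $c(1)\le c(\sigma)<1$, so $\max\{c(\sigma),c(1)\}=c(\sigma)$ and, irrespective of the branch taken by the test,
\begin{equation*}
  \norm{g_{k+1}}^2\le c(\tau_k)\,\norm{g_k}^2\le c(\sigma)\,\norm{g_k}^2 .
\end{equation*}
Telescoping gives $\norm{g_k}^2\le c(\sigma)^k\norm{g_0}^2$, and since $c(\sigma)<1$ we conclude $\norm{g_k}\to 0$.

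Finally, because $A$ is SPD and $g_k=Ax_k-b=A(x_k-x^*)$, we have $x_k-x^*=A^{-1}g_k$, so $\norm{x_k-x^*}\le\norm{A^{-1}}\,\norm{g_k}\to 0$, which proves convergence to $x^*=A^{-1}b$. (Note $r_k=-g_k$ in the algorithm, so $\norm{r_k}=\norm{g_k}$ and the gradient and residual sequences converge together.)

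I expect the only genuine subtlety to be the mismatch between the fixed-$\sigma$ formulation of Theorem~\ref{convMR} and the step-dependent relaxation $\tau_k$ actually used by the algorithm. The main obstacle is therefore to argue cleanly that the theorem's rate is a per-step estimate valid for whatever relaxation value is used at that step, and to verify the monotonicity claim $c(1)\le c(\sigma)$ that makes $c(\sigma)$ a uniform contraction factor; both are elementary, which is precisely why the result is stated as a direct corollary.
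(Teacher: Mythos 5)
Your proof is correct, and it reaches the conclusion by a route that differs from the paper's in how the two branches of the algorithm are combined. The paper splits $\N$ into the subsequence $K_1$ of iterations using $\tau_k=\sigma$ and its complement $K_2$ using $\tau_k=1$; it then argues that $\{\|g_k\|\}_{k\in\N}$ is monotonically decreasing at every step, invokes Theorem~\ref{convMR} only along $K_1$ to force the subsequence $\{\|g_k\|\}_{k\in K_1}$ to zero, and uses monotonicity to drag the whole sequence down with it. You instead observe that the estimate \eqref{LinRate} is a per-step contraction $\|g_{k+1}\|^2\le c(\tau_k)\|g_k\|^2$ valid for whichever relaxation value is used at that step, and that $c(\tau)=1-\tau(2-\tau)\gamma$ is maximized over $\{\sigma,1\}$ at $\tau=\sigma$ because $\tau(2-\tau)$ peaks at $\tau=1$; this gives the uniform bound $\|g_k\|^2\le c(\sigma)^k\|g_0\|^2$ for the entire sequence. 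Your version buys a clean geometric rate for all iterates and, more importantly, is insensitive to whether $K_1$ is finite or infinite — the paper's subsequence argument as written tacitly needs $K_1$ to be infinite (if the eigenvector test were eventually always triggered, one would have to separately invoke the theorem with $\sigma=1$ on $K_2$, which is exactly what your uniform bound does automatically). What the paper's formulation buys in exchange is that it only needs the qualitative monotone decrease $\|g_{k+1}\|<\|g_k\|$ on the $\tau_k=1$ steps rather than the quantitative bound $c(1)\le c(\sigma)$, and its subsequence-plus-monotonicity template carries over verbatim to Corollary~\ref{convLanczAcc}, where the accelerated steps are no longer MR steps. Both arguments rest on reading Theorem~\ref{convMR} as a single-step estimate, which the chain of inequalities in \eqref{LinRate} justifies.
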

\begin{proof}
  Using a fixed  $\sigma \in (0, 1)$ the eigenvector acceleration algorithm
  produces a subsequence $K_1\subset \N$ for which $\tau_k=\sigma$ is used to
  generate the iterates. There exists a complementary  subsequence  $K_2\subset
  \N$ ($K_1 \cup K_2 = \N$) for which $\tau_k=1$ is used to generate the
  iterates.  At any iteration $k$, regardless of whether $\tau_k\in (0, 1)$ or
  $\tau_k=1$, $\|g_{k+1}\| < \|g_{k}\|$. Therefore, the sequence
  $\{\|g_{k}\|\}_{k\in \N}$  is monotonically decreasing, and  since it is
  clearly bounded below it converges. Now, from Theorem  \ref{convMR} and the
  monotonicity of the entire sequence,  we have that $\{\|g_{k}\|\}_{k \in K_1}$
  converges to zero. Consequently,   the whole sequence $\{\|g_{k}\|\}_{k\in
    \N}$   converges to zero, which in turn implies  by continuity that the
  sequence $\{x_k\}_{k\in \N}$  converges to   $x^*= A^{-1}b$.
\end{proof}

As before, the convergence result in Corollary \ref{convEigAcc}  is based on a
worst-case analysis, and in practice, thanks to what is indicated in
Lemma~\ref{eigvgrad}, Lemma~\ref{lem:gain} and Proposition~\ref{prop:gain}, the
eigenvector acceleration algorithm requires far fewer iterations than the
expected slow linear rate described in \eqref{LinRate}. We now illustrate in
Figure \ref{fig1} the behavior of the eigenvector-based acceleration for the
matrix $A$ introduced in Section~\ref{sec:quad}.  The tracked quantity in the
plots is  the 2-norm of the residuals
$r_k = b - Ax_k = - g_k$. The left plot shows that (i) a significant drop is
observed when the criterion from Line~5 in Algorithm~\ref{alg:EigAcc} is
activated and (ii) the convergence towards the solution is faster than with
standard MR, due to break of the zigzag effect. However, on the right plot, we
observe that breaking the zigzag effect with $\tau_k=\sigma$ at every iteration
(without monitoring when the residual is close to an eigenvector) produces a
similar convergence history. This observation motivates the search for an
alternative acceleration scheme that exploits further the search direction when
it aligns with an eigenvector, as the method we introduce in the next section.

\begin{figure}[h!]
  \centerline{
    \includegraphics[width=0.45\linewidth]{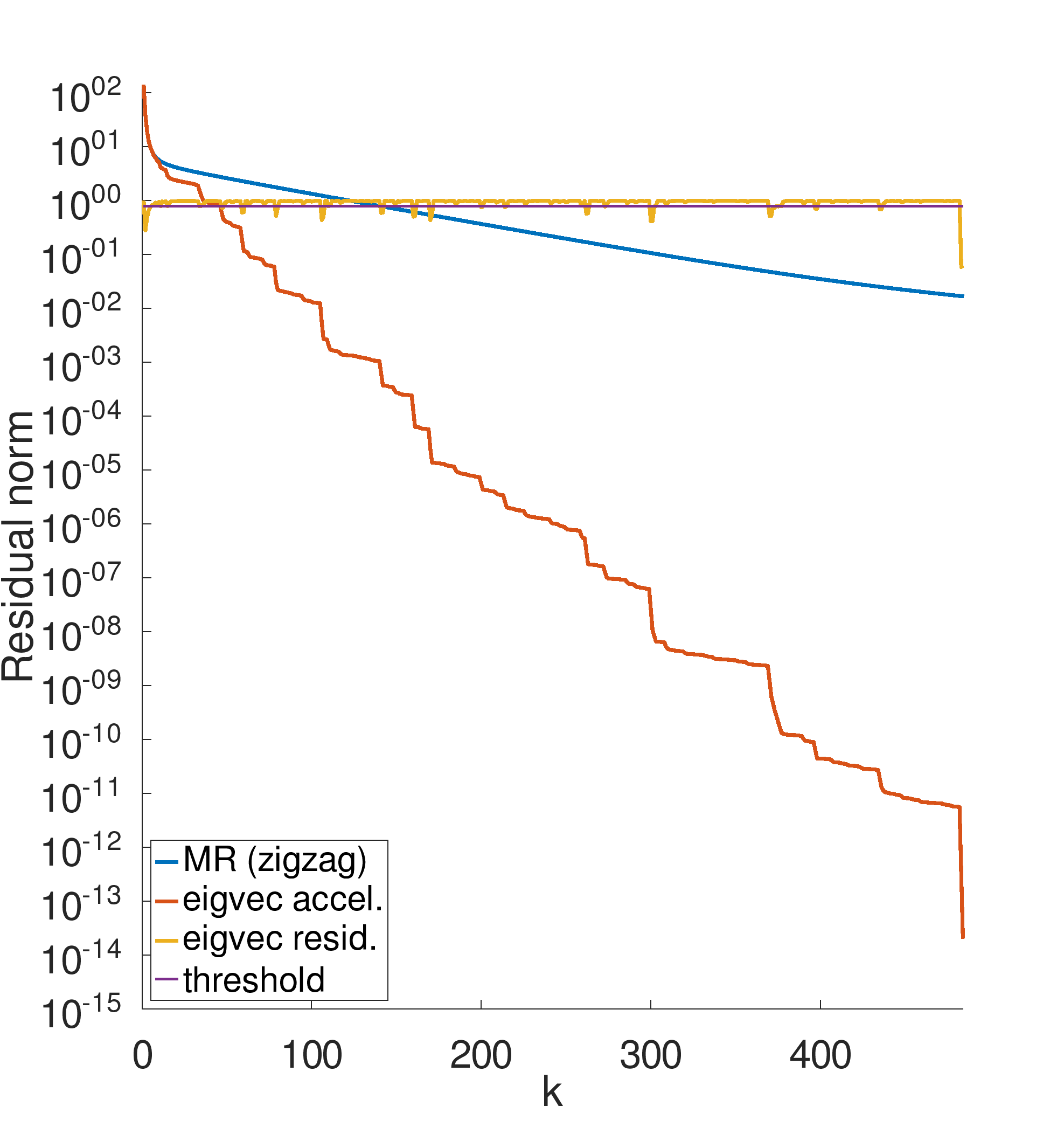}
    \includegraphics[width=0.45\linewidth]{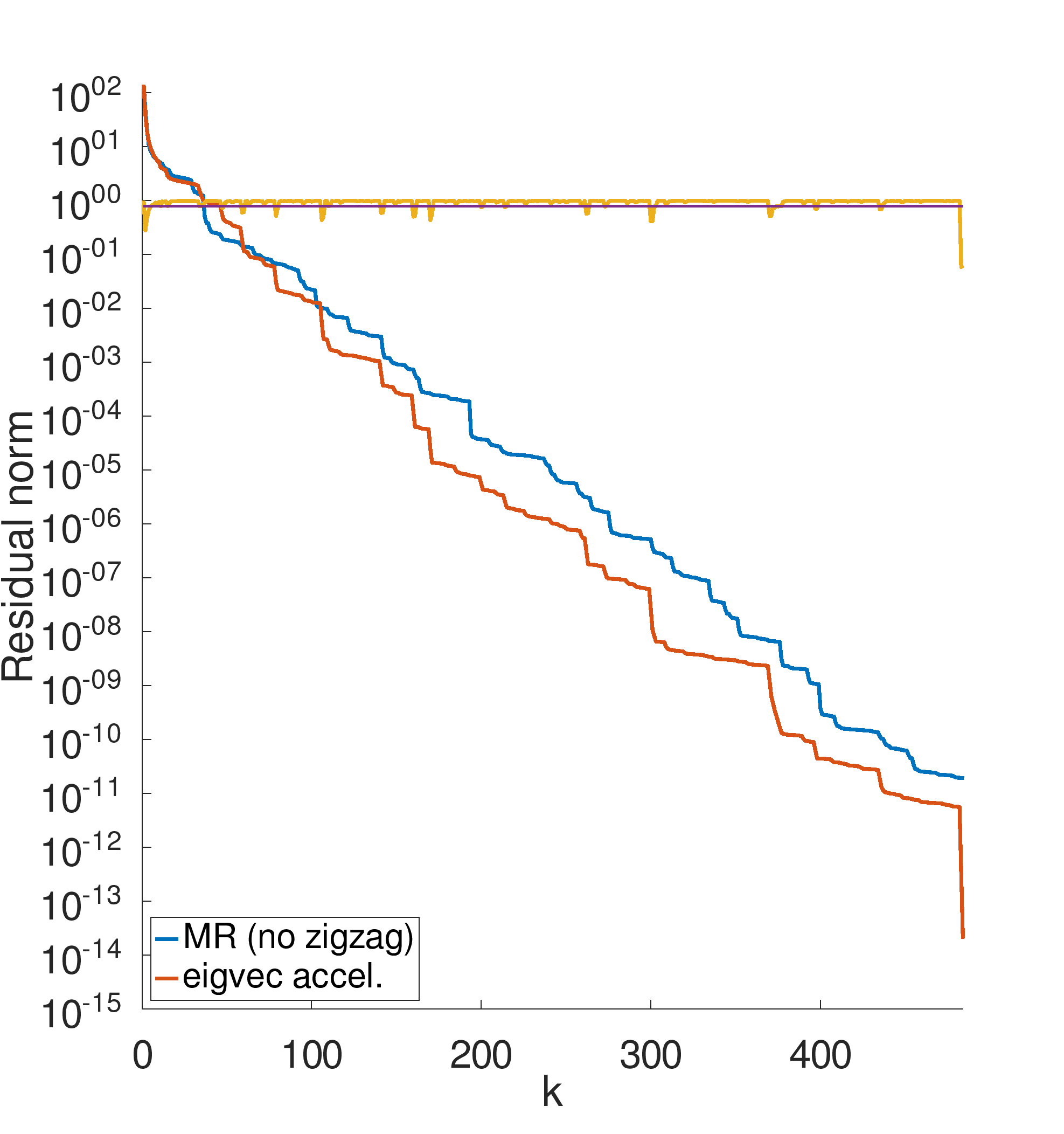}}
  \caption{Eigenvector based acceleration ($\sigma=0.8$, $\epsilon_{\rm eig} =
    0.8$): when the residual is close to an eigenvector (that is, the yellow
    line goes below the threshold), the acceleration is activated (a drop is
    observed on the red line). On the left, the blue line is for comparison with
    MR ($\tau_k=1$ at every iteration): it converges much slower due to the
    zigzag effect. On the right, the blue line is MR with
    $\tau_k=\sigma\in(0,1)$ at every iteration: it behaves similarly to the
    eigenvector acceleration.}\label{fig1}
\end{figure}

\subsection{Lanczos-based acceleration}\label{sec:LBA}

The eigenvector acceleration idea from the previous section can be pushed
further by using a few step of the Lanczos algorithm when the closeness
criterion is activated. To motivate the Lanczos-based projection, we first go
back to the illustration of the behavior of the residual vectors seen in
Section~\ref{sec:quad}, see in particular the discussion of the plots in
Figure~\ref{fig:betFig}. There we saw that the residual vector tends to have not
one large components in the eigenvectors of $A$ but several. This means
that in the transient stage it may be beneficial to project out several
eigenvectors instead of just one as was done in the eigenvector acceleration
case.

We use the same activation criterion as Algorithm~\ref{alg:EigAcc} (Line
5), after which we apply a few (say $m\geq 1$) Lanczos iterations, starting from
the residual vector $r_k=-g_k$ that approximates an eigenvector,  to get an
orthonormal column matrix $V_m\in\R^{n\times m}$ and a tridiagonal matrix $T_m
\in\R^{m \times m}$ such that $T_m=V_m^TAV_m$. Solving the least-squares problem
\eq{eq:LSpb}
  y_m = {\rm argmin}_{z\in\R^{m}}  \|r_k - AV_mz\|^2,
\en
we can accelerate the convergence by setting $x_{k+1} = x_k + V_my_m$ and
$r_{k+1} = r_k - AV_my_m$. Clearly, a suitable choice to start the Lanczos
procedure is to take the vector $r_k$ as its initial vector since $r_k$ is known
from the activation criterion to be a good approximation of at least one
eigenvector of $A$. The Lanczos-based acceleration (LBA) scheme is now presented
in Algorithm~\ref{alg:LBA}. Note that there is a structural difference between
the eigenvector acceleration (Section~\ref{sec:EigAcc}) and the Lanczos-based
acceleration that is important to highlight. The eigenvector acceleration
algorithm uses two different relaxation parameters ($\sigma\in(0,1)$ or 1), but
always uses the negative gradient direction (or residual) to generate the next
iterate. Therefore, a proper use of Theorem \ref{convMR} formally justifies its
global convergence. In the Lanczos-based algorithm, there is a subsequence of
iterations where the direction is no longer the negative gradient, but a
modification of the gradient vector (or residual) resulting from the use of a
few Lanczos iterations.

\begin{algorithm}[ht]
  \centering
  \caption{Lanczos Based Acceleration (LBA)}\label{alg:LBA}
  \begin{algorithmic}[1]
    \State Start: a given initial guess $x_0$; set $r_0=b-Ax_0$ and
    $p_0=Ar_0$; a number of Lanczos steps $m>0$;
  \State Choose: $\sigma$ such that $0 < \sigma < 1$,  and $\epsilon_{\rm eig} < 1$
  \While{(not converged)}
  \State $\ds\alpha_k = \frac{{p_k^\top r_k}}{{p_k^\top p_k}}$
  \If {$\frac{\alpha_k}{\|r_k\|}\Big\|p_k - \alpha_k^{-1}r_k\Big\| <
      \epsilon_{\rm eig}$}
 \Comment{Test whether $r_k = -g_k$ is aligned with an eigendirection\\{\hspace{7.5cm} (recall that $p_k = Ar_k$).}}
  \State $V_m,T_m = {\rm Lanczos}(A, r_k, m)$
  \Comment{If yes, perform $m$ steps of Lanczos and update.}
  \State Solve $y_m = {\rm argmin}_{z\in\R^m} \norm{r_k - AV_m z}$
  \State $x_{k+1} = x_k + V_m y_m$
  \State $r_{k+1} = r_k - AV_m y_m$
  \Else \Comment{If not, use steplength $\sigma\alpha_k$ to break the zigzag effect.}
  \State $x_{k+1} = x_k + \sigma\alpha_k r_k$
  \State $r_{k+1} = r_k - \sigma\alpha_k p_k$
\EndIf
 \State $p_k = Ar_{k+1}$
\EndWhile
\end{algorithmic}
\end{algorithm}

\begin{remark} \label{reductres}
  It is clear from the way in which  we define the iterate $x_{k+1} = x_k+
  V_my_m$ generated by the Lanczos process, that the residual $r_{k+1} = r_k -
  AV_my_m$ satisfies $\|r_{k+1}\| \le \|r_k\|$. In fact we have $\|r_{k+1}\|^2 =
  \|r_k\|^2 - \|A V_my_m \|^2$ because the optimality of $y_m$ is equivalent to
  the orthogonality of  $r_{k+1}$ to the span of $AV_m$. This projection step is
  actually  equivalent to a GMRES projection and the solution $y_m$ to the
  minimization problem in Line~8 of Algorithm~\ref{alg:LBA} can be easily
  computed by solving a small least-squares problem, see for instance
  ~\cite[Section 6.5]{Saad-book2}.
  Moreover, by optimality of $y_m$ and since the first column of $V_m$ is
    proportional to the residual $r_k=-g_k$, it holds, after an application of the
    Lanczos acceleration,
    \[
      \norm{g_{k+1}}^2 \leq \norm{\tilde{g}_{k+1}}^2 \leq c(\sigma)
      \norm{g_{k}}^2
    \]
    where $\tilde{g}_{k+1}$ is the gradient after a standard MR step (no
    acceleration) and $c(\sigma)$ is the constant from Theorem~\ref{convMR}.
\end{remark}

\begin{remark}\label{rem:Impl}
As was observed in the previous remark, the update in Line~9, which is of the form
$x_{k+1} = x_k + \delta_k$, can be obtained from $m$ steps of the GMRES algorithm for solving
the system $A \delta =r_k$ starting with a zero initial vector.  However, since $A$ is Hermitian a short-term recurrence implementation called the Conjugate Residual (CR) method, can instead be invoked, see e.g.,
\cite[Sec. 6.8]{Saad-book2}. The CR algorithm is mathematically equivalent to GMRES when $A$ is Hermitian but in contrast to the implementation of the above algorithm,   it requires storing  five vectors in all.
\end{remark}

\begin{remark}[Case $m=1$] In the case of a single Lanczos step $m=1$, the computation of $y_m$ falls down to the minimization of $\|r_k - zAr_k\|$ over $z\in\R$, that is performing a standard MR step $r_{k+1} = r_k - \alpha_kp_k$. Since this is used only if the criterion on Line 5 of Algorithm~\ref{alg:LBA} is triggered, this implies that LBA with $m=1$ is actually equivalent to the eigenvector acceleration scheme from Algorithm~\ref{alg:EigAcc}.
\end{remark}

\begin{corollary} \label{convLanczAcc}
  Let the conditions of Theorem \ref{convMR} hold.  Then the sequence
  $\{x_k\}_{k\in \N}$ generated by the Lanczos-based acceleration algorithm
  converges to $x^*= A^{-1}b$.
\end{corollary}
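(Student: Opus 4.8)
The plan is to establish a single geometric contraction factor for the residual norms that is valid at \emph{every} iteration, independently of which branch of Algorithm~\ref{alg:LBA} is executed, and then to conclude exactly as in the proof of Corollary~\ref{convEigAcc}. As emphasized in the paragraph preceding the statement, the new difficulty relative to the eigenvector acceleration scheme is that on the subsequence where the Lanczos branch fires the update direction is no longer the relaxed negative gradient, so Theorem~\ref{convMR} cannot be applied to the whole orbit as a single MR process. The key is therefore to produce a per-step estimate $\|g_{k+1}\|^2 \le c(\sigma)\|g_k\|^2$ with the \emph{same} constant $c(\sigma)<1$ in both cases, so that the decay becomes genuinely geometric.

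Following the splitting used in Corollary~\ref{convEigAcc}, I would write $\N = K_1 \cup K_2$, where $K_1$ collects the iterations at which the test on Line~5 fails and a relaxed MR step $x_{k+1}=x_k+\sigma\alpha_k r_k$ is taken, and $K_2$ collects the iterations at which the test succeeds and the Lanczos projection of Lines~6--9 is applied. For $k\in K_1$ a single relaxed MR step is performed, so Theorem~\ref{convMR} gives $\|g_{k+1}\|^2 \le c(\sigma)\|g_k\|^2$ directly. For $k\in K_2$ I would invoke Remark~\ref{reductres}: since the first column of $V_m$ is proportional to $r_k=-g_k$, the subspace $\mathrm{span}(AV_m)$ contains $Ag_k$, and the optimality of $y_m$ in the least-squares problem \eqref{eq:LSpb} guarantees that the Lanczos update reduces the residual at least as much as the optimal ($\sigma=1$) MR step would, giving $\|g_{k+1}\|^2 \le \|\tilde{g}_{k+1}\|^2 \le c(\sigma)\|g_k\|^2$. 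The point I would stress here is that the optimal MR step contracts by $c(1)$ and that $c(1)\le c(\sigma)$ for every $\sigma\in(0,2)$, because $\sigma(2-\sigma)$ attains its maximum at $\sigma=1$; this is precisely what makes $c(\sigma)$ a legitimate uniform bound on $K_2$ as well.

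Combining the two cases yields $\|g_{k+1}\|^2 \le c(\sigma)\|g_k\|^2$ at every $k$, hence $\|g_k\|^2 \le c(\sigma)^k\|g_0\|^2$, and since $c(\sigma)<1$ we obtain $\|g_k\|\to 0$. Finally, because $f$ in \eqref{quad} is strictly convex with unique minimizer $x^*=A^{-1}b$ and $g_k=Ax_k-b$, the convergence $\|Ax_k-b\|\to 0$ together with the invertibility of $A$ forces $x_k\to x^*$ by continuity, closing the argument just as at the end of Theorem~\ref{convMR}.

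The main obstacle is conceptual rather than computational: one must recognize that the non-gradient Lanczos steps still fit under the same geometric envelope, and the inequality $c(1)\le c(\sigma)$ is what lets a single constant govern both branches. All the delicate estimation is already packaged into Theorem~\ref{convMR} and Remark~\ref{reductres}, so no further calculation is needed; the remaining work is simply to assemble these uniform per-step bounds into a global geometric decay and pass to the limit.
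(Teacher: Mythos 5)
Your proposal is correct and follows essentially the same route as the paper's proof: split $\N$ into the relaxed-MR subsequence and the Lanczos subsequence, apply Theorem~\ref{convMR} on the former and the optimality argument of Remark~\ref{reductres} on the latter to get a uniform per-step contraction of $\|g_k\|$ by $\sqrt{c(\sigma)}$, and pass to the limit. The only (harmless) difference is that you compare the Lanczos step to the \emph{optimal} MR step and invoke $c(1)\le c(\sigma)$, whereas the paper's Remark~\ref{reductres} compares it directly to the relaxed MR step, which yields $c(\sigma)$ immediately.
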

\begin{proof}
  Using a fixed  $\sigma \in (0, 1)$ the  Lanczos-based  acceleration algorithm
  produces a subsequence $K_1\subset \N$ for which only the relaxation parameter
  $\sigma$ is used to generate the iterates. There exists a  complementary
  subsequence $K_2\subset \N$ ($K_1 \cup K_2 = \N$) for which the Lanczos method
  is activated to generate the next iterate. However, from Theorem  \ref{convMR}
  and Remark \ref{reductres} we have that $\|g_{k+1}\| \leq \sqrt{c(\sigma)}
  \|g_{k}\|$ for all $k\in K_1$ or $k\in K_2$.
  % Therefore, the sequence $\{\|g_{k}\|\}_{k\in \N}$  is monotonically decreasing,
  % and  since it is bounded below it converges. Now, from Theorem  \ref{convMR} and
  % the monotonicity of the entire sequence, we have that $\{\|g_{k}\|\}_{k \in
  %   K_1}$ converges to zero.
  Consequently,   the whole sequence $\{\|g_{k}\|\}_{k\in \N}$   converges to
  zero, which  implies  by continuity that $\{x_k\}_{k\in \N}$  converges to
  $x^*= A^{-1}b$.
\end{proof}

We now illustrate in Figure \ref{fig2} the behavior of the Lanczos-based
acceleration, on the same problem described in Figure~\ref{fig1}, when compared
with the eigenvector-based acceleration and also with the MR method cleared from
the zigzag effect. This time, even when breaking the zigzag effect, the impact
of the few Lanczos steps when the residual aligns with an eigenvector gives a
significant faster convergence. Obviously, one should keep in mind that every
$m$ Lanczos steps require $m+1$ additional matrix-vector products when triggered
but the overall performance is still better (in Algorithm~\ref{alg:LBA},
counting only matrix-vector products when the matrix is $n\times n$, there are
$m$ products in Line 7 and 1 in Line 10).

\begin{figure}[h!]
  \centering
    \begin{minipage}{0.49\textwidth}
      \centering
      \includegraphics[width=0.8\linewidth]{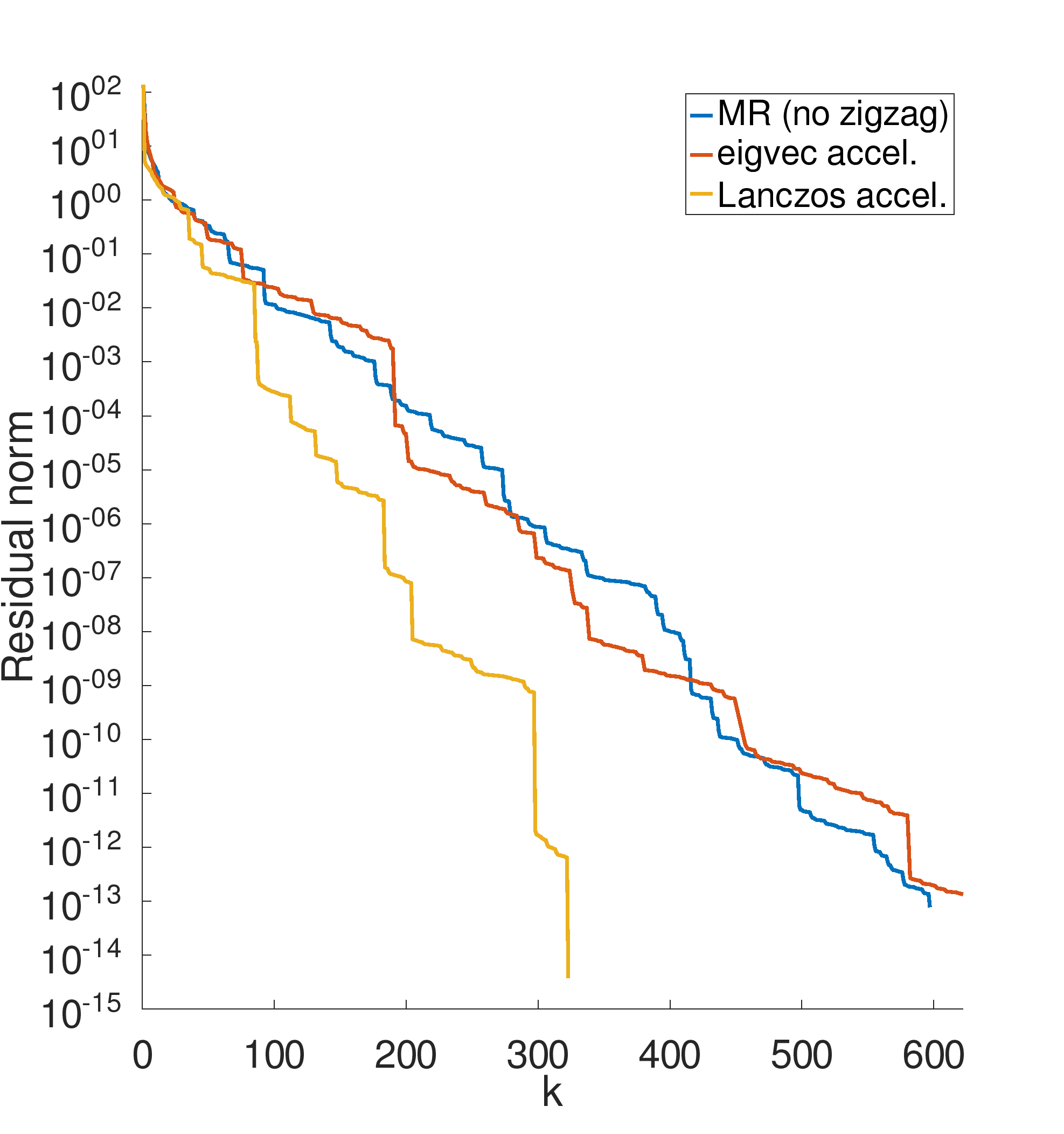}
    \end{minipage}\hfill
    \begin{minipage}{0.49\textwidth}
    \centering
    \begin{tabular}{@{}lcc@{}}
        \toprule
                             & matvec  & time (s)      \\ \midrule
        MR (no zigzag)       & 597     & $3.90\cdot10^{-2}$ \\
        Eigenvector accel.   & 622     & $3.84\cdot10^{-2}$ \\
        Lanczos-based accel. & 396     & $2.42\cdot10^{-2}$ \\ \bottomrule
    \end{tabular}
    \end{minipage}

  \caption{Lanczos based acceleration ($\sigma=0.8$, $\epsilon_{\rm eig} =
    0.8$): using a few steps ($m=5$) of the Lanczos algorithm  makes the
    acceleration process more efficient, both in terms of iterations (left),
    number of matrix-vector products and CPU time (right).
    Note that, for the Lanczos-based
    acceleration, the number of matrix-vector products is higher than the number
    of iterations because every time a Lanczos projection is performed,
    additional matrix-vector products are used.} \label{fig2}
\end{figure}

%% \subsubsection{Adaptive Lanczos projection}

\subsection{Adaptive Lanczos projection}  \label{adplncz}

In the previous section, a fixed number $m$ of steps were performed in the
Lanczos projection. This might not be optimal since (i) the user does not have
control on the residual norm reduction performed by the projection and (ii) in
the early stages of the convergence when the current iterate is far from the
solution, it might not be worth performing all the Lanczos projection steps. One
possibility is to use instead an adaptive strategy where, at every steps during
the Lanczos algorithm, we check if the current residual is small enough: if
$\norm{r_k - AV_my_m} \leq \texttt{reltol}\norm{r_k}$, then we stop the Lanczos
projection at this point and can use $y_m$ for the acceleration step as in
\eqref{eq:LSpb}, otherwise we perform an additional Lanczos step. The parameter
\texttt{reltol} can then be freely chosen by the user, either as a constant
smaller than 1 or a value that depends on the residual norm\footnote{Similarly
  to inexact Newton methods for instance.}. This adaptive Lanczos projection is
summarized below in Algorithm~\ref{alg:LBA_adapt}.

\begin{algorithm}[ht]
  \centering
  \caption{Adaptive Lanczos}\label{alg:LBA_adapt}
  \begin{algorithmic}[1]
    \State Start: a given initial guess $r$; the matrix $A$; a relative tolerance $\texttt{reltol}$ and a maximum number of Lanczos steps $m$
    \State Initialize matrices $V_0$ and $H_0$ as in Lanczos
  \For{$i = 1\dots m$}
  \State Compute $V_i$ and $H_i$ from $V_{i-1}$ and $H_{i-1}$ with one Lanczos step
  \State Solve $y_i = {\rm argmin}_{z\in\R^i} \norm{r - AV_iz}$
  \Comment{This is done as explained in Remark~\ref{rmk:givens} below.}
  \If {$\norm{r - AV_iy_i} \leq {\texttt{reltol}}\times\norm{r}$}
  \State break
  \EndIf
\EndFor
\end{algorithmic}
\end{algorithm}

\begin{remark}[Givens rotations]\label{rmk:givens}
For practical implementations of Algorithm~\ref{alg:LBA_adapt}, one actually
only needs to update the residual until sufficient decrease is reached and
compute the solution to the least-square problem. Updating the residual
iteratively can be done using Givens rotations, similarly to what is done in
practical implementations of GMRES, see \cite[Section 6.5.3]{Saad-book2}.
\end{remark}

We compare this idea to having a fixed number $m$ of Lanczos steps and we
present the results in Figure~\ref{fig:adapt_Lanczos}. In
all cases, we stop the process when $\|g_k\| \leq \texttt{tol} \|g_0\|$ where
$\texttt{tol} = 10^{-15}$. The conclusions are quite
striking, both with a fixed number of iterations and with the adaptive version
of the Lanczos algorithm where \texttt{reltol} is fixed.
As expected, as we increase $m$ or tighten the
relative tolerance, the number of iterations decreases. Still, one has to pay
attention to the total number of matrix-vector products. More precisely,
regarding the LBA algorithm with a fixed number of Lanczos steps during the
speed-up phase, we see that the minimum number of matrix-vector products is
achieved for $m=10$. We decided not to further increase $m$ to keep the
resolution and storage cost of the least-squares problems negligible. The
adaptive Lanczos with a (fixed) tight relative tolerance of $10^{-2}$
gives an algorithm that converges similarly.
Note however that, in this case, we fixed the maximum
number of Lanczos steps to $m=10$ and that it is reached almost everytime the
projection is triggered. Finally, we ran the same experiment with an adaptive
Lanczos projection where \texttt{reltol} is some power of the residual. This time, the power of the residual does not have much influence on the number of iterations and all the choices lead to sharp acceleration.
As a conclusion, with proper choice of stopping criterion
for the Lanczos projection, all these experiments yield better convergence,
both in terms of iterations and matrix-vector products, than both the standard
relaxed MR algorithm and the eigenvector-based acceleration.

Finally, as for computation times, the LBA algorithm with fixed $m$ clearly
performs better than the relaxed MR algorithm. The adaptive strategies are not
competitive timing-wise, but this is actually due to the iterative updates of
the residuals which are not negligible for matrices of size $900\times900$. We
ran the same experiments on larger matrices and we could observe a clear
advantage for adaptive LBA algorithms against relaxed MR iterations. For
instance, when  $A$ is larger, of  size $2,25\cdot10^4\times2,25\cdot10^4$, the
relaxed MR converges in about $2.9$ s \emph{vs} $1.4$ s for the adaptive LBA
with \texttt{reltol} $= 0.05$.

\begin{figure}[p!]
    \begin{minipage}{0.4\textwidth}
      \centering
      \includegraphics[width=0.9\linewidth]{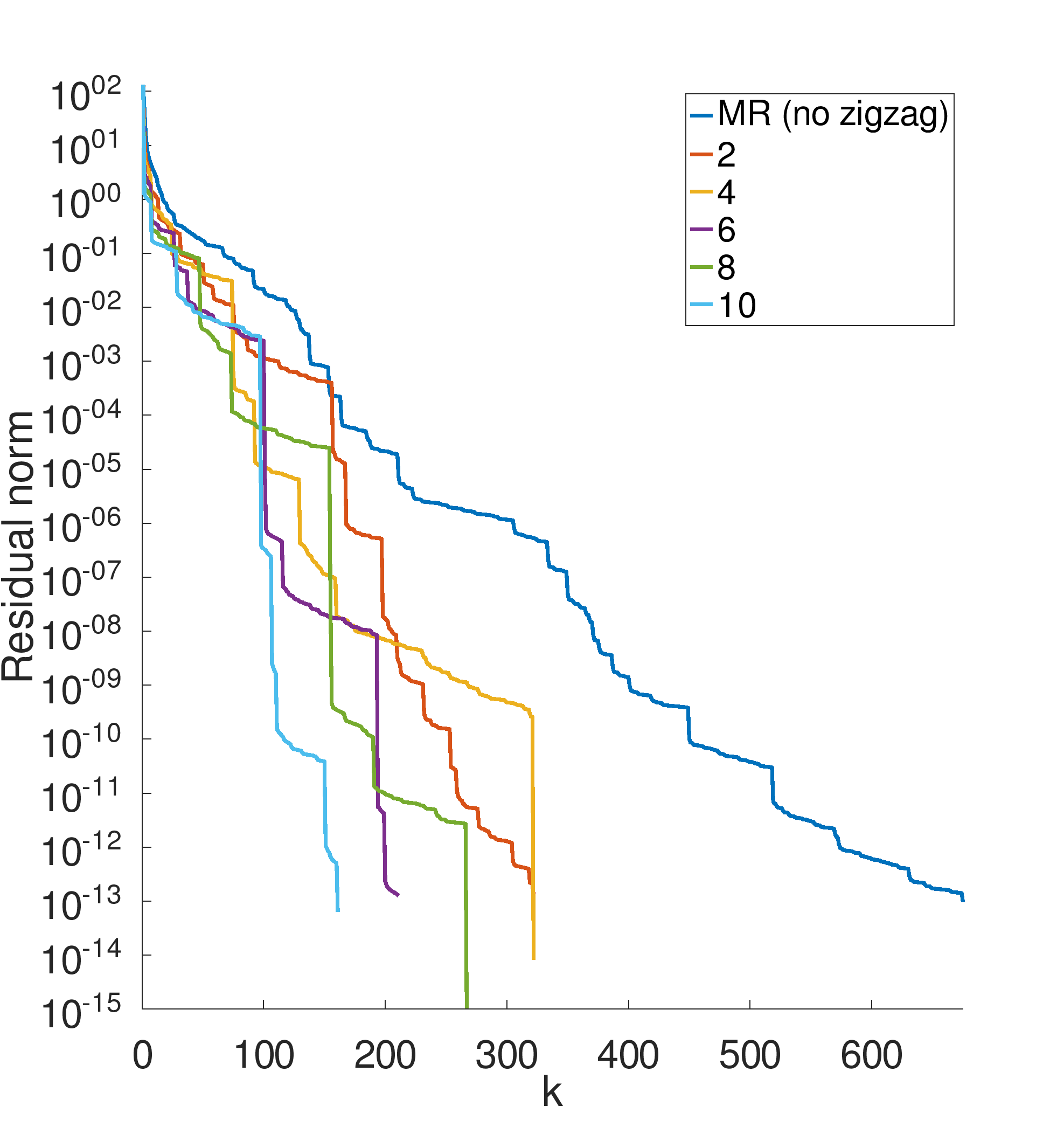}
    \end{minipage}\hfill
    \begin{minipage}{0.59\textwidth}
    \centering
    \begin{tabular}{ccccc}
        \toprule
        \multicolumn{4}{c}{fixed}                    \\
        $m$ & iterations & matvec & time (s) & Lanczos calls \\ \midrule
        2   & 321       & 388     & $2.94\cdot10^{-2}$ & 22              \\
        4   & 321       & 367     & $2.75\cdot10^{-2}$ & 9               \\
        6   & 210       & 274     & $1.84\cdot10^{-2}$ & 9               \\
        8   & 266       & 339     & $1.98\cdot10^{-2}$ & 8               \\
        10  & 160       & 249     & $1.68\cdot10^{-2}$ & 8               \\ \bottomrule
    \end{tabular}
    \end{minipage}

    \begin{minipage}{0.4\textwidth}
      \centering
      \includegraphics[width=0.9\linewidth]{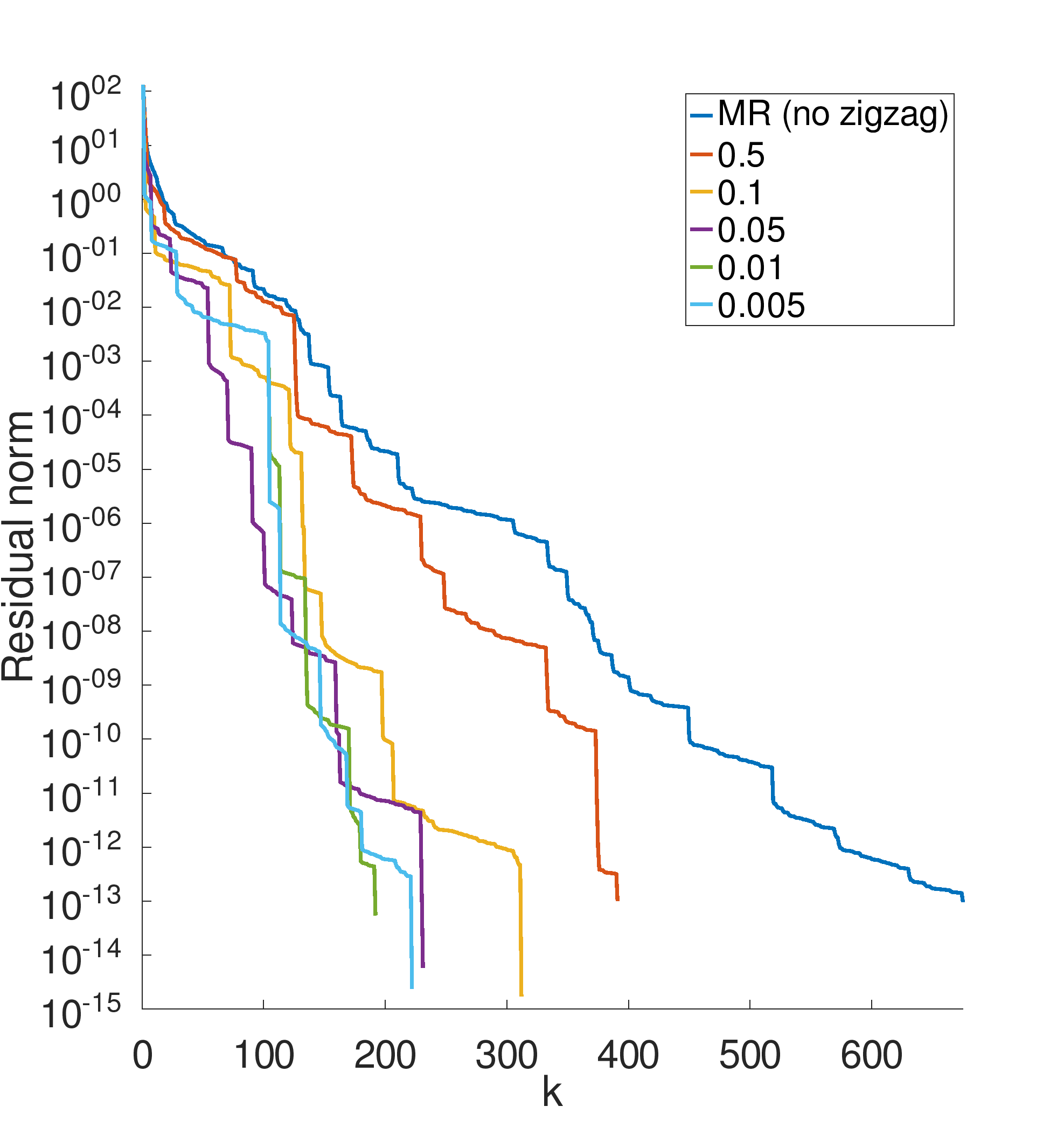}
    \end{minipage}\hfill
    \begin{minipage}{0.59\textwidth}
    \centering
    \begin{tabular}{ccccc}
        \toprule
        \multicolumn{4}{c}{adaptive}                             \\
        \texttt{reltol} & iterations & matvec & time (s) & Lanczos calls \\ \midrule
        $5\cdot10^{-1}$ & 390        & 446    & $4.16\cdot10^{-2}$ & 19               \\
        $10^{-1}$       & 311        & 385    & $3.89\cdot10^{-2}$ & 11              \\
        $5\cdot10^{-2}$ & 230        & 345    & $4.51\cdot10^{-2}$ & 12                \\
        $10^{-2}$       & 191        & 297    & $4.18\cdot10^{-2}$ & 10               \\
        $5\cdot10^{-3}$ & 221        & 318    & $4.68\cdot10^{-2}$ & 9                \\ \bottomrule
    \end{tabular}
    \end{minipage}

    \begin{minipage}{0.4\textwidth}
      \centering
      \includegraphics[width=0.9\linewidth]{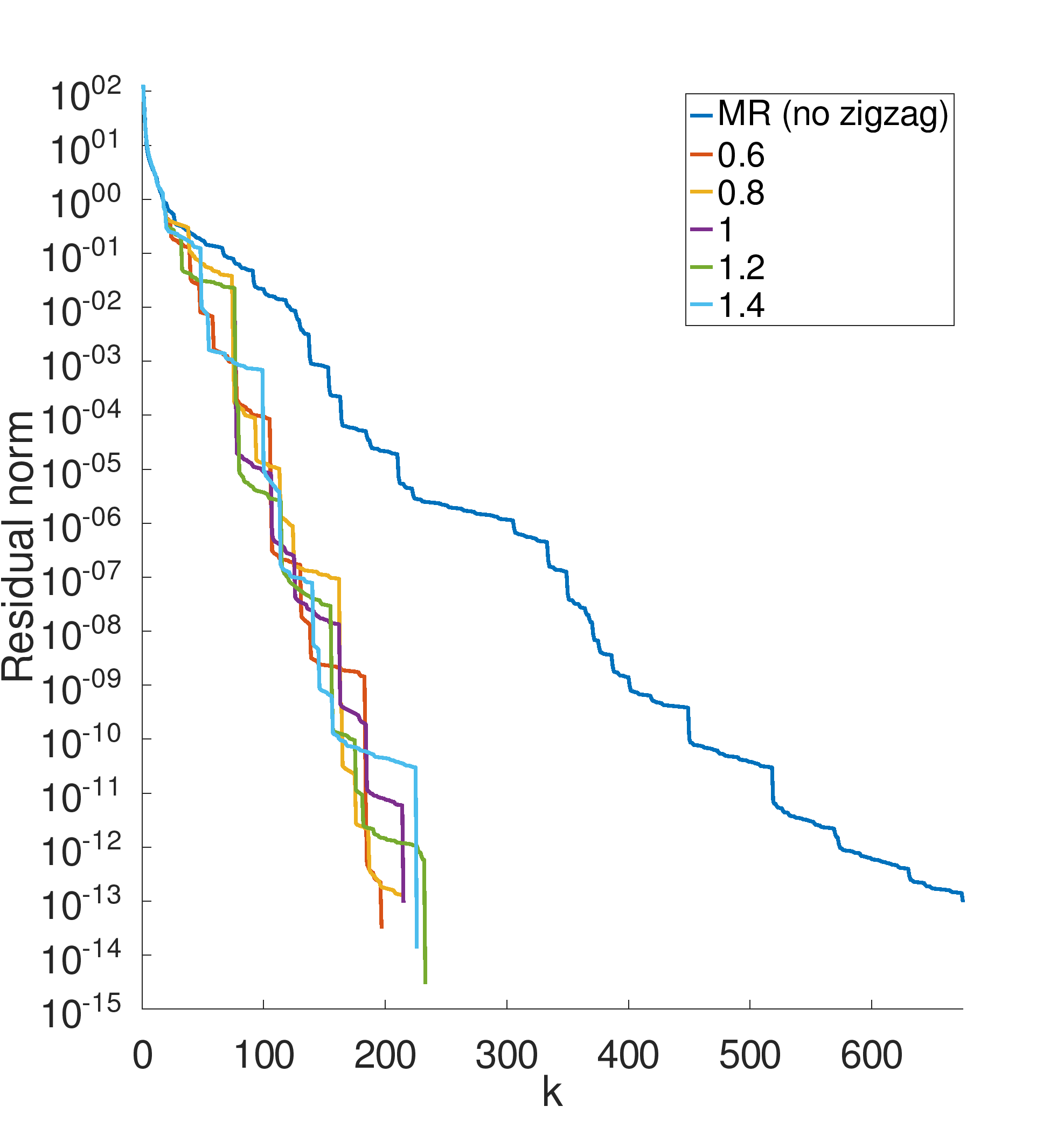}
    \end{minipage}\hfill
    \begin{minipage}{0.59\textwidth}
    \centering
    \begin{tabular}{ccccc}
        \toprule
        \multicolumn{4}{c}{adaptive}                                    \\
        \texttt{reltol}       & iterations & matvec & time (s) & Lanczos calls \\ \midrule
        $\norm{r_k}^{0.6}$ & 196        & 338    & $5.43\cdot10^{-2}$ & 20               \\
        $\norm{r_k}^{0.8}$ & 213        & 333    & $5.01\cdot10^{-2}$ & 18               \\
        $\norm{r_k}^{1}$   & 214        & 319    & $6.63\cdot10^{-2}$ & 16               \\
        $\norm{r_k}^{1.2}$ & 232        & 349    & $5.03\cdot10^{-2}$ & 17               \\
        $\norm{r_k}^{1.4}$ & 225        & 333    & $4.63\cdot10^{-2}$ & 16              \\ \bottomrule
    \end{tabular}
    \end{minipage}

    \caption{Lanczos-based acceleration vs MR  (no zigzag, in dark blue and with
      reference 675 matrix-vector products and a CPU time of $\approx 3.26\cdot10^{-2}$ s)
      for different cases. We used $\sigma=0.8$ and $\epsilon_{\rm eig}=0.8$.
      (Top) Fixed number $m$ of Lanczos iterations when triggered.
      (Middle) Adaptive Lanczos projection with fixed parameter \texttt{reltol}.
      (Bottom) Adaptive Lanczos projection with \texttt{reltol} as some power of the residual.
      In the two adaptive versions, the Lanczos projections stop if $m=10$ is reached.}
    \label{fig:adapt_Lanczos}
\end{figure}

\subsection{An analogy with algebraic multigrid methods}

\textbf{Theoretical comparison}
At this point we can draw an analogy between the Lanczos-based acceleration algorithm (LBA) and the basic two-grid V-cycle algebraic multigrid method (AMG) in the quadratic case:

\begin{itemize}
	\item  AMG combines approximations of the solution in spaces of different dimensions. When considering two subspaces which support the approximations of the solution, the fine space $V_f$ is associated to a large dimensional approximation space, while the coarse space $V_c$ is associated to a small dimensional approximation space. The basic principle of AMG consists in computing the residual $r_f$ after some fixed iterations of a pre-smoother (typically the damped Jacobi method -- DJM) in $V_f$, and then to restrict it to $V_c$ as $r_c=I_f^cr_h$. The error $e_c$ in $V_c$ solves the linear system restricted to $V_c$: $A_ce_c=r_c$ which is solved exactly. The next approximation of the solution is then updated by adding the extension $e_f$ of $e_c$ to $V_f$ as $e_f=I_c^fe_c$, and an additional fixed number of iterations a post-smoother (DJM) is done. We take $I_c^f=(I_f^c)^\top$ and a new cycle can begin. We refer to  \cite{xu17} for further details.
    \item LBA combines the approximation of the solution of the linear system on the ambient space and a correction which consists in an extension of an approximation of the current residual in a small dimensional space (obtained in the Lanczos process  by few iterations of Arnoldi method). The two steps are applied alternatively (as a V-Cycle), the Lanczos acceleration is activated not after a fixed number of MR iterations but after a criterion is satisfied.
\end{itemize}

We summarize in a nutshell the parallel between the two methods in Table~\ref{tab:AMG}. Moreover, we presented above different versions of LBA, from the fixed number $m$ of iterations at each Lanczos acceleration call to the various adaptive LBA with a limited $m$ (generally $\le 10$). As an illustration of the corresponding multigrid cycle, Figure~\ref{MGcycle} (left) hereafter represents schematically an history of the MG cycle when considering the original LBA algorithm. In Figure~\ref{MGcycle} (right), we present similarly the MG cycle associated to an adaptive version of LBA where this time the dimension of the Lanczos spaces can vary. Notice that the way the coarse spaces are defined differs from a method to the other: in AMG $V_c$ is deduced from $V_f$ by a coarsening procedure based on matrix graph technique, associating neighbor vertices, and coupled vertices, and its dimension is fixed from the begining ; in LBA, the coarse space changes at each outer iteration, say at each application of the Lanczos process, and its dimension can vary, according to the chosen strategy (number of Lanczos iterations, adaptive or non adaptive stopping criteria).

\vfill
\begin{table}[h!]
\begin{center}
\begin{tabular}{lll}
	\toprule
	Step & AMG&LBA\\
	\midrule
	Pre-smoothing &$u\coloneqq \nu\mbox{ iterations (DJM)}$ & $u\coloneqq \mu_1 \mbox{ iterations (MR)}$\\
	Compute residual & $r\coloneqq b-Au$&$r\coloneqq b-Au$\\
	Approximate error $e_H$
	& $e_c\coloneqq A_c^{-1}I_f^cr$ & $V,H={\rm Lanczos}(r,A,m)$\\
	Extend error to large dim space&
	$e_f\coloneqq I_c^fe_c$ & $e\coloneqq Vy$\\
	Correction & $u\coloneqq u+e_f$ & $u\coloneqq u+Vy$\\
	Post-smoothing& $u\coloneqq \nu\mbox{ iterations (DJM)}$ &$ u\coloneqq\mu_2\mbox{ iterations (MR)}$\\
	\bottomrule
\end{tabular}
\end{center}
\caption{Analogy between algebraic multigrid and Lanczos-based acceleration.}\label{tab:AMG}
\end{table}
\vfill

\begin{figure}[h!]
    \centering
\includegraphics[height=0.24\linewidth]{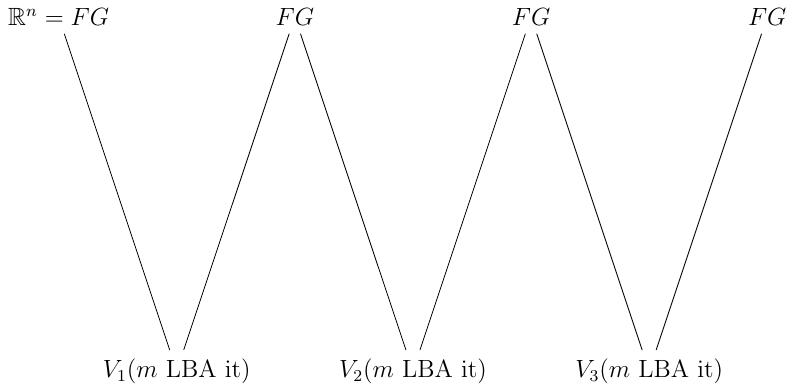}\hfill
\includegraphics[height=0.24\linewidth]{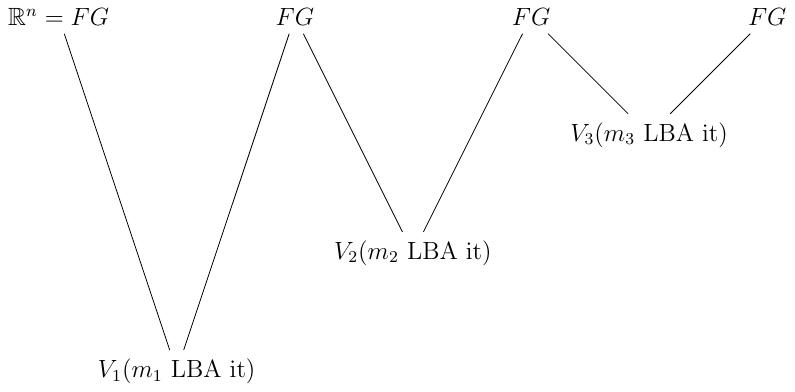}
\caption{(Left) Multigrid-like cycle for LBA: despite the dimension of the Lanczos spaces being the same at every activation of the acceleration, the underlying spaces are built from different residuals and are thus different. (Right) Multigrid-like cycle for an adaptive LBA: this time, the dimension of the Lanczos spaces may vary at each iteration.}
\label{MGcycle}
\end{figure}

\textbf{Numerical comparison}
In AMG, pre-smoothing and post-smoothing procedures are done by a fixed number $\nu$ of DJM iterations while, in LBA, it is realized with a variable number $\mu_1$ or $\mu_2$ of MR iterations cleared from the zigzag effect. A question that can be addressed is to explore the pre-smoothing and post-smoothing procedures of AMG with the coarsening and the low dimensional solution of the system provided by the Lanczos subspace acceleration used in LBA. We give an illustration of such a combination in Figure~\ref{figmglba} and Table~\ref{tab:mglba} on the same matrix $A$ as above (with random $b$): we compare MR without the zigzag effect, LBA ($m=10$) and a two-levels method\footnote{The appropriate term should actually be a bi-space method rather than a multigrid method} based on Lanczos acceleration on the low dimensional space, together with a pre-smoother and post-smoother on the whole space provided by DJM iterations with a random damping coefficient ($\beta>0$ is a free parameter):
\[
u \coloneqq u +\alpha {\rm diag}(A)^{-1}r \quad\text{where}\quad \alpha \text{ follows a uniform law on } [0,\beta].
\]
We observe that the acceleration criterion is still frequently activated when using a classical smoother such as DJM instead of relaxed MR and that the acceleration itself is attributable exclusively to the Lanczos subspace iterations. The values of the damping parameter $\alpha$ also matter: an optimal (in terms of matrix-vector products) value seems to be $\beta\approx7.0$. Even with such an optimal value, the LBA-DJM bi-space scheme performs only slightly better than the relaxed MR iterations (and can even fail to be monotone for $\beta$ too large). It never outperforms LBA-MR: based on additional numerical tests, we can say that this is due to (i) the DJM producing residuals supported on much more eigenmodes than relaxed MR and (ii) the fact that relaxed MR only requires one matrix-vector product per iteration (the activation criterion can be directly computed) while DJM requires two matrix-vector products by iteration (one to evaluate the activation criterion and one to update the residuals). It is therefore crucial to couple the Lanczos subspace acceleration with a proper smoother in order to obtain significant accelerations.

\vfill
\begin{table}[h!]
\centering
LBA-DJM ($m=10$)
\begin{tabular}{@{}ccccccc@{}}
\toprule
Value of $\beta$    & $0.5$       & $1.0$      & $5.0$     & $7.0$     & $10.0$     & $15.0$     \\ \midrule
iterations (matvec) & 492 (1175) & 251 (663) & 97 (505) & 89 (489) & 82 (495)  & 92 (585) \\
\midrule
time (s) & $7.48\cdot 10^{-2}$ & $4.31\cdot 10^{-2}$ & $3.62\cdot 10^{-2}$ & $3.57\cdot 10^{-2}$ & $3.54\cdot 10^{-2}$ & $4.21\cdot 10^{-2}$ \\
\bottomrule
\end{tabular}
\[ \text{Reference:} \quad \text{MR (no zigzag):}\; 674\ (675) \text{ in } 3.26\cdot10^{-2} \text{s}
\qquad \text{LBA-MR $(m=10)$:} \; 160\ (249) \text{ in } 1.68\cdot10^{-2} \text{s}\]
\caption{Comparison of LBA with DJM as smoother (LBA-DJM) and MR (no zigzag) or
  LBA with MR as smoother (LBA-MR). We used $\sigma=0.8$ and $\epsilon_{\rm
    eig}=0.8$.  In parenthesis is the number of matrix-vector products, which
may be larger than the actual number of iterations when Lanczos subspace
iterations are used.} \label{tab:mglba}
\end{table}
\vfill

\begin{figure}[p!]
    \centerline{{~}\hfill
    \includegraphics[width=0.4\linewidth]{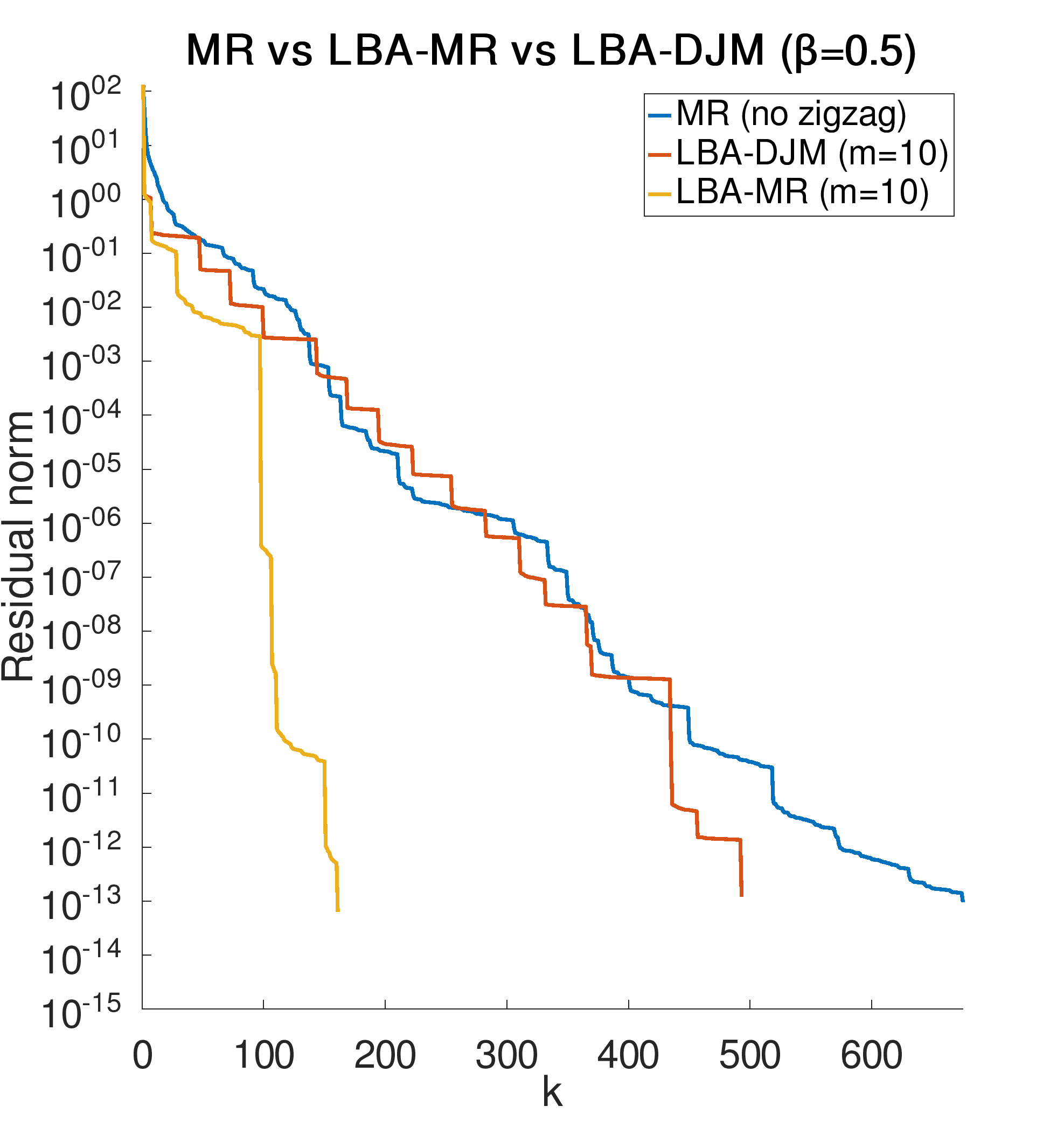}\hfill
    \includegraphics[width=0.4\linewidth]{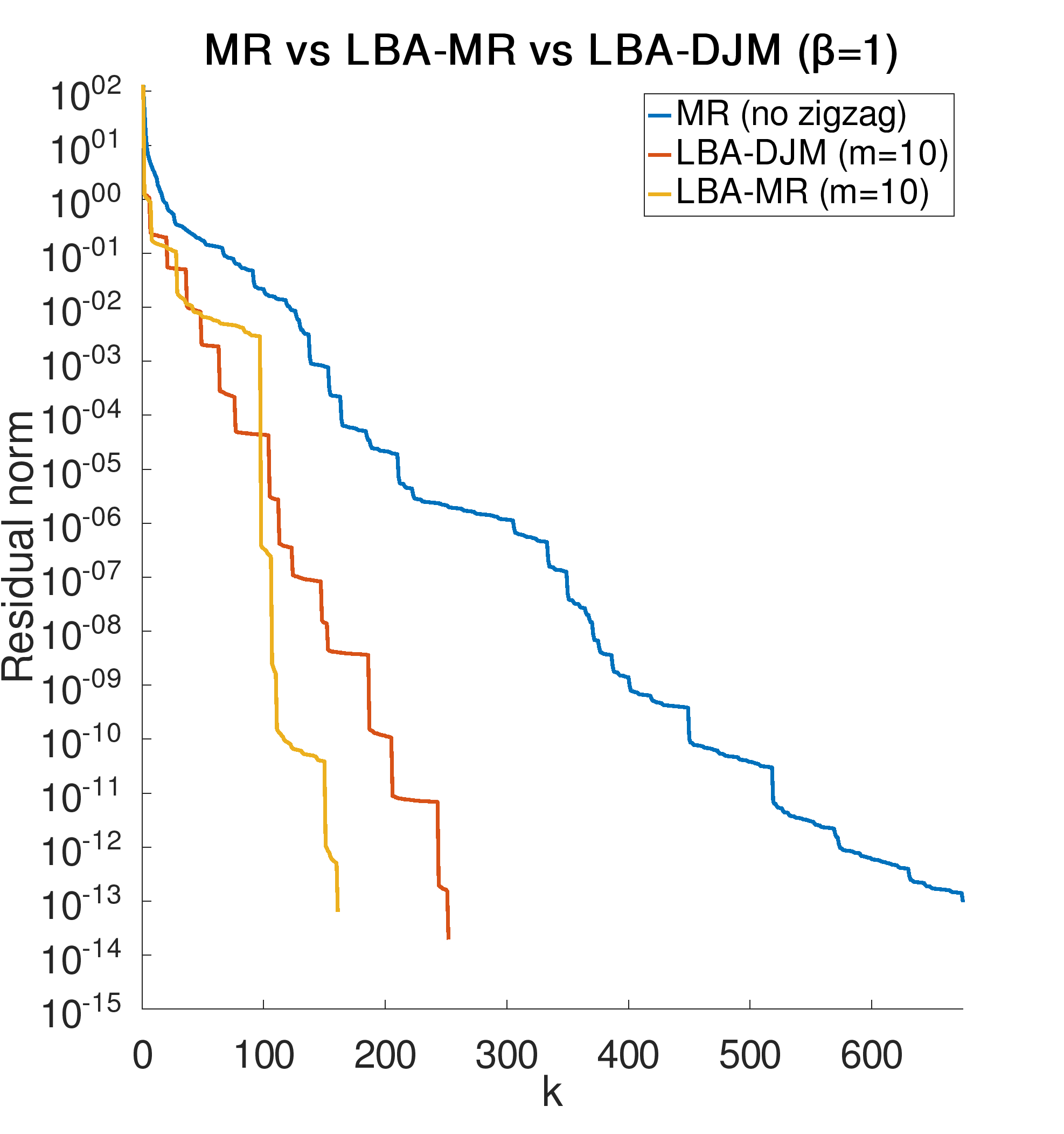}\hfill{~}}
    \centerline{\hfill
    \includegraphics[width=0.4\linewidth]{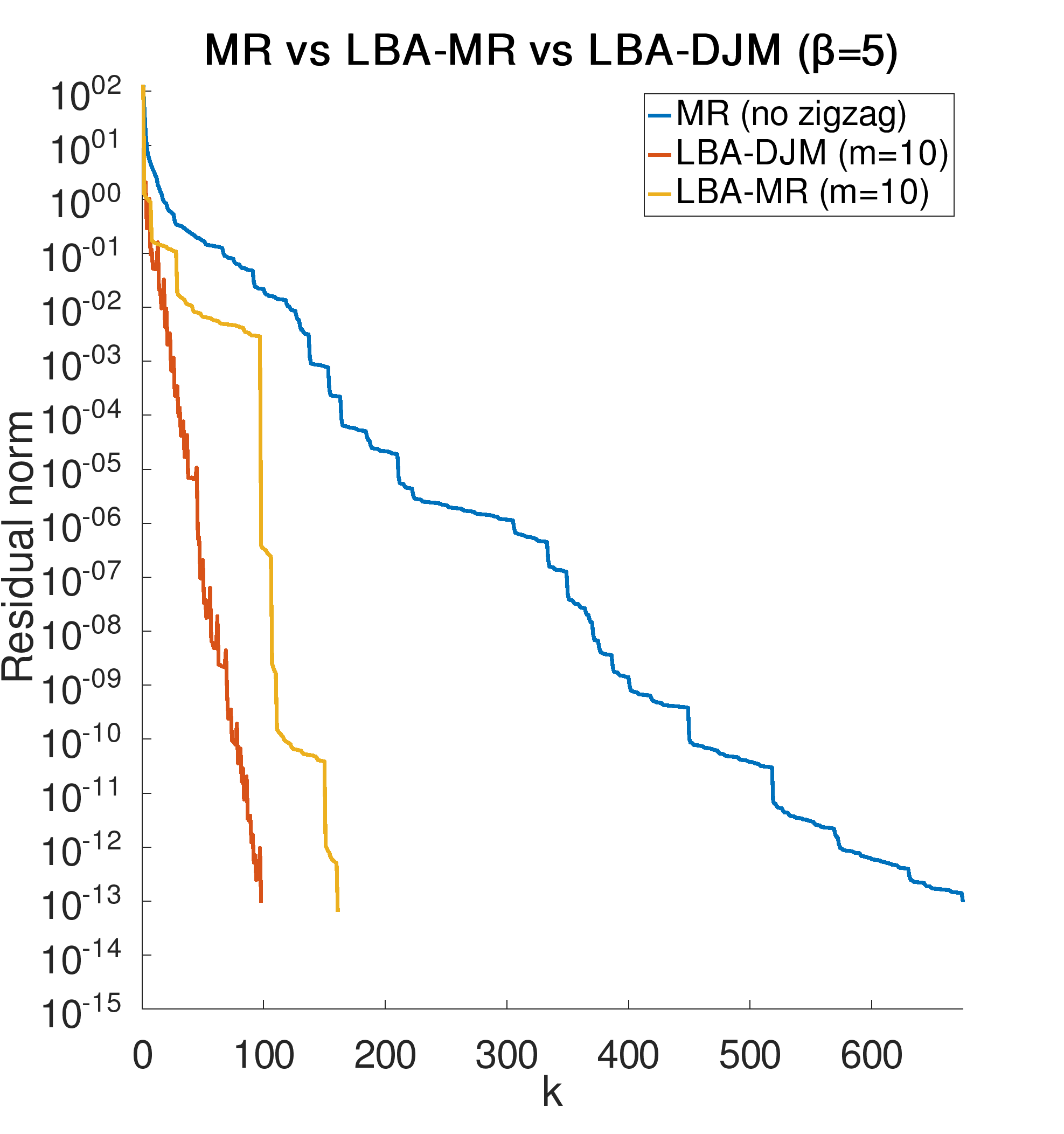}\hfill
    \includegraphics[width=0.4\linewidth]{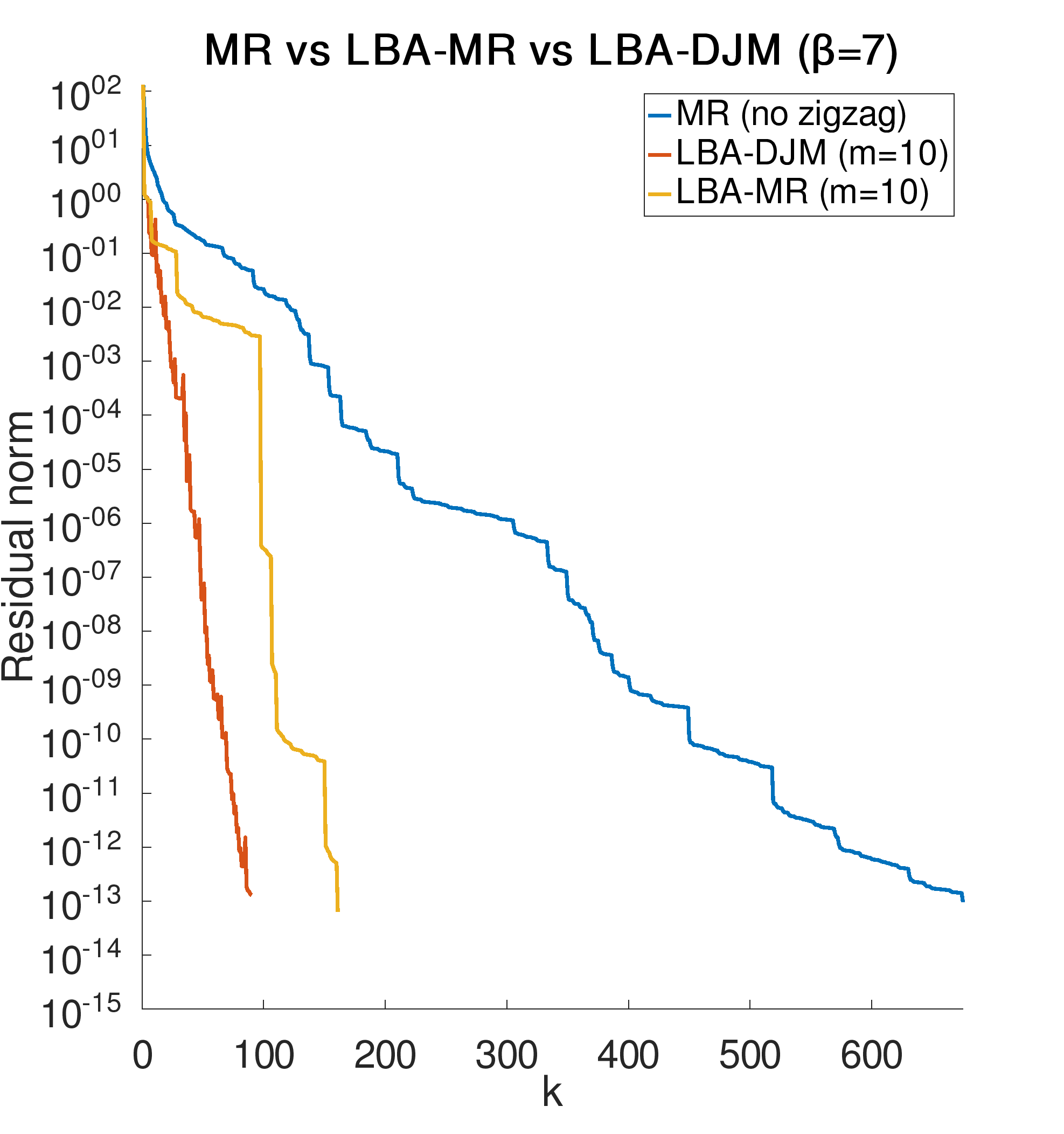}\hfill{~}}
    \centerline{\hfill
    \includegraphics[width=0.4\linewidth]{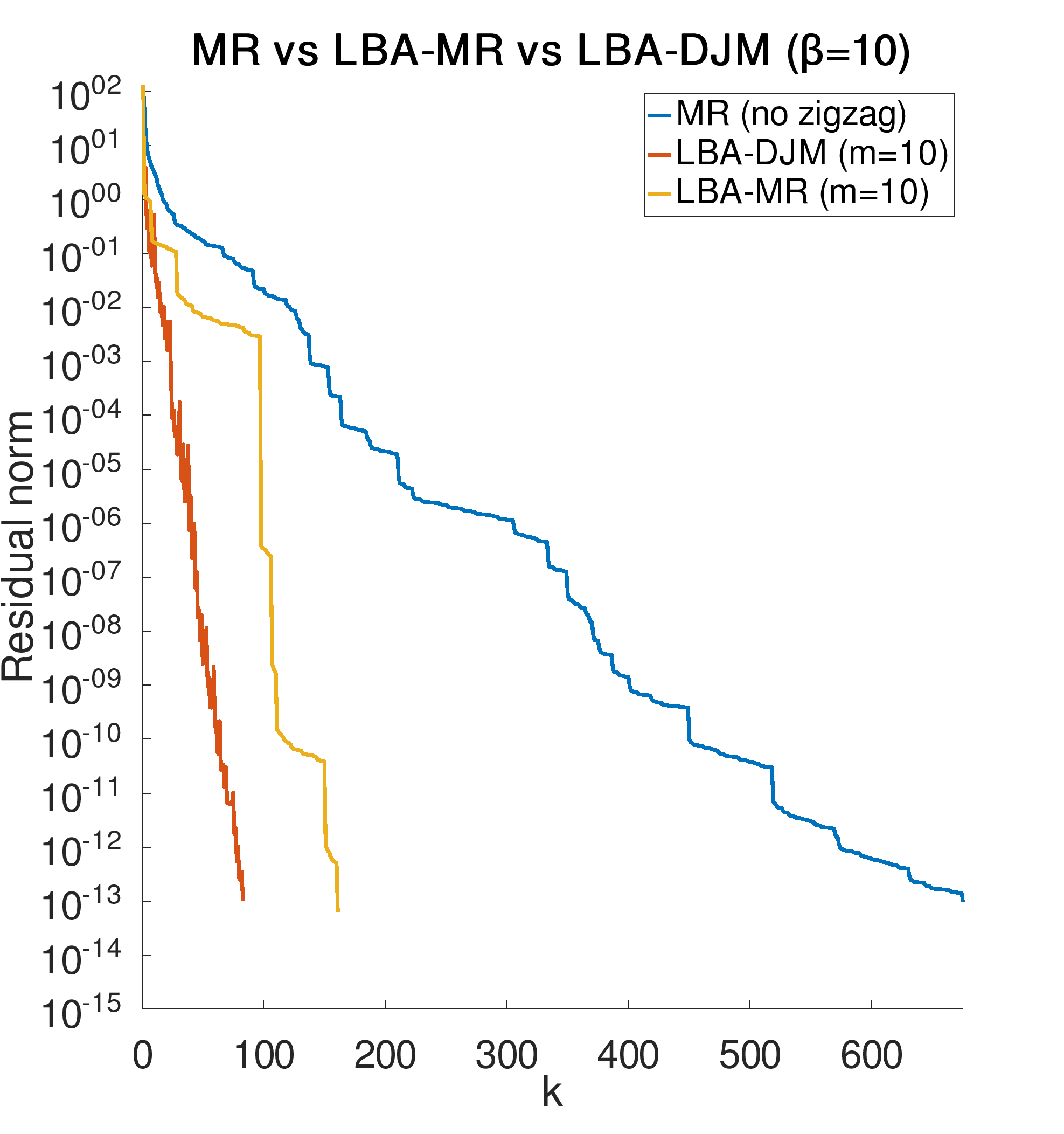}\hfill
    \includegraphics[width=0.4\linewidth]{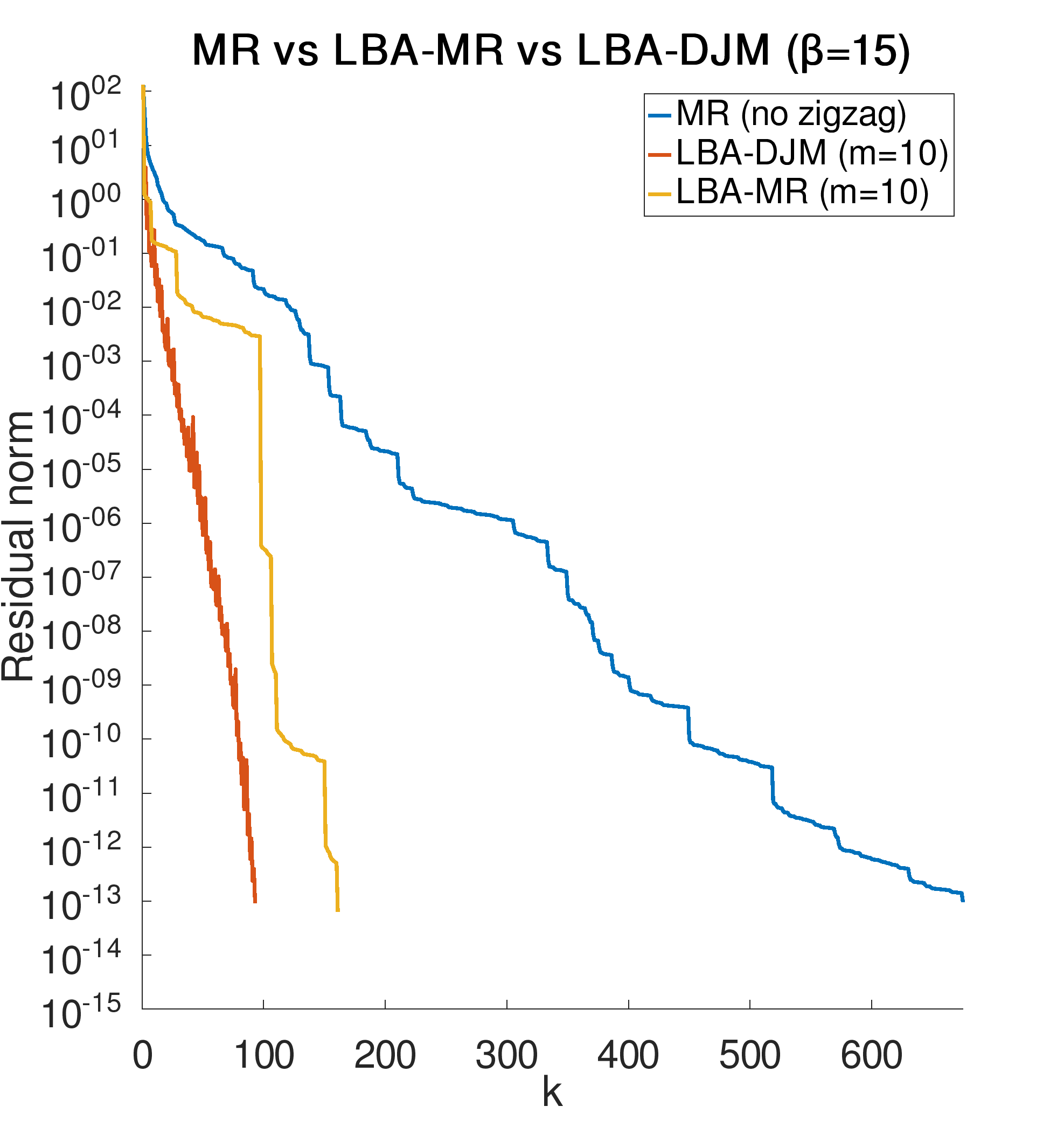}\hfill{~}}
    \caption{Comparison of the relaxed MR (no zigzag) with LBA ($\sigma=0.8$, $\epsilon_{\rm eig}=0.8$ and $m=10$) when combined with MR as smoother (LBA algorithm) and with DJM smoother, for different values of $\beta$.}\label{figmglba}
\end{figure}

\clearpage
\section{Extension to non-quadratic convex functions}\label{sec:convex}

\subsection{Incorporating line searches}

In the previous section, we introduced the Lanczos based acceleration method and
illustrated it on the minimization of quadratic functionals. However, for such
problems, it is well know that the golden standard is the Conjugate Gradient
(CG) algorithm. The CG method is optimal in the case of minimizing  strictly
convex quadratic functions \cite{Saad00},  but beyond that case and without a
constant Hessian, it loses its main powerful features. In the non-quadratic
case, specially for convex functions, gradient-type schemes retain their
simplicity and main convergence properties, which make them appealing for
large-scale minimization problems. Indeed,  if the cost function $f$ is twice
continuously differentiable, the eigenvector-based accelerations discussed and
analyzed in Section  3 can  easily be extended to non-quadratic convex
functions. The residual $r_k$ can be computed as $-\nabla f(x_k)$ and the
matrix-vector product $Ar_k$ that appears at each iteration $k$ of all the
considered iterative schemes can be obtained, in principle, as $-\nabla^2 f(x_k)
\nabla f(x_k)$, where $\nabla^2 f(x_k)$ is the  Hessian of $f$ at $x_k$. In
practice, the use of the exact Hessian of $f$ is in most cases not feasible and
not convenient in the large-scale case. Fortunately,  the Hessian matrix at
$x_k$ is only required to build the matrix-vector product $\nabla^2 f(x_k) r_k =
-\nabla^2 f(x_k)g_k= -H_k g_k$, which arises when calculating the
step length $\alpha_k$ and in the $m$ internal Lanczos iterations to obtain $d_k$ during
the few iterations where Lanczos acceleration is activated.  This matrix-vector product
can be obtained  with high numerical accuracy using a finite difference
approximation that only requires an additional gradient evaluation:
\begin{equation}\label{approxAv}
  H_k g_k \approx (\nabla f(x_k + h g_k) - g_k)/h,
\end{equation}
where $h>0$ is a small number. In practice, using the exact Hessian, $\nabla^2
f(x_k)$, or the finite difference expression in (\ref{approxAv}) produces quite
similar convergence behavior for smooth convex functions.

We focus our attention on the extension of the Minimal  Residual (MR) method. To
the best of our knowledge, the MR method has not been extended or adapted to
minimize non-quadratic convex functions. It is worth noting that the
gradient-type method that has been widely extended and adapted to solve general
large-scale minimization problems is the SD method \cite{bertsekas99}. These
extensions of SD typically require some form of step size control or a line
search.

We are thus interested in integrating the considered extensions of the MR method
into a globalization technique which is as tolerant as possible, allowing the
pure methods to act as freely as possible to maintain the effective accelerated
behavior observed in the convex quadratic case. At the same time we expect that
it affects  the pure methods as little as possible, to  ensure their convergence
to stationary points (i.e., global minimizers in the convex case). The most
suitable option for large-scale problems that satisfy these two opposing
requirements is the use of undemanding line search (LS) globalization
strategies.

Let us recall that, in the convex quadratic case, our schemes in Section 3
compute the steplength $\alpha_k$  carrying out an exact line search based on
closed formulas. However, the selection of an exact step-size in the general
non-quadratic case is not  feasible, due to the nonlinearity of  $\nabla f(x)$.
Therefore, we propose to equip the considered methods with an inexact line
search. Specifically,  we propose to adapt a tolerant non-monotone LS, discussed
and  analyzed in \cite{LaCruz}, that avoids unnecessary additional backtrackings
mainly at the initial iterations, while maintaining the convergence properties,
and tending to a monotonic behavior of the scheme at the final stage of the
iterative process. Roughly speaking, we require at each iteration of any of the
considered schemes that
\begin{equation} \label{roughLS}
  f(x_k + \sigma \alpha_k d_k)  \leq  f(x_{k}) - \gamma (\sigma\alpha_k)^2
  \|g_k\|^2 + \eta_k,
\end{equation}
where $\gamma$ is a small positive number, $\eta_k >0$ is chosen such that
$\sum_{k\in\N}\eta_k \leq \eta <\infty$, and $\sigma\alpha_k>0$ is the
steplength   used to obtain the next iterate $x_{k+1}$.

In most cases, the search direction $d_k$ is  chosen as $d_k= -g_k$. In a few cases,
$d_k = V_my_m$ (see Algorithm~\ref{alg:LBA}, step 9) when the Lanczos acceleration is activated.
 Note that in these cases, the steplength $\sigma \alpha_k$ is equal to 1 and the Hessian-times-vector
 approximation (\ref{approxAv}) is used internally in the Lanczos algorithm. Since this approximation
 poses the potential risk of the obtained vector $d_k$ being  either unbounded with respect to the gradient or
 not leading to sufficient decrease, it is advisable to add a safeguard control to the direction vector obtained by Lanczos.
 Specifically, we ensure that  for a fixed and very small $\tilde{\tau}>0$,
\begin{equation} \label{safedk}
  \|d_k\|_2 \le (1 / \tilde{\tau}) \|g_k\|_2\;  \mbox{ and }\; d_k^T g_k \leq -\tilde{\tau}\|g_k\|_2^2.
\end{equation}
 If (\ref{safedk})  is not satisfied, it is best to avoid setting $d_k = V_my_m$. Instead,
 %% in order to guarantee that we use a well-defined
%% direction at every $k$ and
for safety, we set $d_k = -g_k$ (no zigzag MR) in this iteration as well.

%% YS major rewording above. Check

To extend the methods presented in Section 3 to the non-quadratic convex case, the relaxed
parameter $\sigma$ is chosen in $(0,1)$, and the first trial for $\alpha_k>0$ is
obtained as
\begin{equation} \label{safeMG}
 \alpha_k = \min(\max(\tilde{\tau}, \tilde{\alpha_k}), 1/\tilde{\tau}),
\end{equation}
where  $\tilde{\alpha_k}$  is calculated using the MR  formula
\begin{equation} \label{trialMG}
  \tilde{\alpha_k} = \frac{g_k^\top H_k g_k}{ g_k^\top H_k^2 g_k}=\frac{g_k^\top (H_k
    g_k)}{ (H_k g_k)^\top (H_k g_k)}.
\end{equation}

Notice that  the matrix-vector product $H_k g_k$ is computed using
(\ref{approxAv}) only once at every $k$ and used three times in (\ref{trialMG}).
The safeguard strategy (\ref{safeMG}) guarantees that the step length
 $\alpha_k$  remains in a large positive interval away from zero and finite.
We also note that the  term $\eta_k > 0$ is responsible  for ensuring that
inequality (\ref{roughLS}) will hold regardless of the quality obtained using approximation
(\ref{approxAv}), and the  forcing term $- \gamma (\sigma\alpha_k)^2  \|g_k\|^2$ provides
the arguments for proving global convergence. In practice, to determine such a
step-size $\sigma\alpha_k$, we  use a well-known backtracking strategy, starting
with $\alpha_k$ given by (\ref{safeMG}-\ref{trialMG}), since this quotient exploits the local
information of the objective function. Therefore, the idea is to backtrack from
that step value in case it is necessary to satisfy (\ref{roughLS}).

The backtracking process is based on a safeguarded quadratic interpolation.
Given $0<\sigma_1 < \sigma_2 <1$ and setting $\widehat{d}_k = \sigma \alpha_k
d_k$,  the  safeguarding procedure acts when the minimum of the one-dimensional
quadratic $q(\cdot)$, such that $q(0)=f(x_k)$, $q(\beta)=f(x_k + \beta
\widehat{d}_k)$, and $q'(0)= \nabla f(x_k)^\top \widehat{d}_k$, lies outside
$[\sigma_1 \beta, \sigma_2 \beta]$, in which case  the more conservative
bisection strategy is preferred. Given a sequence $\{\eta_k\}_{k\in\N}$ such
that $\eta_k >0$ for all $k$ and $\sum_{k\in\N}\eta_k \leq \eta <\infty$, the
complete line search procedure is now described as:

\begin{algorithm}[H]
\centering
\caption{ Line search (backtracking)} \label{alg:LS}
\begin{algorithmic}[1]
    \State Input:  $\sigma \in (0,1)$, $0< \gamma <1$,  $x_k$, $\alpha_k$,  $g_k$, $d_k$, and  $\eta_k >0$
    \State Set $\widehat{d}_k = \sigma \alpha_k d_k$,   $x_+ = x_k + \widehat{d}_k$,    $\delta = g_k^\top\widehat{d}_k$, and  $ \beta = 1$
    \While{$( f(x_+) > f(x_k) - \gamma (\beta \sigma \alpha_k)^2 \|g_k\|^2 + \eta_k)$}
    \State $\beta_{\rm temp} = - \frac{1}{2} \beta^2 \delta / (f(x_+) - f(x_k) - \beta \delta)$
    \If {$\beta_{\rm temp} \ge \sigma_1 \beta \mbox{ and } \beta_{\rm temp} \le \sigma_2 \beta$}
    \State $\beta = \beta_{\rm temp}$
    \Else
    \State $\beta = \beta / 2$
    \EndIf
    \State  $ x_+ = x_k + \beta \widehat{d}_k$
    \EndWhile
    \State  Output: $\sigma\alpha_k \gets \beta \sigma \alpha_k\;$ \mbox{ and } $\;x_{k+1} \gets x_+$
\end{algorithmic}
\end{algorithm}

Regarding the algorithms in this section, we incorporate the LS (backtracking)
described above into each of the iterative schemes considered in Section 3 to
solve (\ref{genprb}) in the convex case. To be precise, at each algorithmic step
where a new iterate is computed, the candidate point passes through the LS just
before the assignment to obtain a step length $\sigma\alpha_k$  that guarantees
that (\ref{roughLS}) holds. When the assignment is of type $x_{k+1}= x_k - \sigma
\alpha_k g_k$, then $d_k = -g_k$ and the trial step length is $\sigma \alpha_k$.
When it is of type $x_{k+1}= x_k+V_m y_m$ then $d_k= V_m y_m$ and the trial step
length is set to 1.

Concerning the line search above, a few remarks are in order.
Once the step lengths $\alpha_k$   and directions $d_k$ are well-defined for all $k$,
 regardless of their quality,  the presence of the fixed and positive term $\eta_k$
   on the right side of  (\ref{roughLS}), and the continuity of the
  objective function $f$, ensures that the inequality will be satisfied after a finite number
  of backtrackings at every iteration.  The systematic reduction of $\beta$ (between steps 3 and
  11 of Algorithm 4)  in each backtracking, with all  other terms fixed and well-defined on both sides
  of inequality (\ref{roughLS}),  ensures that the  term  $\gamma (\beta \sigma \alpha_k)^2 \|g_k\|_2^2$
   approaches zero. Therefore, the right-hand side of the inequality  approaches  $f(x_k) + \eta_k$.
   Meanwhile, the left side of the inequality at Step 3   approaches $f(x_k)$ due to $f$'s continuity.
   Hence, since $\eta_k$ on     the right hand side remains fixed and positive, the inequality  will
   be satisfied for a sufficiently small value of $\beta$, i.e., after a finite number of  reductions of $\beta$.
We also note that in the case of rejection of the first trial point, the next ones
are computed along the same search direction. As a consequence, the gradient
function is evaluated only once during the backtracking process. Finally, since
the sequence $\{x_k\}_{k\in\N}$ generated by any of the considered extended
algorithms to solve (\ref{genprb}) satisfies (\ref{roughLS}), it follows that
for all $k\geq 0$
\[
  f(x_{k+1})  \leq  f(x_{k}) - \gamma (\sigma\alpha_k)^2 \|g_k\|^2 + \eta_k.
\]
Therefore, we extract the following results directly from the theoretical
results in \cite{LaCruz}:
\begin{itemize}
  \item  (Proposition 2.1 in \cite{LaCruz}) The sequence $\{x_{k}\}_{k\in\N}$
    generated by the algorithm is contained in $\Omega_0$, given by $ \Omega_0 =
    \{x\in \R^n\::\: f(x)\leq f(x_0) +\eta\}$. Notice that $\Omega_0$ is closed
    and bounded above. If $\Omega_0$ is also bounded below then it is a compact
    set, which happens, for example, if $f$ is strictly convex.
  \item (Theorem 2.1 in \cite{LaCruz})  If  $\Omega_0$ is a compact set, then
  $\lim_{k\rightarrow\infty} \|g_k\|=0. $ Notice that if $f$ is strictly
    convex, then by the continuity of $\nabla f(x)$ the previous result implies
    that $x_k \rightarrow x^*$, the unique global minimizer of $f$.
\end{itemize}

\subsection{Numerical experiments on strictly convex functions}

Using the extended algorithms discussed above, we now illustrate the different methods and acceleration strategies on different convex functions, all defined on $\R^n$ with $n=1000$ and with initial guesses and parameters kept constants for all tests:
\begin{description}
  \item[Strictly convex 2:] $\ds \R^n \ni x \mapsto f(x) = \sum_{i=1}^n
    \frac{i}{10}\big(\exp(x_i) - x_i)$
    with $x_0$ given by random coordinates in the interval $[0,3]$.
  \item[Logistic loss:]  $\ds \R^n \ni x \mapsto f(x) =
    \frac{0.1}{2}\norm{x}^2 + \sum_{i=1}^p \log\big( 1 + \exp(- (x^\top z_i)
    y_i)\big)$ with $p = 100 < n$ and $x_0$ the vector of all ones. $z_i\in\R^n$ are random vectors and $y_i$ are randomly chosen as $\pm1$.
    \item[Convex log-sum-exp:] $\ds \R^n \ni x \mapsto f(x) = \log\left(1+\sum_{i=1}^n \frac{i}{10} \exp(x_i^2) \right)$
    with $x_0$ given by normal random coordinates, with mean $0$ and standard deviation $3$.
    \item[Negative entropy:] $\ds \R^n_{>0} \ni x \mapsto f(x) = \sum_{i=1}^n
    \frac{i}{10}x_i\log(x_i)$
    with $x_0$'s coordinates given by small random perturbations of the minimizer $(\frac1{e},\frac1{e},\dots,\frac1{e})$ to avoid leaving $\R^n_{>0}$.
\end{description}

Concerning  the approximation in (\ref{approxAv}), we set
\begin{equation*}
  h = \frac{10^{-5}}{\min \{1, \max\{10^{-3},10^5\|g_k\|\}\}} \in [10^{-5},10^{-2}].
\end{equation*}
Hence, $h=10^{-5}$ when $\|g_k\|\geq 10^{-5}$ and $h>10^{-5}$ otherwise. The
idea is to take $h$ larger if $\|g_k\|$ is too small, avoiding numerical
instabilities associated with very small steps $hg_k$. In practice, this
strategy turns out to be more effective than using a fixed $h$. For the line
search we set $\gamma = 10^{-4}$, we start with $\eta_0 =\|g_0\|$ and for $k\geq
1$ we set $\eta_k=\eta_0/k^{1.1}$ such that  $\sum_{k\in\N}\eta_k <\infty$.
For the safeguard strategies (\ref{safedk}) and (\ref{safeMG}),
we set $\tilde{\tau} =10^{-10}$. In
all cases, we stop the process when $\|g_k\| \leq \texttt{tol}$ where
$\texttt{tol} = 10^{-10}$.   Finally, the relaxation parameter to avoid the zigzag
effect is chosen as $\sigma =0.8$ and to detect an eigenvector approximation we
set  $\epsilon_{\rm eig} = 0.8$.

To further evaluate and illustrate the efficiency of LBA, we next compare it not only to MR and the eigenvector acceleration schemes but also to state-of-the-art ABBmin and LMSD non-monotone gradient-type methods,  for the global minimization of non-quadratic
convex functionals. Note that since our focus is on  gradient-type methods, we have chosen not to include (quasi-)Newton-type methods in this
 comparison.
\begin{itemize}
    \item We have chosen the low-cost, non-monotone ABBmin method (see \cite{drtz18, frassoldatiNewAdaptiveStepsize2008, Zhou06}) because
    it has proven
    to be the most effective and competitive accelerated version of the Barzilai-Borwein (BB) method for smooth unconstrained
    optimization; see, e.g.,  \cite{CheRay20, ddrt13, drtz18}. For comparison, we used the globalized version with a non-monotone line search
    described in \cite{drtz18}, where global convergence  is established  to critical points of general smooth functions.

    \item LMSD, initially introduced in \cite{fletcherLimitedMemorySteepest2012}, has been chosen since it also makes use of the Lanczos process, in a different manner than LBA, to produce steplengths as the inverse of Ritz values. For comparison, we use the extended LMSD from \cite{ferrandiLimitedMemoryGradient2025}, which has a freely available Python implementation\footnote{Downloaded from \url{https://github.com/gferrandi/lmsdpy} on May 28th, 2026.}. LBA variants have been compared to the default LMSD (Fletcher's) method with BB1 stepsizes.
\end{itemize}
Let us emphasize that
ABBmin and LMSD require the  additional storage of  one and $m$ previous gradient vectors, respectively.
Although the implementation of LBA shown in Algorithm \ref{alg:LBA} appears to also require $m$ additional vectors of storage, we point out that the alternative implementation based on CR discussed in Remark~\ref{rem:Impl} requires only a few additional  vectors of storage.
Below, we consider history of sizes $5$ or $10$ when comparing performances.

First, recall that in Section \ref{adplncz} we discussed  two variants of  adaptive LBA. We tested these variants on the selected convex functions for several dimensions and found that fixing the number of Lanczos steps $m$ consistently produced better results than the adaptive variants. Hence, Table \ref{tab:comparison_nonquad} presents the performance of the various algorithms and LBA with different values of $m$ ($4 \leq m \leq 10$) for each of the four convex functions above. Based on these results, it seems that the choice of $m$ has less influence than for the quadratic case, even though fixing $m=8$ or $m=10$ seem to be the best choices overall. Additionally, Figure \ref{fig:plots_nonquad} shows the behavior of the various algorithms along the iterations. The superiority of LBA over MR and the eigenvector acceleration schemes in terms of gradient evaluations is clearly observed. However, it is striking that, if LBA competes with LMSD and ABBmin in therms of iterations, it requires about twice the number of gradient evaluations to reach the same accuracy. This can be explained by (i) LBA requiring two gradient evaluations per iterations in order to use the finite differences approximation of the Hessian (instead of one for LMSD and ABBmin) and (ii) LBA using additional gradient evaluations whenever the Lanczos acceleration is triggered.
Nevertheless, LBA has one important advantage over non-monotonic algorithms like LMSD and ABBmin: it does not need information or
storage of past iteration data. This makes LBA particularly efficient for large-scale optimization, especially for stochastic problems where the objective
function relies on random samples that change with each iteration; see \cite{Bottou}. A gradient-type method that uses only current
information is ideal for these types of problems because it avoids inconsistencies from outdated historical data.
% Still, the advantage of LBA over LMSD and ABBmin is two-fold: (i) it does not require storing data from previous iterations, which plays
% in its favor for large-scale optimization problems, and (ii) its residuals are, by construction, decreasing along the iterations.
Finally, it is worth noting that, even when backtracking is not activated, the line search strategy is essential to ensure convergence.

\begin{table}[p!]
    \centering
    \begin{tabular}{@{}cccccccccccccc@{}}
    \toprule
    \multicolumn{2}{c}{}                        &  & \multicolumn{5}{c}{\textbf{Strictly convex 2}}            &          & \multicolumn{5}{c}{\textbf{Logistic loss}}    \\
    \multicolumn{2}{c}{}                        &  & iter.       & grad.     & Lan.      & $t$       & backtr. &          & iter.        & grad.    & Lan.   & $t$     & backtr.  \\ \midrule
    \multicolumn{2}{c}{\textbf{MR (no zigzag)}} &  & 786         & 1573      & n.a.      & 17.1      & 0       &          & 94           & 189      & n.a.   & 0.50    & 0  \\ \midrule
    \multicolumn{2}{c}{\textbf{eigvec. accel.}} &  & 782         & 1565      & n.a.      & 16.9      & 0       &          & 97           & 195      & n.a.   & 0.48    & 0  \\ \midrule
    \multirow{5}{*}{\textbf{LBA}}     & $m=4$   &  & 373         & 799       & 13        & 8.5       & 0       &          & 31           & 155      & 23     & 0.32    & 0  \\
                                      & $m=5$   &  & 430         & 936       & 15        & 9.8       & 0       &          & 31           & 178      & 23     & 0.35    & 0  \\
                                      & $m=6$   &  & 400         & 891       & 15        & 9.2       & 0       &          & 28           & 171      & 19     & 0.36    & 0  \\
                                      & $m=8$   &  & 405         & 923       & 14        & 9.5       & 0       &          & 20           & 153      & 14     & 0.28    & 0  \\
                                      & $m=10$  &  & 373         & 877       & 13        & 8.9       & 0       &          & 18           & 157      & 12     & 0.28    & 0  \\ \midrule
    \multirow{2}{*}{\textbf{ABBmin}}  & $m=5$   &  & 364         & 365       & n.a.      & 4.0       & 3       &          & 84           & 85       & n.a.   & 0.22    & 0  \\
                                      & $m=10$  &  & 364         & 365       & n.a.      & 4.0       & 3       &          & 84           & 85       & n.a.   & 0.24    & 1  \\ \midrule
    \multirow{2}{*}{\textbf{LMSD}}    & $m=5$   &  & 446         & 448       & n.a.      & n.a.      & n.a.    &          & 72           & 74       & n.a.   & n.a.    & n.a.  \\
                                      & $m=10$  &  & 456         & 458       & n.a.      & n.a.      & n.a.    &          & n.c.         & n.c.     & n.a.   & n.a.    & n.a.  \\ \bottomrule
                                      &         &  &             &           &           &           &         &          &              &          &        &         & \\ \toprule
                                      &         &  & \multicolumn{5}{c}{\textbf{Convex log-sum-exp}}          &           & \multicolumn{5}{c}{\textbf{Negative entropy}} \\
                                      &         &  & iter.       & grad.     & Lan.      & $t$      & backtr. &           & iter.        & grad.    & Lan.   & $t$    & backtr.    \\ \midrule
    \multicolumn{2}{c}{\textbf{MR (no zigzag)}} &  & 540         & 1081      & n.a.      & 16.9     & 0       &           & 526          & 1053     & n.a.   & 11.3   & 0   \\ \midrule
    \multicolumn{2}{c}{\textbf{eigvec. accel.}} &  & 425         & 851       & n.a.      & 13.0     & 0       &           & 519          & 1039     & n.a.   & 11.2   & 0   \\ \midrule
    \multirow{5}{*}{\textbf{LBA}}     & $m=4$   &  & 268         & 609       & 18        & 9.3      & 0       &           & 293          & 619      & 8      & 6.4    & 0   \\
                                      & $m=5$   &  & 280         & 626       & 13        & 9.5      & 0       &           & 343          & 722      & 7      & 7.6    & 0   \\
                                      & $m=6$   &  & 300         & 685       & 14        & 10.3     & 0       &           & 250          & 549      & 8      & 5.7    & 0   \\
                                      & $m=8$   &  & 276         & 665       & 14        & 9.9      & 0       &           & 214          & 477      & 6      & 4.9    & 0   \\
                                      & $m=10$  &  & 257         & 645       & 13        & 9.5      & 0       &           & 254          & 569      & 6      & 5.8    & 0   \\ \midrule
    \multirow{2}{*}{\textbf{ABBmin}}  & $m=5$   &  & 298         & 299       & n.a.      & 4.7      & 9       &           & 259          & 260      & n.a.   & 3.0    & 14   \\
                                      & $m=10$  &  & 342         & 343       & n.a.      & 5.4      & 8       &           & 235          & 236      & n.a.   & 2.7    & 12   \\ \midrule
    \multirow{2}{*}{\textbf{LMSD}}    & $m=5$   &  & 489         & 491       & n.a.      & n.a.     & n.a.    &           & 440          & 442      & n.a.   & n.a.   & n.a.   \\
                                      & $m=10$  &  & 366         & 368       & n.a.      & n.a.     & n.a.    &           & 275          & 277      & n.a.   & n.a.   & n.a.   \\ \bottomrule
    \end{tabular}
    \caption{Performance of the LBA algorithm ($\sigma=0.8$ and $\epsilon_{\rm eig}=0.8$) for the global optimization of various convex, non-quadratic, functionals and for different values of $m$, the fixed number of Lanczos steps when activated. For ABBmin and LMSD, $m$ is the size of the memory. \enquote{iter.} (resp. \enquote{grad.} / \enquote{Lan.} / \enquote{$t$} / backtr.) stands for the number of iterations to reach convergence (resp. the number of gradient evaluations / the number of activations of the Lanczos algorithm / the CPU time in seconds / the number of bactracking activations). \enquote{n.a.} meens \enquote{not applicable} and is used either for the number of Lanczos activation for algorithms other than LBA or for timings / backtr. for the LSMD algorithm which was run using Python while all other algorithms are run with Octave (and backtr. history not available). \enquote{n.c.} means \enquote{not converged} and is used when $\|g_k\| \leq 10^{-10}$ is not reach before a maximum number of iterations. Note that none of the LBA variants required backtrackings.}
    \label{tab:comparison_nonquad}
\end{table}

\begin{figure}[p!]
    \centering
    \includegraphics[width=0.45\linewidth]{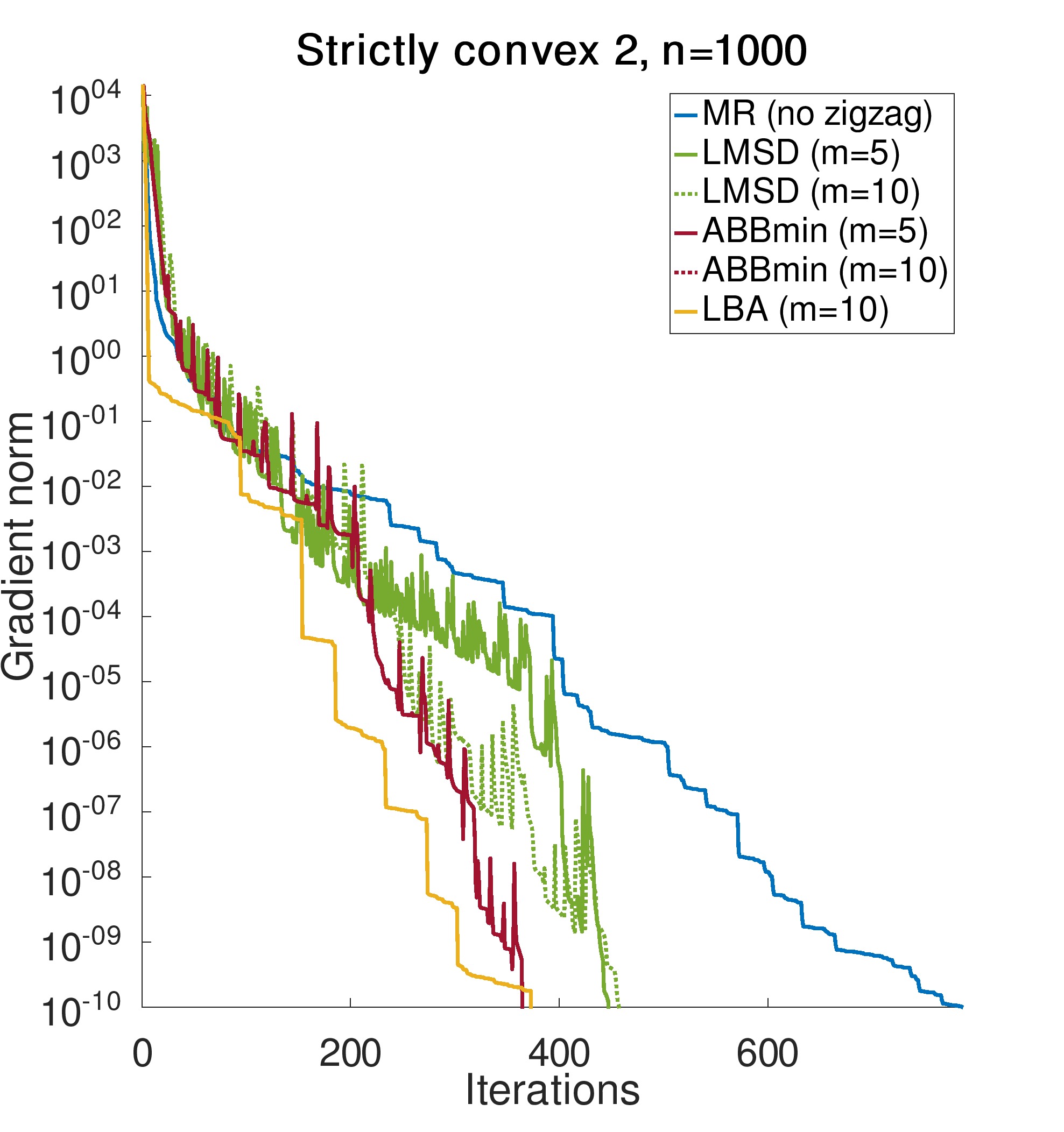}\hfill
    \includegraphics[width=0.45\linewidth]{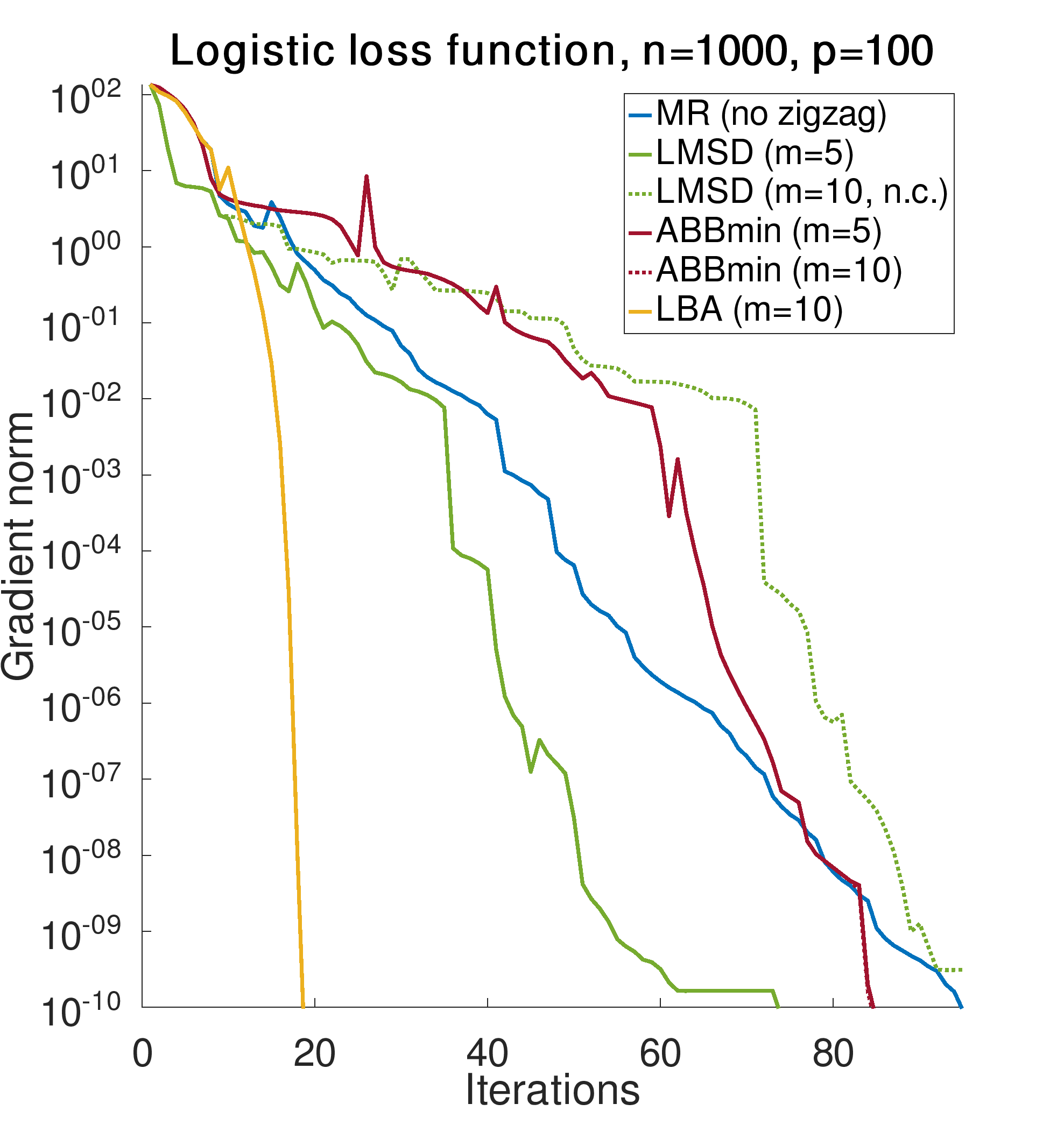}\\
    \includegraphics[width=0.45\linewidth]{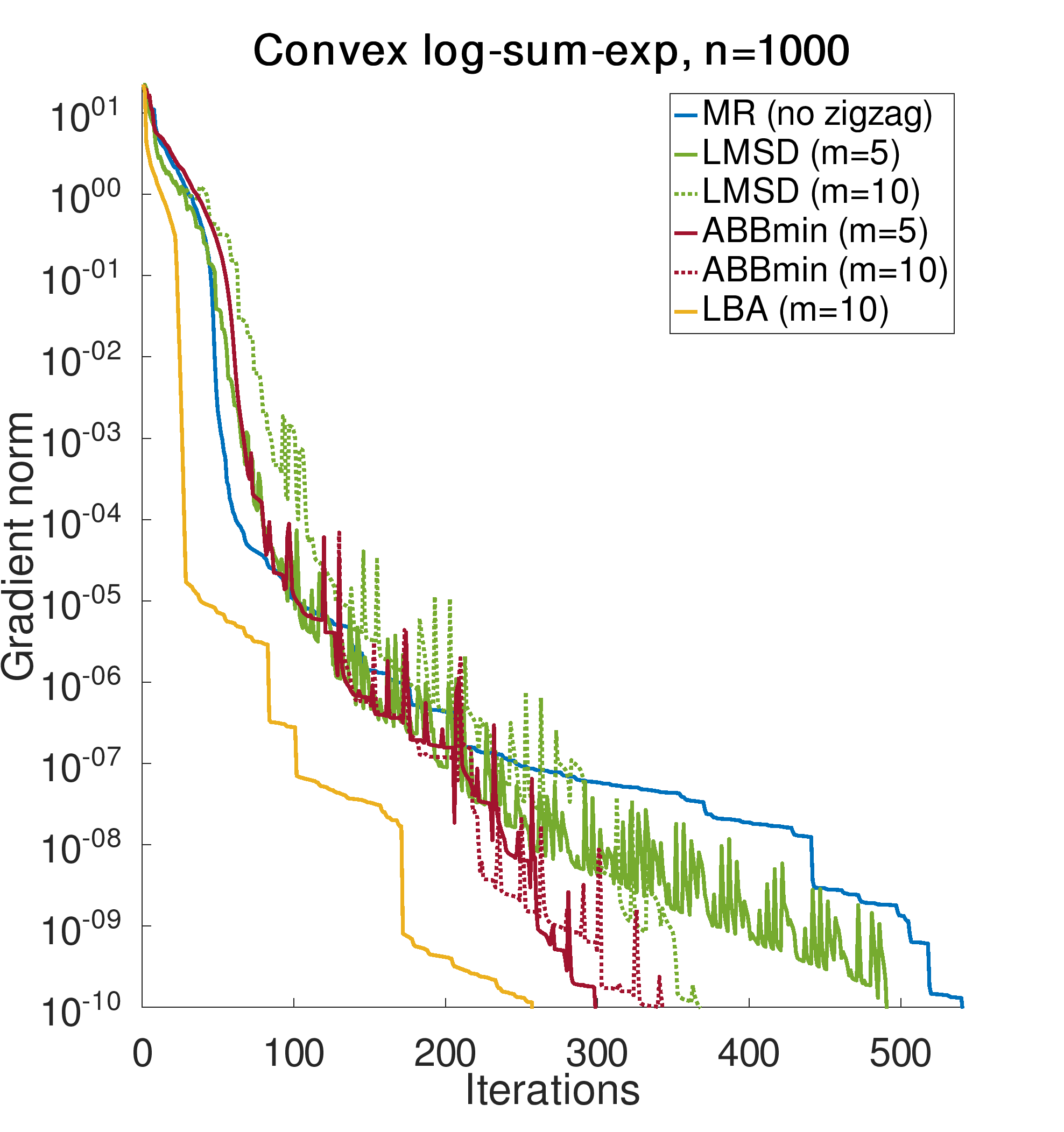}\hfill
    \includegraphics[width=0.45\linewidth]{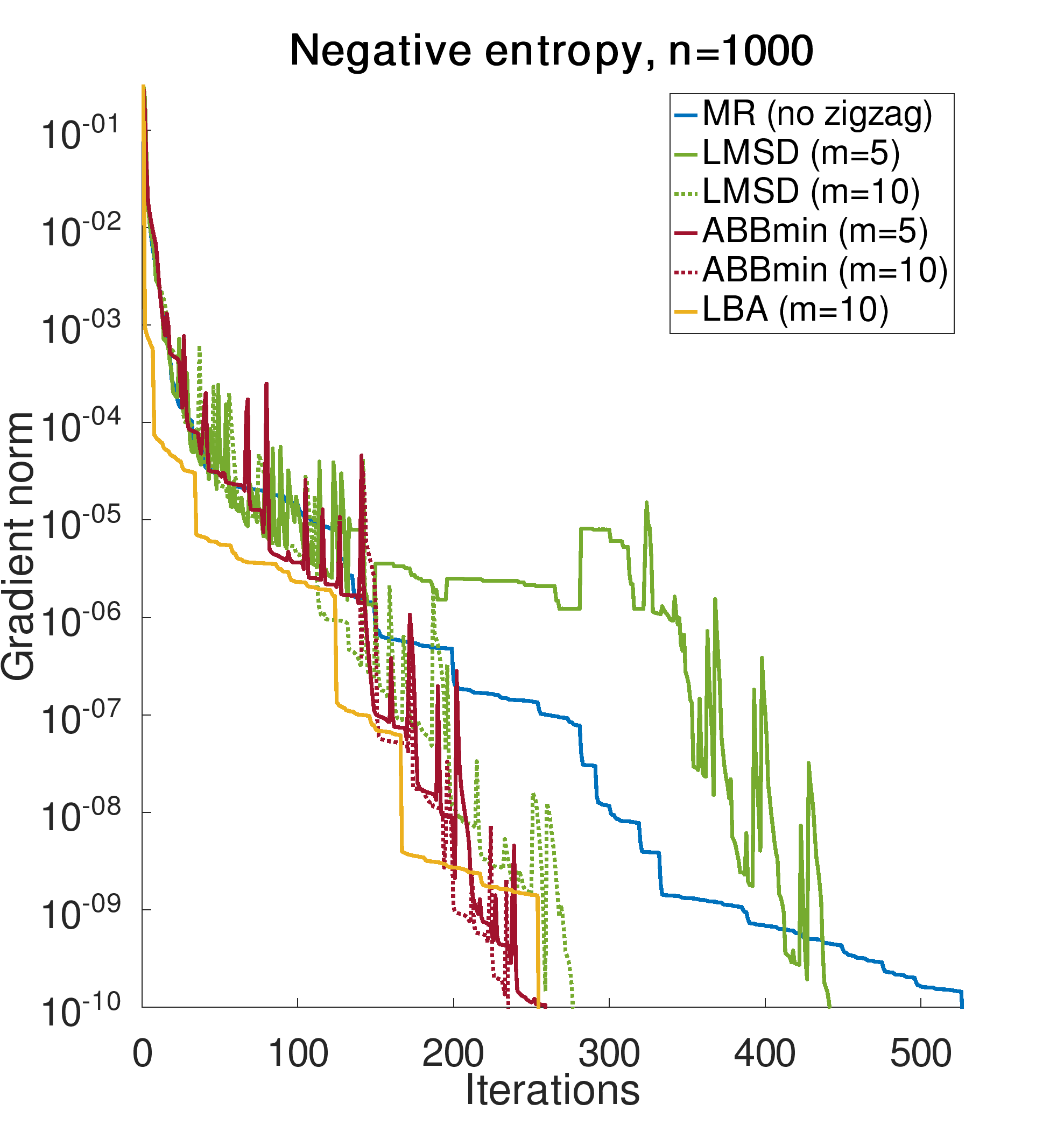}\\
    \caption{Convergence plots for various convex, non-quadratic, functionals. In terms of iterations, the LBA algorithm  ($\sigma=0.8$, $\epsilon_{\rm eig}=0.8$ and $m=10$) competes with state of the art ABBmin and LMSD algorithms (where $m$ is the history size).} % In terms of computational efficiency, despite requiring about twice the number of gradient evaluations to converge, the LBA algorithm
    % requires storing only a few additional vectors of length $n$.}
    \label{fig:plots_nonquad}
\end{figure}

\begin{remark}[Real-world dataset]
    To further illustrate the efficiency of LBA, we used it with $m=10$ for the logistic loss function and the \texttt{mushrooms} dataset from LIBSVM \cite{CC01a}\footnote{Downloaded from \url{https://www.csie.ntu.edu.tw/~cjlin/libsvmtools/datasets/binary.html\#mushrooms} on June 15th, 2026.}. The parameters are $n=112$, $p=8124$ and the conclusions are similar to those from Table \ref{tab:comparison_nonquad} and Figure \ref{fig:plots_nonquad}, see Figure \ref{fig:champi}.

    \begin{figure}[p!]
        \begin{minipage}{0.48\textwidth}
            \includegraphics[width=\linewidth]{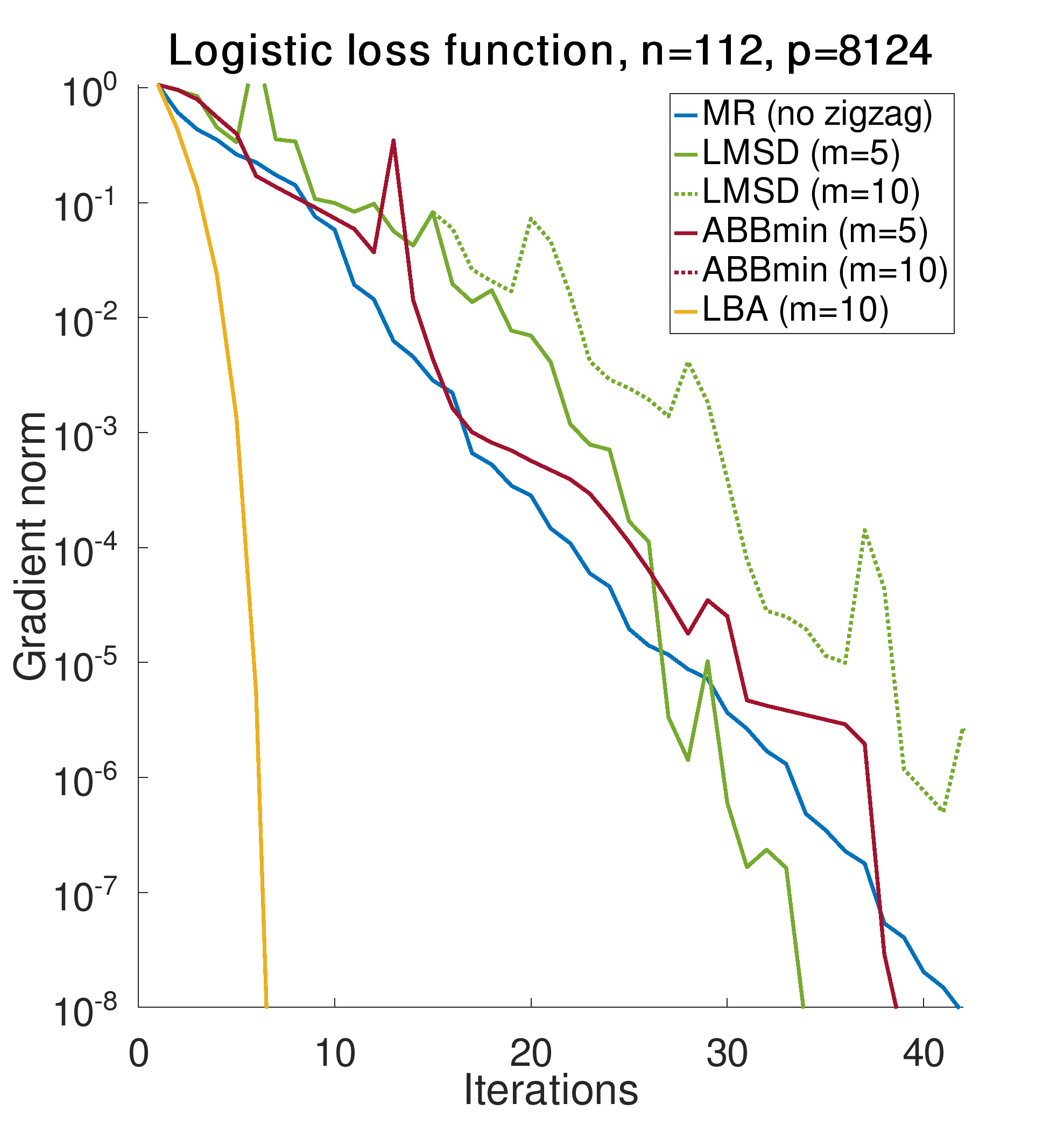}
        \end{minipage}\hfill
        \begin{minipage}{0.48\textwidth}
            \normalfont
            \centering
            \begin{tabular}{@{}ccccccc@{}}
            \toprule
            \multicolumn{2}{c}{}                        &  & \multicolumn{4}{c}{\textbf{Logistic loss}}             \\
            \multicolumn{2}{c}{}                        &  & iter.        & grad.        & Lan.        & $t$        \\ \midrule
            \multicolumn{2}{c}{\textbf{MR (no zigzag)}} &  & 41           & 83           & n.a.        & 18.6       \\ \midrule
            \textbf{LBA}                      & $m=10$  &  & 6            & 76           & 6           & 10.6       \\ \midrule
            \multirow{2}{*}{\textbf{ABBmin}}  & $m=5$   &  & 38           & 39           & n.a.        & 7.6        \\
                                              & $m=10$  &  & 38           & 39           & n.a.        & 8.0        \\ \midrule
            \multirow{2}{*}{\textbf{LMSD}}    & $m=5$   &  & 32           & 34           & n.a.        & n.a.       \\
                                              & $m=10$  &  & 47           & 49           & n.a.        & n.a.       \\ \bottomrule
            \end{tabular}
        \end{minipage}
        \caption{Performance of LBA and other algorithms for the logistic loss convex functional with parameters taken from the \texttt{mushrooms} LIBSVM dataset.}
        \label{fig:champi}
    \end{figure}
\end{remark}

\begin{remark}[Parameters choice]\label{rmk:param}
    There are two main parameters to set when using LBA with fixed $m$: $\sigma$ and $\epsilon_{\rm eig}$. The former is important to avoid the zigzag effect while the latter is crucial in the activation of the Lanczos acceleration.
    Regarding the choice of $\sigma$, as discussed in \cite[Section 8]{vandendoel12}, it is recommended to choose $\sigma\in (0,1)$ which has proved to be more effective. As for the choice of $\epsilon_{\rm eig}$, this is clearly a sensitive and problem dependent parameter: if chosen too small the acceleration might never be triggered, while if chosen too large the acceleration might be activated too often, meaning that the algorithm spends a lot of time applying the Lanczos procedure, even when the residuals are not sufficiently aligned with an eigendirection. We report in Table~\ref{tab:parameters} a test on these parameters for minimizing the quadratic functional from Section~\ref{sec:spectral} and for the (non-quadratric) functional Strictly convex 2. It appears that, for the two problems we considered, $\sigma=0.8$ is the best choice while $\epsilon_{\rm eig}$ should not be too small: choosing $\epsilon_{\rm eig}=0.6$ seems to give the best results, but $\epsilon_{\rm eig}=0.8$ is also satisfactory.

    \begin{table}[p!]
        \centering
        \begin{tabular}{@{}cccccc@{}}
        \toprule
        \multicolumn{6}{c}{\textbf{Quadratic functional}}                                                                                                                 \\ \midrule
                                              &                      & \multicolumn{4}{c}{$\sigma$}                                                                       \\
                                              &                      & 0.2                   & 0.4                   & 0.6                  & 0.8                         \\ \midrule
        \multirow{4}{*}{$\epsilon_{\rm eig}$} & 0.2                  & (577, 589, 1)    & (291, 303, 1)    & (198, 210, 1)   & (364, 395, 0)          \\
                                              & 0.4                  & (189, 223, 3)    & (192, 226, 3)    & (233, 267, 3)   & \textbf{(132, 155, 2)}          \\
                                              & 0.6                  & (200, 256, 5)    & (142, 187, 4)    & (199, 255, 5)   & \textbf{(132, 155, 2)} \\
                                              & 0.8                  & (166, 255, 8)    & (158, 225, 6)    & (164, 231, 6)   & {\it(122, 178, 5)}          \\ \midrule
        \multicolumn{1}{l}{}                  & \multicolumn{1}{l}{} & \multicolumn{1}{l}{}  & \multicolumn{1}{l}{}  & \multicolumn{1}{l}{} & \multicolumn{1}{l}{}        \\ \midrule
        \multicolumn{6}{c}{\textbf{Strictly convex 2}}                                                                                                                    \\ \midrule
                                              &                      & \multicolumn{4}{c}{$\sigma$}                                                                       \\
                                              &                      & 0.2                   & 0.4                   & 0.6                  & 0.8                         \\ \midrule
        \multirow{4}{*}{$\epsilon_{\rm eig}$} & 0.2                  & (696, 1413, 2)  & (892, 1795, 1)  & (665, 1341, 1) & (726, 1463, 1)        \\
                                              & 0.4                  & (788, 1607, 3)  & (567, 1155, 2) & (508, 1047, 3) & (468, 957, 2)         \\
                                              & 0.6                  & (590, 1301, 12) & (576, 1263, 11) & (449, 989, 9)  & \textbf{(273, 637, 9)} \\
                                              & 0.8                  & (393, 967, 18)   & (349, 859, 16)   & (362, 865, 14)  & {\it(298, 727, 13)}         \\ \bottomrule
        \end{tabular}
        \caption{Performance of LBA $(m=10)$ for several values of $\sigma$ and $\epsilon_{\rm eig}$. Results are presented as (iterations, matrix-vector products, calls to Lanczos) for the quadratic functional and as (iterations, gradient evaluations, calls to Lanczos) for the Strictly convex 2 functional. The quadratic functional is the same as in Section~\ref{sec:spectral} and LBA stops when $\|r_k\| \leq \|r_0\| \times 10^{-10}$. For the Strictly convex 2 functional, LBA stops when $\|g_k\| \leq 10 ^{-10}$. Shown in bold are the best parameter choices, and in italic those that we actually used throughout the paper, which are also satisfactory.}
        \label{tab:parameters}
    \end{table}
\end{remark}

\begin{remark}[Global optimization of quadratic functionals]
    We have also performed additional comparisons of LBA with ABBmin and LMSD for the quadratic functionals from Section~\ref{sec:spectral} that we do not report for the sake of brevity. It appears that, in the quadratic case, LBA performs similarly to ABBmin and LMSD both in terms of iterations and computational cost since LBA requires only one matrix-vector product per iteration (instead of two gradient evaluations for non-quadratic convex functionals).
\end{remark}

\begin{remark}[Reducing gradient evaluations]
    As mentioned above, the main reason why LBA requires about twice as many gradient evaluations as ABBmin and LMSD is because of the finite differences approximation of the Hessian. This issue can be bypassed by using similar tricks as those in \cite[Section 4.1]{heNlTGCRClassNonlinear2024a} where the updates are performed with only one gradient evaluations once a nearly linear regime is reached. This is left for future work.
\end{remark}

\section{Detailed analysis of relaxed gradient iterations}\label{sec:det_analysis}

The goal of this complementary section is to prove a number of results that
justify or illustrate the behavior observed numerically in Section~\ref{sec:quad}, that is the
concentration of the residuals on a few extreme eigenmodes when the relaxation
parameter $\sigma$ is chosen different than $1$. Again, we focus on the quadratic functional associated to a matrix $A$. The numerical examples illustrating this section use a $900 \times 900$ matrix $A$, obtained from the finite differences discretization of a Poisson problem on a $30 \times 30$ grid. We recall that we denote by $ \lambda_1 \le \lambda_2 \le \cdots \le \lambda_n $ the eigenvalues of $A$ and $u_1, u_2, \cdots, u_n$ an associated orthonormal set of eigenvectors.

\subsection{Preliminaries on shifted and scaled power methods}

We also recall that any gradient-type method can be seen as a scaled and shifted power method for the sequence of residuals $r_k = b - Ax_k = -g_k$ by writing
\begin{equation} \label{eq:PowerM2}
  r_{k+1}  =  r_k - \sigma \alpha_k Ar_k= (I-\sigma \alpha_kA)r_k=
  \left(\prod_{i=0}^k (I-\sigma \alpha_iA)\right)r_0.
\end{equation}
Next, if we define, for a given  nonzero vector $v$, the following Rayleigh quotients:
\begin{equation}
  \mu(A,v) \equiv \frac{ v^\top  A v }{ v^\top  v} , \qquad \nu(A,v) \equiv
  \frac{v^\top  A^2  v }{ v^\top  A v}, \label{eq:munu}
\end{equation}
then, the steplengths \eqref{eq:alpSD} and \eqref{eq:alpMR} are nothing else
than their inverses:
$ \alpha_k\up{SD}~=~\frac1{\mu(A,r_k)}$ and
$\alpha_k\up{MR}~=~\frac1{\nu(A,r_k)}$.
They can also be seen as special cases of the generalized inverse Rayleigh
quotient formula:
\begin{equation} \label{eq:Rayquo}
  \alpha_k = \frac{r_k^\top  A^{p-1} r_k}{ r_k^\top  A^p r_k},
\end{equation}
where $p \geq 1$ is an integer number. Clearly, SD corresponds to $p=1$ and MR
to $p=2$.

% GK: Not sure that it is relevant to mention Harmonic Rayleigh quotients since
% they are not used later on.
% If we set $w= Av$ then in the symmetric case  we have for  $\nu(A,v)$: \[
% \nu(A,v) = \frac{ w^\top  w }{ w^\top  A^{-1} w}  = \left[ \frac{ w^\top
% A^{-1} w }{ w^\top  w} \right]\inv . \label{eq:nu2} \] This is an inverted
% standard Rayleigh quotient but for the vector $Av$ with respect to the matrix
% $A\inv$. It is not as well-known as the standard one, but it is related to
% Harmonic Ritz values, see, e.g., ~\cite{goossens99ritz,beattie98harmonic} and
% \cite[Chap. 3]{Templates-EIG}. If we use a one-dimensional projection method
% on the space spanned by $v$, then $\mu(A,v)$ would be the Ritz value
% \cite{Parlett-book,Saad2011} obtained from this method while $\nu_v$ would the
% Harmonic Ritz value. We will refer to $\nu(A,v)$  as the \emph{Harmonic
% Rayleigh Quotient} for $v$.

We now denote the components of $r_k$ in the eigenbasis of $A$ by $\beta_{1,k},
\dots, \beta_{n,k}$. If we normalize $r_k$ by its Euclidean norm for SD and the
$A$-norm for MR, the $(\beta_{i,k})_{i=1,\dots,n}$'s will satisfy
\eq{eq:normalize}
\text{(SD)}: \quad \sum_{i=1}^n \beta_{i,k}^2 =1 , \qquad \text{(MR)}: \quad
\sum_{i=1}^n \beta_{i,k}^2 \lambda_i = 1 .
\en
Under these conditions the scalars $(\beta_{i,k}^2)_{i=1,\dots,n}$ and
$(\lambda_i \beta_{i,k}^2)_{i=1,\dots,n}$ can be viewed as probability
distributions; and the SD and MR iterations can be viewed as processes that
transform these distributions: this approach is the one originally proposed by
Akaike~\cite{Akaike}. The Rayleigh quotients mentioned above can be viewed as the means of the
eigenvalues under these distributions, and thus we will use the notation:
\eq{eq:meanBet}
\text{(SD) :} \quad
\lbet = \mu(A,r_k) = \sum_{i=1}^n \beta_{i,k}^2 \lambda_i \ , \qquad
\text{(MR) :} \quad
\lbet = \nu(A,r_k) = \sum_{i=1}^n \beta_{i,k}^2 \lambda_i^2 .
\en
As convex combinations of the eigenvalues, $\sum_{i=1}^n \beta_{i,k}^2 \lambda_i
$ with $\sum_{i=1}^n \beta_{i,k}^2 = 1$ for SD and $\sum_{i=1}^n (\beta_{i,k}^2
\lambda_i) \lambda_i $ with $\sum_{i=1}^n \lambda_i^2 \beta_{i,k}^2 = 1$ for MR,
these weighted means will always belong to the interval $[\lambda_1,
\lambda_n]$. A gradient step (SD or MR) thus transforms the distribution
$(\beta_{i,k})_{i=1,\dots,n}$ into a new distribution
$(\beta_{i,k+1})_{i=1,\dots,n}$ associated to a new normalized residual
$r_{k+1}$, by applying a step of the shifted power method to the current
residual $r_k$, where the shift changes at each step -- see
equation~\eqref{eq:PowerM2}.

In both SD and MR, the new residual has the following components:
\eq{eq:betatil}
\beta_{i,k+1} = \frac{1}{s_k} \left( \frac{\lbet}{\sigma} -
  \lambda_i  \right) \beta_{i,k}, \qquad 1\le i \le n,
\en
where $s_k$ stands for a normalization factor, that depends on the iteration
$k$. The magnitude of each component $\beta_i$ is therefore `amplified' by
$| \frac{\lbet}{\sigma} - \lambda_i |/s_k$.
Since the mean $\lbet$ belongs to the interval $[\lambda_1,
\lambda_n]$, $\lbet/\sigma $ belongs to the interval
$[\frac{\lambda_1}{\sigma}, \frac{\lambda_n}{\sigma}]$.
The (un-normalized) amplification factors $|\frac{\lbet}{\sigma} -
\lambda_i|$ are illustrated in Figure~\ref{fig:amplif}.  As can be seen at every
step, the highest factor is either the one associated with $\lambda_1$ or the
one associated with $\lambda_n$. With the assumption that $\beta_{1,0}
\beta_{n,0} \ne 0$, it can therefore be seen that the shifted power method with
the shifts equal to the inverse of generalized Rayleigh quotients will tend to
produce a vector that has components only in the eigenvectors associated with
these eigenvalues. However, since $\lbet$ depends on $k$, the
dominating eigenvector might change from an iteration to the next. The following
results help to understand this phenomenon.

\begin{figure}[ht]
  \centerline{
    \includegraphics[width=0.45\textwidth]{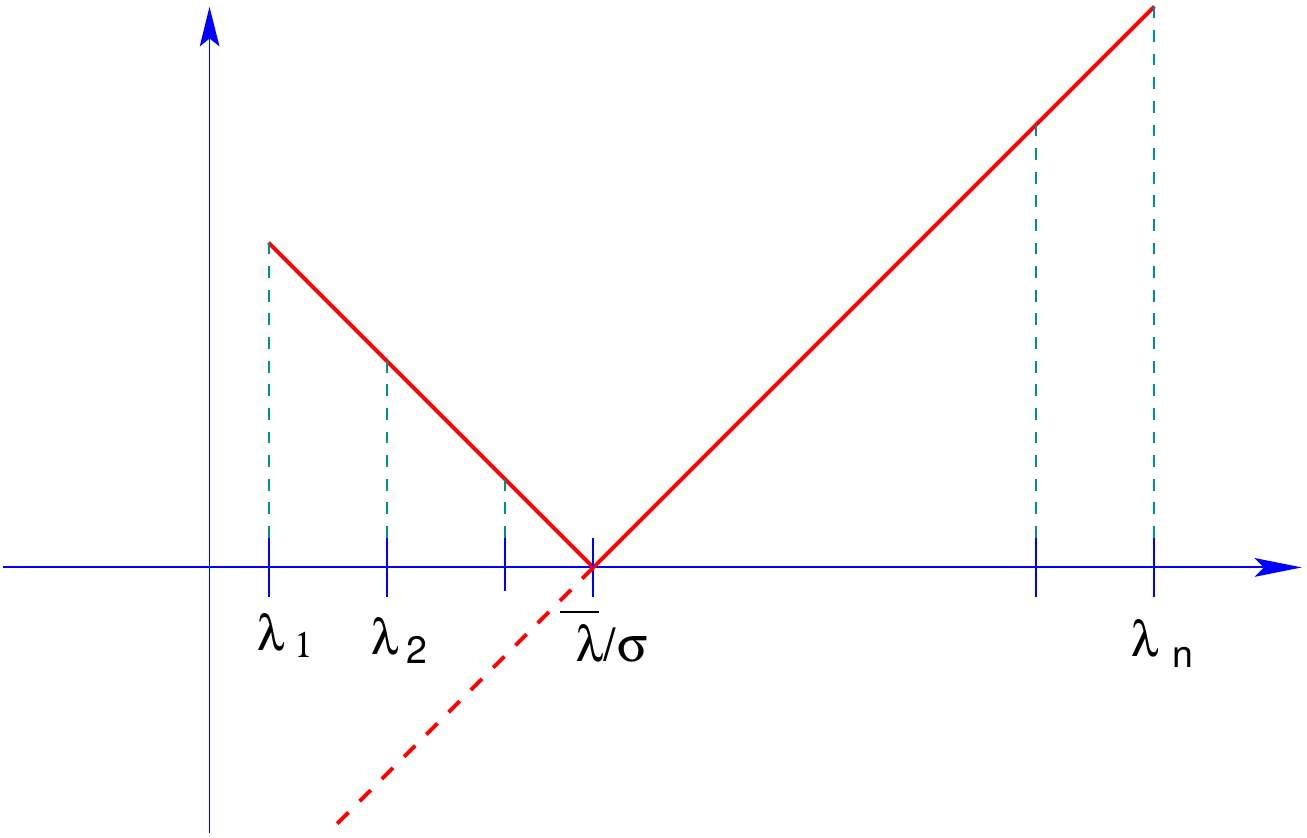} }
  \caption{Amplification  factors for relaxed SD / MR scheme}
  \label{fig:amplif}
\end{figure}

We define
\eq{eq:gamh}
  \xi_k \equiv \frac{\lbet}{\sigma}
\en
With this notation, \eqref{eq:betatil} corresponds to the shifted and scaled
power method
\eq{eq:SPower} r_{k+1} = \frac{1}{s_k} (\xi_k I - A) r_k .  \en
This is a very general form, and
the result shown next does not depend on how the shift $\xi_k$ is generated but
only assumes a shifted power iteration of the form \eqref{eq:SPower} where
$\xi_k$ is selected from the (open) interval $(\frac{\lambda_1}{\sigma},
\frac{\lambda_n}{\sigma})$ at each step.

\begin{lemma}\label{lem:SPower}
  Let $A\in\R^{n\times n}$ be a symmetric, positive definite matrix with
  eigenvalues $\lambda_1<\lambda_2\leq\dots\leq \lambda_{n-1}<\lambda_n$ and
  associated eigenvectors $(u_i)_{i=1,\dots,n}$. Consider the following shifted
  and scaled power method
  \begin{equation}\label{eq:SPower2}
    \begin{cases}
      r_0 \in \R^n,\quad \norm{r_0} = 1,\\
      r_{k+1} = \frac1{s_k}{(\xi_k I - A)r_k}.
    \end{cases}
  \end{equation}
  where $s_k$ is a normalization factor and $\xi_k$ is  selected at each step $k$
  from the interval $(\frac{\lambda_1}\sigma,\frac{\lambda_n}\sigma)$,
  with $\sigma\in(0,2)$.
  Let $(\beta_{i,k})_{k\in\N}$ be the $i$-th component of $r_k$.
  Assume that $\beta_{1,0}\beta_{n,0}\neq0$.
  Then, for $i=1,\dots,n$
  \begin{equation}\label{eq:limsup}
    \begin{cases}
      \ds\limsup_{k\to+\infty} \biggr(\prod_{j=0}^k\frac{|\xi_j - \lambda_i|}
      {|\xi_j-\lambda_1|}\biggr)^{1/k} < 1 \quad\Rightarrow\quad
      \lim_{k\to+\infty}\frac{|\beta_{i,k}|}{|\beta_{1,k}|} = 0\\
      \ds\limsup_{k\to+\infty} \biggr(\prod_{j=0}^k\frac{|\xi_j - \lambda_i|}
      {|\xi_j-\lambda_n|}\biggr)^{1/k} < 1 \quad\Rightarrow\quad
      \lim_{k\to+\infty}\frac{|\beta_{i,k}|}{|\beta_{n,k}|} = 0.\\
    \end{cases}
  \end{equation}
\end{lemma}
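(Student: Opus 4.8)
The plan is to diagonalize the iteration and reduce it to $n$ decoupled scalar recurrences. Expanding $r_k = \sum_{i=1}^n \beta_{i,k} u_i$ in the eigenbasis and using $A u_i = \lambda_i u_i$, the vector step in \eqref{eq:SPower2} splits into $\beta_{i,k+1} = \frac{1}{s_k}(\xi_k - \lambda_i)\beta_{i,k}$ for each $i$. Iterating from $0$ to $k-1$ yields the closed form $\beta_{i,k} = \beta_{i,0}\prod_{j=0}^{k-1}\frac{\xi_j - \lambda_i}{s_j}$. The decisive point is that, upon forming the quotient $\beta_{i,k}/\beta_{1,k}$, every (unknown) normalization factor $s_j$ cancels, so that
\[
  \left|\frac{\beta_{i,k}}{\beta_{1,k}}\right| = \left|\frac{\beta_{i,0}}{\beta_{1,0}}\right|\prod_{j=0}^{k-1}\frac{|\xi_j - \lambda_i|}{|\xi_j - \lambda_1|}.
\]
Since $\beta_{1,0}\neq 0$ the prefactor is a finite constant, and the first implication in \eqref{eq:limsup} reduces to showing that $P_k := \prod_{j=0}^{k-1}|\xi_j - \lambda_i|/|\xi_j - \lambda_1|$ tends to $0$.

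Next I would recognize the hypothesis as exactly a root-test condition on the sequence $P_k$. Writing $L$ for the limit superior appearing in \eqref{eq:limsup} (an off-by-one shift in the product index is immaterial for the $k$-th root) and choosing any $\rho$ with $L < \rho < 1$, the definition of the limit superior provides an index $K$ beyond which $P_k^{1/k} < \rho$, hence $P_k < \rho^k$. As $\rho < 1$ this forces $P_k \to 0$, and therefore $|\beta_{i,k}/\beta_{1,k}| \to 0$, which is the first conclusion. The second implication then follows by the identical argument with $\lambda_1$ replaced by $\lambda_n$ and $\beta_{1,\cdot}$ replaced by $\beta_{n,\cdot}$, now invoking $\beta_{n,0}\neq 0$.

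The only delicate point — and the step I would treat most carefully — is the well-definedness of these quotients, i.e.\ that $\beta_{1,k}\neq 0$ (respectively $\beta_{n,k}\neq 0$) for every $k$. From the closed form this holds exactly when $\beta_{1,0}\neq 0$, which is assumed, together with $\xi_j\neq\lambda_1$ (respectively $\xi_j\neq\lambda_n$) for all $j$. The latter is already implicit in the statement, since $\lambda_1$ (respectively $\lambda_n$) sits in the denominator of the product whose limit superior is assumed to be finite and $< 1$; moreover, in the regime $\sigma\in(0,1)$ of primary interest one has $\xi_j > \lambda_1/\sigma > \lambda_1$, so the first denominator never vanishes. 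Beyond this bookkeeping the proof is elementary: the analytic content sits entirely in the formulation of the hypothesis through geometric means of the amplification ratios $|\xi_j-\lambda_i|/|\xi_j-\lambda_1|$, and once the factors $s_j$ cancel the conclusion is a direct application of the root test.
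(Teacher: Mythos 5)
Your proof is correct and follows essentially the same route as the paper's: expand in the eigenbasis, observe that the normalization factors $s_j$ cancel in the ratio $|\beta_{i,k}|/|\beta_{1,k}|$, and conclude via the root test from the limsup hypothesis. The only difference is that you make explicit the well-definedness of the quotients (i.e.\ $\xi_j \neq \lambda_1$) and the root-test step, both of which the paper leaves implicit.
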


\begin{proof}
  Let $i=1,\dots,n$ and consider the first case. Applying successively
  $(\xi_jI-A)$ for $j=0,\dots,k$ and taking the scalar product between
  $r_{k+1}$ and $u_i$, we get for any $k$, after cancellation of the
  normalization factors,
  \[
    \frac{|\beta_{i,k+1}|}{|\beta_{1,k+1}|} =
    \frac{\ds |\beta_{i,0}| \prod_{j=0}^k|\xi_j - \lambda_i|}
    {\ds |\beta_{1,0}| \prod_{j=0}^k|\xi_j - \lambda_1|} =
    \frac{|\beta_{i,0}|}{|\beta_{1,0}|}
    \prod_{j=0}^k \frac{|\xi_j - \lambda_i|}{|\xi_j - \lambda_1|}
  \]
  Since
  \[
    \limsup_{k\to+\infty} \biggr(\prod_{j=0}^k\frac{|\xi_j - \lambda_i|}
    {|\xi_j-\lambda_1|}\biggr)^{1/k} < 1
    \quad\Rightarrow\quad
    \lim_{k\to+\infty} \prod_{j=0}^k
    \frac{|\xi_j-\lambda_i|}{|\xi_j-\lambda_1|} = 0,
  \]
  we obtain the desired result. The other case is treated similarly.
\end{proof}

\begin{remark}
  The case where $\beta_{1,0} = 0$ or $\beta_{n,0} = 0$ can be
  easily handled: $\lambda_1$ should be replaced by the smallest eigenvalue
  $\lambda_i$ for for which $\beta_{i,0}\ne 0$ and $\lambda_n $ by the largest
  eigenvalue $\lambda_i$ for for which $\beta_{i,0}\ne 0$.
  Also, the proof can be easily extended to the situation where $\xi_k$ belongs to
  the closed interval $[\lambda_1/\sigma, \lambda_n/\sigma]$ instead of being restricted to
  $(\lambda_1/\sigma, \lambda_n/\sigma)$. Indeed, if $\xi_k = \lambda_1 / \sigma$ for a given step $k$
  then $\beta_{1,k+1} = 0$ per \eqref{eq:betatil}.
  Thus, we are in a situation similar to the one where $\beta_{1,0} =
  0$ and we can apply the same remedy: replace $\lambda_1$ by smallest
  eigenvalue  for which this does not happen. We can proceed similarly  when
  $\xi_k = \lambda_n$ for a given $k$.
\end{remark}

In other words, this lemma gives a sufficient condition on the choice of the
shift parameters for the normalized residuals $i$-th component
$(\beta_{i,k})_{k\in\N}$ to be negligible  compared to the two extremal modes.
In the first case in \eqref{eq:limsup}, the condition is equivalent to
\[
  \limsup_{k\to+\infty} \frac1k \sum_{j=0}^k \log\frac{|\xi_j -
    \lambda_i|}{|\xi_j-\lambda_1|} < 0,
\]
which means that, on average, $ \frac{|\xi_j - \lambda_i|}{|\xi_j-\lambda_1|}$
is smaller than $1$ but is not prevented from being larger than $1$ from time to
time. These considerations show that the behavior of the shifted and scaled
power method depends crucially on how the $\xi_k$'s are chosen inside
$(\frac{\lambda_1}{\sigma},\frac{\lambda_n}{\sigma})$. If they are chosen as independent and
identically distributed random variables, uniformly in
$(\frac{\lambda_1}{\sigma},\frac{\lambda_n}{\sigma})$, we can prove the following Lemma. For
a more general result on random steplengths when $\sigma=1$, we refer the
interested reader to \cite{kalousekSteepestDescentMethod2017}.

\begin{lemma}\label{lem:random}
  Let $(\xi_k)_{k\in\N}$ be a sequence of independent random variables,
  uniformly distributed in $(\frac{\lambda_1}{\sigma},\frac{\lambda_n}{\sigma})$ for
  $\sigma\in(0,2)$. Then, under the same setting as Lemma~\ref{lem:SPower},
  \begin{itemize}
    \item if $\sigma < 1$, for any $i\neq1$, $\ds\limsup_{k\to+\infty}
      \biggr(\prod_{j=0}^k\frac{|\xi_j - \lambda_i|}
      {|\xi_j-\lambda_1|}\biggr)^{1/k} < 1$. As a consequence, $|\beta_{i,k}|$
      converges to $0$ for all $i$'s but $i=1$ and
      $\lim_{k\to+\infty}|\beta_{1,k}|=1$.
    \item if $\sigma > 1$, for any $i\neq n$, $\ds\limsup_{k\to+\infty}
      \biggr(\prod_{j=0}^k\frac{|\xi_j - \lambda_i|}
      {|\xi_j-\lambda_n|}\biggr)^{1/k} < 1$. As a consequence, $|\beta_{i,k}|$
      converges to $0$ for all $i$'s but $i=n$ and
      $\lim_{k\to+\infty}|\beta_{n,k}|=1$.
    \item if $\sigma = 1$, for any $i\neq 1,n$, $\ds\limsup_{k\to+\infty}
      \biggr(\prod_{j=0}^k\frac{|\xi_j - \lambda_i|}
      {|\xi_j-\lambda_1|}\biggr)^{1/k} < 1$ and $\ds\limsup_{k\to+\infty}
      \biggr(\prod_{j=0}^k\frac{|\xi_j - \lambda_i|}
      {|\xi_j-\lambda_n|}\biggr)^{1/k} < 1$.
      As a consequence, $|\beta_{i,k}|$ converges to $0$ for all $i\neq1,n$
      and $|\beta_{1,k}|$ and $|\beta_{n,k}|$ cannot both converge to $0$.
  \end{itemize}
\end{lemma}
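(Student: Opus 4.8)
The plan is to reduce all three cases to a single deterministic computation of a logarithmic potential, using the strong law of large numbers (SLLN) to dispose of the randomness in the shifts, and then to feed the resulting condition into Lemma~\ref{lem:SPower}. First I would recall the identity established in the proof of Lemma~\ref{lem:SPower}: the quantities to control are $\bigl(\prod_{j=0}^{k}|\xi_j-\lambda_i|/|\xi_j-\lambda_\ell|\bigr)^{1/k}$ for $\ell\in\{1,n\}$. Writing this as $\exp\bigl(\tfrac1k\sum_{j=0}^k \log\frac{|\xi_j-\lambda_i|}{|\xi_j-\lambda_\ell|}\bigr)$ and using that the $\xi_j$ are i.i.d.\ uniform on $I=(\lambda_1/\sigma,\lambda_n/\sigma)$, the SLLN gives almost-sure convergence to $\exp\bigl(\Phi(\lambda_i)-\Phi(\lambda_\ell)\bigr)$, where I define the potential $\Phi(\lambda)=\mathbb{E}\bigl[\log|\xi-\lambda|\bigr]=\frac1{|I|}\int_I \log|t-\lambda|\,\d t$. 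The only point to verify before invoking the SLLN is integrability: each $\log|\xi-\lambda|$ has at worst a logarithmic, hence integrable, singularity on $I$, so $\mathbb{E}\,\bigl|\log|\xi-\lambda|\bigr|<\infty$ and the increments $\log\frac{|\xi-\lambda_i|}{|\xi-\lambda_\ell|}$ are integrable. Thus each statement reduces to determining the sign of $\Phi(\lambda_i)-\Phi(\lambda_1)$ and of $\Phi(\lambda_i)-\Phi(\lambda_n)$.

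Next I would analyze $\Phi$ directly. Computing the elementary integral and differentiating yields $\Phi'(\lambda)=\frac1{|I|}\log\frac{|\lambda-\lambda_1/\sigma|}{|\lambda-\lambda_n/\sigma|}$, which is negative below and positive above the midpoint $m\coloneqq (\lambda_1+\lambda_n)/(2\sigma)$ of $I$. Hence $\Phi$ is strictly decreasing on $(-\infty,m)$ and strictly increasing on $(m,+\infty)$; moreover the reflection $t\mapsto 2m-t$ shows $\Phi$ is symmetric about $m$, so $\Phi(\lambda)$ depends only on $|\lambda-m|$ and increases strictly with that distance. In particular, over the spectral interval $[\lambda_1,\lambda_n]$ the maximum of $\Phi$ is attained at one of the two endpoints, and which one is dictated solely by which of $\lambda_1,\lambda_n$ is farther from $m$.

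The comparison is then immediate from the position of $m$ relative to the center $(\lambda_1+\lambda_n)/2$ of the spectrum: since $m=(\lambda_1+\lambda_n)/(2\sigma)$, one has $m>(\lambda_1+\lambda_n)/2$ exactly when $\sigma<1$, $m<(\lambda_1+\lambda_n)/2$ when $\sigma>1$, and $m=(\lambda_1+\lambda_n)/2$ when $\sigma=1$. For $\sigma<1$ the point $\lambda_1$ is strictly farther from $m$ than $\lambda_n$, so $\Phi(\lambda_1)$ is the strict maximum of $\Phi$ on $[\lambda_1,\lambda_n]$ and $\Phi(\lambda_i)<\Phi(\lambda_1)$ for every $i\neq 1$; the first limsup in \eqref{eq:limsup} is therefore $<1$ and Lemma~\ref{lem:SPower} gives $|\beta_{i,k}|/|\beta_{1,k}|\to0$. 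The case $\sigma>1$ is symmetric, with $\lambda_n$ in place of $\lambda_1$. For $\sigma=1$ one has $\Phi(\lambda_1)=\Phi(\lambda_n)$ and, since any interior eigenvalue is strictly closer to $m$ than the endpoints, $\Phi(\lambda_i)<\Phi(\lambda_1)=\Phi(\lambda_n)$ for all $i\neq1,n$, so both limsups are $<1$ for interior modes.

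Finally I would translate the ratio limits into the stated conclusions via the normalization $\sum_i\beta_{i,k}^2=1$: since $|\beta_{1,k}|\le1$, the ratio limits force $|\beta_{i,k}|\to0$ for each vanishing mode, and the surviving squared components must sum to $1$; this gives $|\beta_{1,k}|\to1$ (resp.\ $|\beta_{n,k}|\to1$) when $\sigma<1$ (resp.\ $\sigma>1$), and for $\sigma=1$ it shows $\beta_{1,k}^2+\beta_{n,k}^2\to1$, so $\beta_{1,k}$ and $\beta_{n,k}$ cannot both vanish. The main obstacle I anticipate is the analysis of $\Phi$: making the monotonicity-plus-symmetry argument airtight, including the boundary cases where $m$ falls inside $[\lambda_1,\lambda_n]$ or where some $\lambda_i$ coincides with an endpoint of $I$, and securing the \emph{strict} inequalities for all intermediate modes rather than merely nonstrict ones, is where the real care is required; the SLLN step itself is routine once integrability is noted.
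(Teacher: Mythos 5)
Your proposal is correct and shares its overall skeleton with the paper's proof: both reduce the limsup condition to the sign of $\mathbb{E}\bigl[\log\tfrac{|\xi-\lambda_i|}{|\xi-\lambda_\ell|}\bigr]$ via the law of large numbers (after noting the integrability of the logarithmic singularity) and then invoke Lemma~\ref{lem:SPower} together with the normalization to pass from ratio limits to the stated conclusions. Where you genuinely diverge is in how the sign of that expectation is established. The paper evaluates the integral $\int_\alpha^\beta\log|\xi-\lambda_i|\,\d\xi$ in closed form (splitting into the cases $\lambda_i<\alpha$ and $\lambda_i\ge\alpha$), rewrites the expectation as a sum of three terms of the form $(\cdot)\log\frac{(\cdot)}{\beta-\lambda_1}$, and checks that each is negative from the ordering $\lambda_1<\alpha\le\lambda_i\le\lambda_n<\beta$; the case $\sigma=1$, $i=n$ then drops out as the expectation being exactly zero. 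You instead introduce the potential $\Phi(\lambda)=\mathbb{E}\log|\xi-\lambda|$, compute $\Phi'(\lambda)=\frac1{|I|}\log\frac{|\lambda-\alpha|}{|\lambda-\beta|}$, and deduce that $\Phi$ is a strictly increasing function of the distance $|\lambda-m|$ to the midpoint $m=(\lambda_1+\lambda_n)/(2\sigma)$; the three cases then follow uniformly from comparing $m$ with $(\lambda_1+\lambda_n)/2$, with $\sigma=1$ handled by the exact symmetry $\Phi(\lambda_1)=\Phi(\lambda_n)$. Your route buys a single unified argument with no casework on the position of $\lambda_i$ relative to $I$, and makes the qualitative mechanism (the extremal eigenvalue farthest from the biased midpoint wins) transparent; the paper's route buys an explicit formula for the expectation, which is what lets it exhibit the exact cancellation at $\sigma=1$, $i=n$ rather than inferring it from symmetry. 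Both are complete; the boundary issues you flag (where $m$ lands, $\lambda_i$ at an endpoint of $I$) are indeed absorbed by the monotonicity-in-distance formulation and do not constitute gaps.
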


% \begin{figure}[p!]
%   \centering
%   \includegraphics[width=0.9\linewidth]{./FIGS/amplif2.pdf}
%   \caption{Graphs showing how the location of $\xi_k$ impacts the components
%     of the normalized residuals. (Left) $\sigma<1$. (Right) $\sigma>1$.}
%   \label{fig:amplif2}
% \end{figure}

\begin{proof}
  We detail the case $\sigma<1$ (the case $\sigma > 1$ is treated similarly).
  Since $(\xi_k)_{k\in\N}$ is a sequence of random variables, the idea of the
  proof is to rely on the equivalence
  \begin{equation}\label{eq:equivalence}
    \limsup_{k\to+\infty}
    \biggr(\prod_{j=0}^k\frac{|\xi_j - \lambda_i|}
    {|\xi_j-\lambda_1|}\biggr)^{1/k} < 1
    \quad\Leftrightarrow\quad
    \limsup_{k\to+\infty} \frac1k \sum_{j=0}^k \log\frac{|\xi_j -
      \lambda_i|}{|\xi_j-\lambda_1|} < 0.
  \end{equation}
  By the law of large numbers, it is sufficient to compute
  $\ds
  \mathbb{E}\biggr[  \log\frac{|\xi - \lambda_i|}{|\xi-\lambda_1|}\biggr],
  $
  where $\xi$ is a random variable uniformly distributed in
  $(\frac{\lambda_1}{\sigma},\frac{\lambda_2}{\sigma})$,
  and show that it is negative for $i\neq1$. To this end, let us define
  $\alpha=\frac{\lambda_1}{\sigma}$ and $\beta=\frac{\lambda_n}{\sigma}$. Then,
  if $\lambda_1<\lambda_i<\alpha$,
  \[
    \forall\ \xi\in[\alpha,\beta],\quad
    \log\frac{|\xi-\lambda_i|}{|\xi-\lambda_1|} < 0
  \]
  and the expectation is strictly negative too. Otherwise,
  $\lambda_i\geq\alpha$ and
  \[
    \mathbb{E}\biggr[\log\frac{|\xi-\lambda_i|}{|\xi-\lambda_1|}\biggr] =
    \frac1{\beta-\alpha}\biggr(\int_\alpha^\beta \log|\xi-\lambda_i|\d \xi
    - \int_\alpha^\beta \log |\xi-\lambda_1|\d \xi\biggr).
  \]
  Since $\sigma<1$, $\lambda_1<\alpha$ and the second integral is
  \[
    \int_\alpha^\beta \log|\xi-\lambda_1|\d \xi =
    (\beta-\lambda_1)\log(\beta-\lambda_1) -
    (\alpha-\lambda_1)\log(\alpha-\lambda_1) - (\beta-\alpha).
  \]
  As $\lambda_i\geq\alpha$, the first integral can be decomposed into, with
  the convention that $0\log 0 = 0$,
  \[
    \begin{split}
      \int_\alpha^\beta \log|\xi-\lambda_i|\d \xi &=
      \int_\alpha^{\lambda_i} \log(\lambda_i-\xi)\d \xi +
      \int_{\lambda_i}^\beta \log(\xi-\lambda_i)\d \xi\\
      &=(\lambda_i-\alpha)\log(\lambda_i-\alpha) + (\alpha-\lambda_i)+
      (\beta-\lambda_i)\log(\beta-\lambda_i) - (\beta-\lambda_i)\\
      &=(\beta-\lambda_i)\log(\beta-\lambda_i) +
      (\lambda_i-\alpha)\log(\lambda_i-\alpha) - (\beta-\alpha)\\
    \end{split}
  \]
  Putting the two terms together yields
  \[
    \begin{split}
      \mathbb{E}\biggr[\log\frac{|\xi-\lambda_i|}{|\xi-\lambda_1|}\biggr]
      % &= \frac1{\beta-\alpha}\biggr((\beta-\lambda_i)\log(\beta-\lambda_i) +
      % (\lambda_i-\alpha)\log(\lambda_i-\alpha) \\
      % &\phantom{=\frac{1}{\beta-\alpha}\biggr(} - (\beta-\lambda_1)\log(\beta-\lambda_1)
      % + (\alpha-\lambda_1)\log(\alpha-\lambda_1) \biggr) \\
      % &= \frac1{\beta-\alpha}\biggr((\beta-\lambda_i)\log(\beta-\lambda_i) +
      % (\lambda_i-\alpha)\log(\lambda_i-\alpha) \\
      % &\phantom{=\frac{1}{\beta-\alpha}\biggr(} -
      % \Big(\beta-\lambda_i+\lambda_i-\alpha+\alpha-\lambda_1\Big)\log(\beta-\lambda_1)
      % + (\alpha-\lambda_1)\log(\alpha-\lambda_1) \biggr)\\
      &= \frac1{\beta-\alpha}\biggr(
      (\beta-\lambda_i)\log\frac{\beta-\lambda_i}{\beta-\lambda_1} +
      (\lambda_i-\alpha)\log\frac{\lambda_i-\alpha}{\beta-\lambda_1} +
      (\alpha-\lambda_1)\log\frac{\alpha-\lambda_1}{\beta-\lambda_1} \biggr).
    \end{split}
  \]
  Recall that $\lambda_1 < \alpha \leq \lambda_i \leq \lambda_n < \beta$.
  Hence, the three terms in the last sum are all negative: the expectation is
  therefore negative too. By \eqref{eq:equivalence} and
  Lemma~\ref{lem:SPower}, $\frac{|\beta_{i,k}|}{|\beta_{1,k}|}$ converge to
  $0$. Since $r_k$ is normalized for every $k$, we recall that $\sum_{j=1}^n
  |\beta_{j,k}|^2 = 1$: this implies that $|\beta_{i,k}|$ converges to $0$ for
  $i\neq1$ while $|\beta_{1,k}|$ converge to $1$.

  Next, if $\sigma=1$, then $\alpha=\lambda_1$, $\beta=\lambda_n$, and
  the calculation above shows that
  \[
    \begin{split}
      \mathbb{E}\biggr[\log\frac{|\xi-\lambda_i|}{|\xi-\lambda_1|}\biggr]
      &= \frac1{\lambda_n-\lambda_1}\biggr(
      (\lambda_n-\lambda_i)\log\frac{\lambda_n-\lambda_i}{\lambda_n-\lambda_1} +
      (\lambda_i-\lambda_1)\log\frac{\lambda_i-\lambda_1}{\lambda_n-\lambda_1}\biggr).
    \end{split}
  \]
  For $i=n$, the expectation is thus 0 and one cannot conclude on the
  convergence of $|\beta_{n,k}|$ towards 0. The calculations are similar in all
  the other cases.
\end{proof}

The previous Lemma shows that a sufficient condition for the residual to
concentrate on the extremal modes is reached when the $\xi_k$'s are uniformly
distributed inside $(\frac{\lambda_1}{\sigma},\frac{\lambda_n}{\sigma})$. This
result is illustrated in Figure~\ref{fig:test_sigma_random}, where we track the
components of the normalized residual in the eigenbasis for various $\sigma$.
For $\sigma < 1$, the dominating mode is $\beta_{1,k}$ after 100 iterations and
for $\sigma > 1$, the dominating mode is $\beta_{n,k}$. For $\sigma=1$, both
scenarios can happen. Showing that such a condition also holds in the case of
MR/SD seems a more difficult task and numerical experiments show that, when $\xi$
is computed as in the MR or SD algorithms, such a simple convergence behavior does not
seem to hold, see Figure~\ref{fig:test_sigma}.

\begin{figure}[p!]
  \centering
  \includegraphics[width=0.5\linewidth]{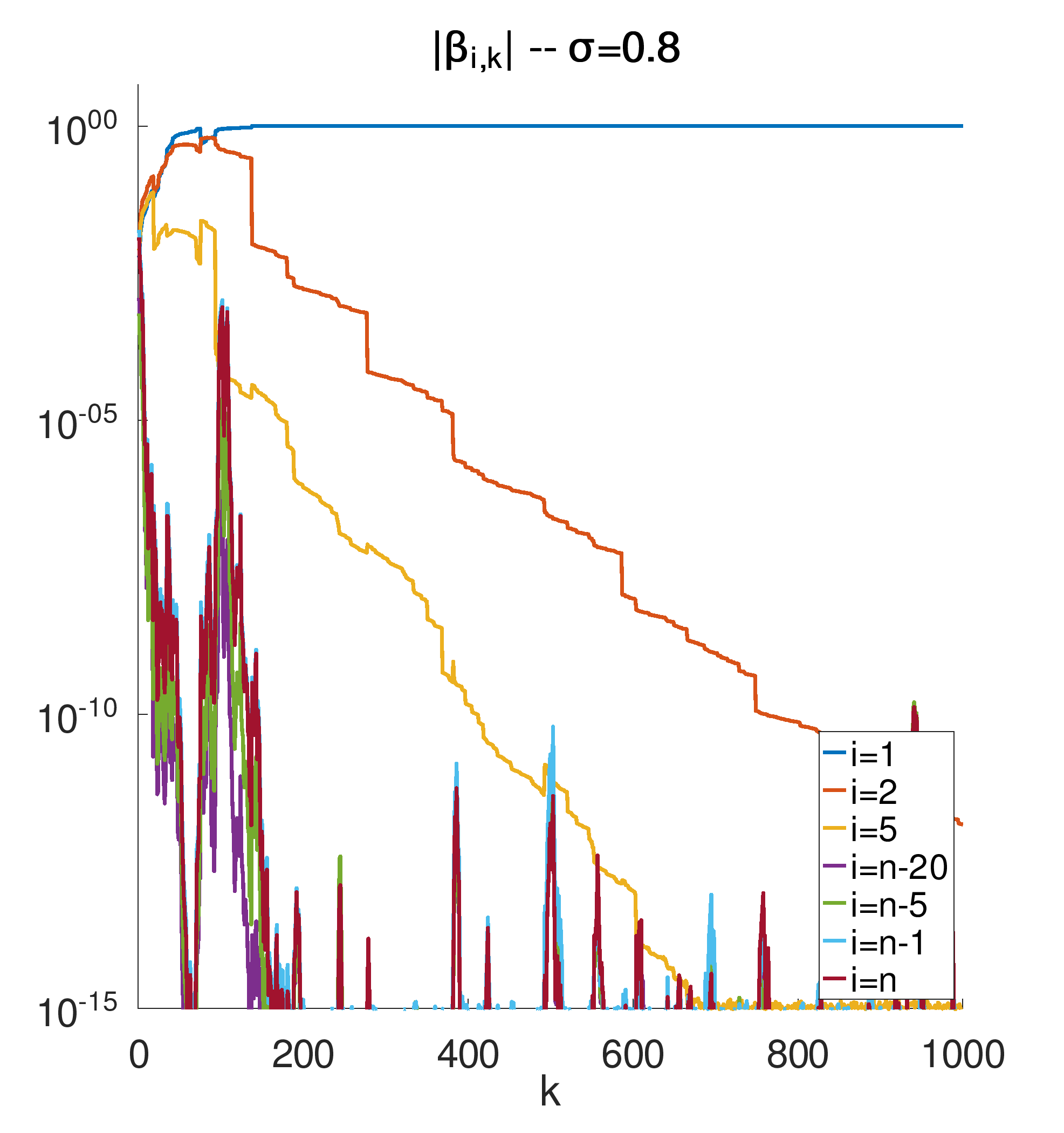}\hfill
  \includegraphics[width=0.5\linewidth]{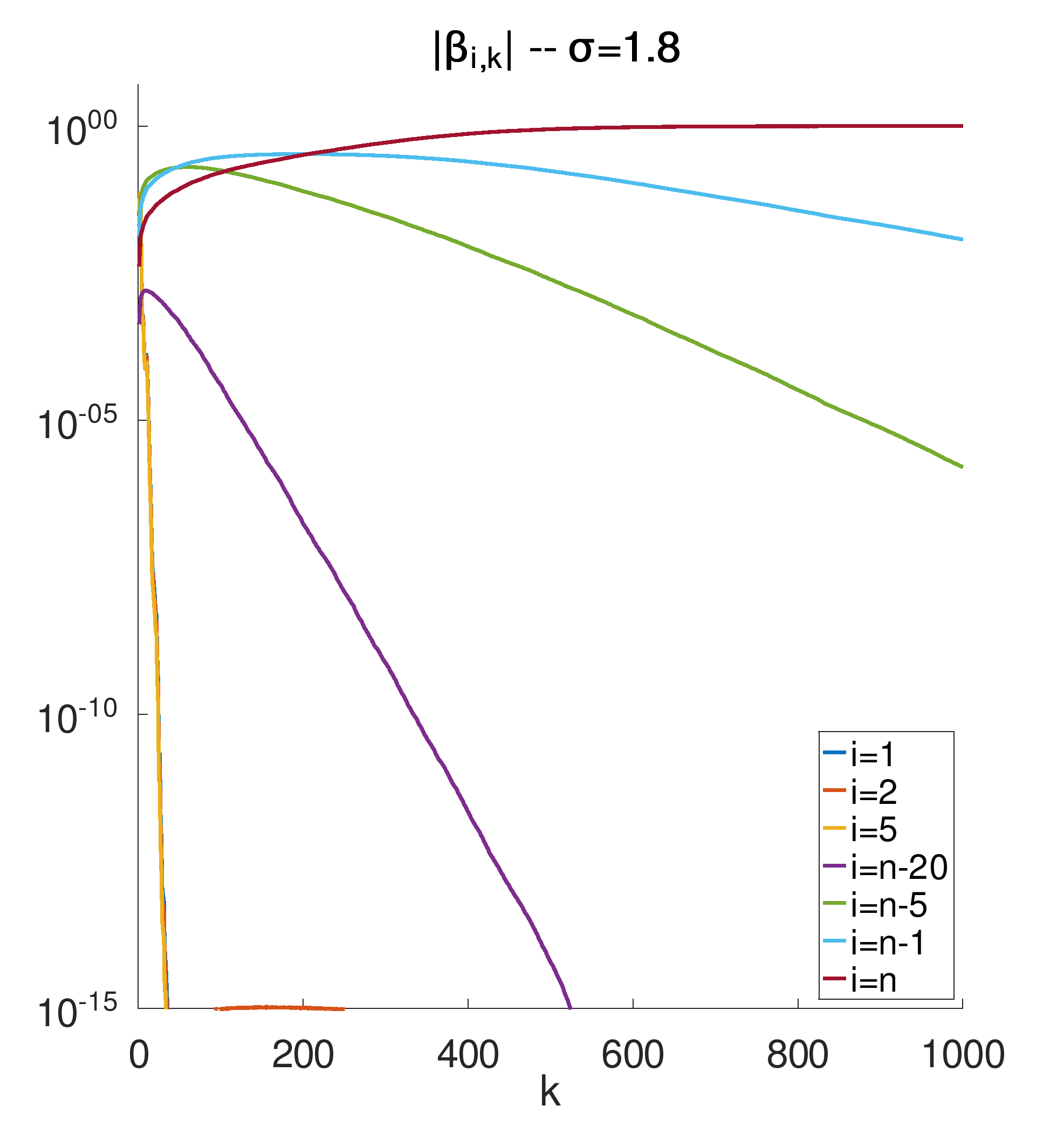}\\
  \includegraphics[width=0.5\linewidth]{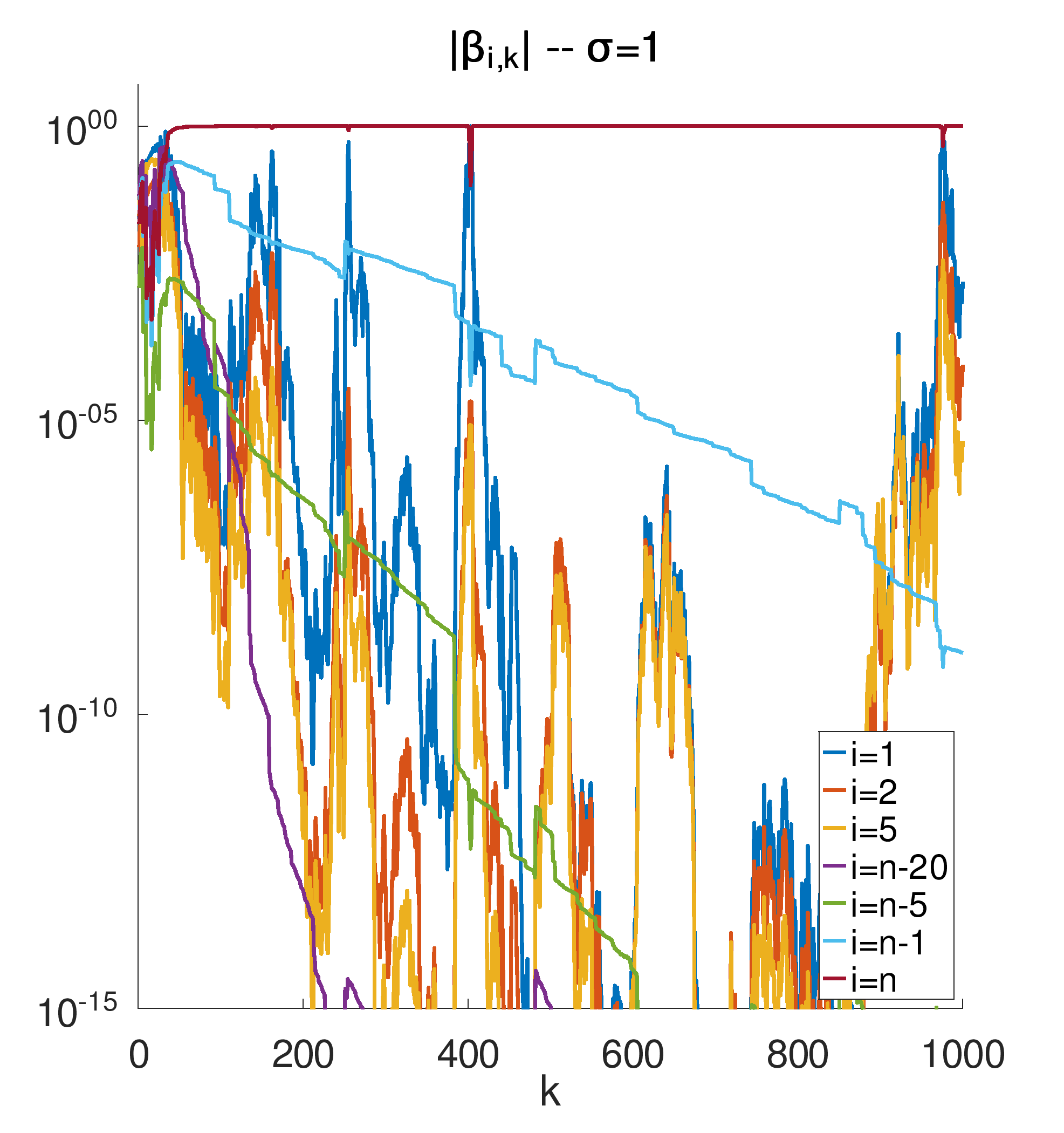}\hfill
  \includegraphics[width=0.5\linewidth]{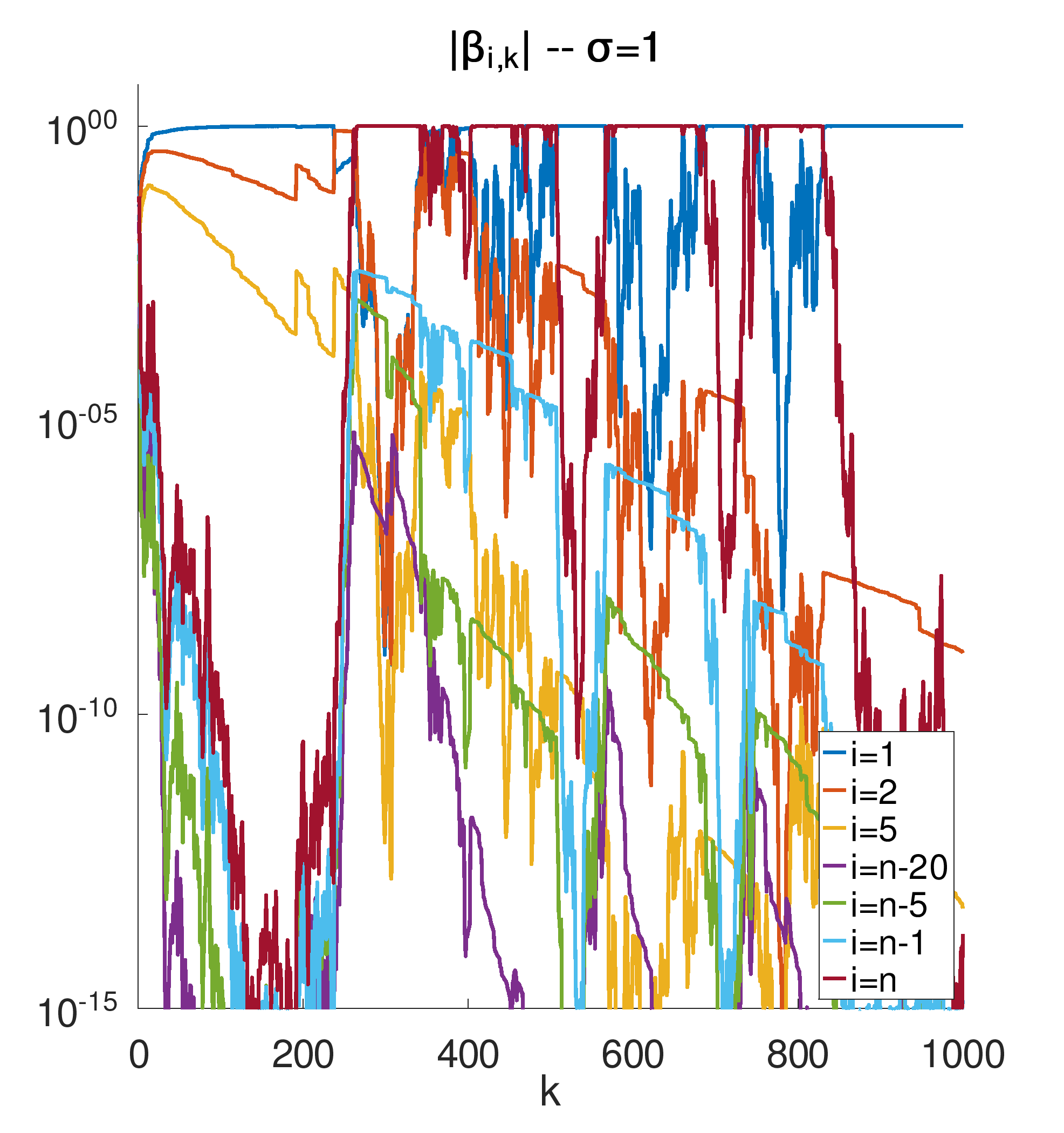}\\
  \caption{Convergence of $|\beta_{i,k}|$ obtained with the shifted and scaled
    power method \eqref{eq:SPower}, for a few selected values of $i$ and
    $\sigma\in\{0.8,1,1.8\}$. The $\xi_k$'s are chosen randomly in
    $(\frac{\lambda_1}{\sigma},\frac{\lambda_n}{\sigma})$. When $\sigma<1$, the
    normalized residuals are supported by the smallest mode while for
    $\sigma>1$, the normalized residuals are supported by the largest mode (all
    the other modes vanish asymptotically). When $\sigma=1$, one cannot conclude
    on the convergence of the residuals towards one of the two extremal modes,
    but all the other intermediate modes vanish asymptotically.}
  \label{fig:test_sigma_random}
\end{figure}

\subsection{A simplified analysis of the relaxed SD iteration}\label{sec:anlys}

In the previous section, we showed that, under appropriate assumptions on the
shift $(\xi_k)_{k\in\N}$, the scaled and shifted power method tends to produce
residuals with support on a few extreme eigenmodes. We also observed this
scenario for the MR and SD algorithms without the zigzag effect in Section~\ref{sec:quad}.
These observations lead us to now study, without loss of generality, the situation where we have
only two nonzero components $\beta_{1,0}$ and $\beta_{n,0}$.
First, according to \eqref{eq:betatil}, we recall that:
\begin{equation}\label{eq:tilbet0}
  \beta_{1,k+1} = \frac{1}{s_k} \left(\frac{\lbet}{\sigma} -
    \lambda_1\right) \beta_{1,k}, \qquad\text{and}\qquad
  \beta_{n,k+1} = \frac{1}{s_k} \left(\frac{\lbet}{\sigma} -
    \lambda_n\right) \beta_{n,k}.
\end{equation}
To simplify notations, let us introduce
\eq{eq:anot}
\gamma \equiv  \frac{\lambda_1+\lambda_n}{2}, \quad h \equiv \frac{\lambda_n -\lambda_1}{2}, \quad
a_k \equiv \frac{\lbet}{\sigma}  - \gamma \quad \Rightarrow \quad
\frac{\lbet}{\sigma}  - \lambda_1 = a_k+h,\quad
\frac{\lbet}{\sigma}  - \lambda_n = a_k-h.
\en
Then, \eqref{eq:tilbet0} becomes:
\begin{equation}\label{eq:tilbet1}
  \beta_{1,k+1} = \frac{1}{s_k} \left(a_k+h \right) \beta_{1,k} , \qquad
  \beta_{n,k+1} = \frac{1}{s_k} \left(a_k-h\right) \beta_{n,k} .
\end{equation}

\begin{lemma} \label{lem:evolvLam}
  Assume that we have a 2-dimensional SD iteration with components only on the
  eigenvectors $u_1$ and $u_n$ governed by equations \eqref{eq:tilbet1}. The
  following then holds:

  \begin{itemize}
    \item[(a)]  The coefficients $\beta_{1,k}$ and $\beta_{n,k}$ are such that
      \eq{eq:beta1betan}
      \beta_{1,k}^2  = \frac{\lambda_n - \lbet}{\lambda_n -
        \lambda_1}, \qquad
      \beta_{n,k}^2  = \frac{\lbet - \lambda_1}{\lambda_n - \lambda_1}.
      \en
    \item[(b)]  The new  Rayleigh quotient $ \llbet$ satisfies:
      \begin{align}
        \llbet
        &=  \lambda_1 + \frac{1}{s_k^2}\left[ (\lambda_n - \lambda_1)
        (a_k - h)^2 \beta_{n,k}^2 \right]
        =  \lambda_n - \frac{1}{s_k^2}\left[(\lambda_n - \lambda_1) (a_k+h)^2
        \beta_{1,k}^2 \right]  .  \label{eq:LamNew1}
      \end{align}
    \item[(c)]  The scaling factor $s_k$ is bounded from above and below as follows:
      \eq{eq:sBound}
      h  \le s_k \le h + \max
      \left\{\left|\frac{\lambda_n}{\sigma} - \gamma\right|,
        \left|\gamma -\frac{\lambda_1}{\sigma}\right|\right\}.  \en
  \end{itemize}
\end{lemma}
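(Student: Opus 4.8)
The plan is to treat the whole lemma as a set of statements about the two scalars $\beta_{1,k}^2$ and $\beta_{n,k}^2$ on the invariant subspace $\mathrm{span}\{u_1,u_n\}$, using that for SD the normalized residual satisfies $\beta_{1,k}^2+\beta_{n,k}^2=1$ and that $\lbet$ is the corresponding mean, $\lbet=\beta_{1,k}^2\lambda_1+\beta_{n,k}^2\lambda_n$ (see \eqref{eq:normalize}--\eqref{eq:meanBet}). For part~(a) I would read these two relations as a linear system in the unknowns $\beta_{1,k}^2,\beta_{n,k}^2$; solving it gives $\beta_{n,k}^2=(\lbet-\lambda_1)/(\lambda_n-\lambda_1)$ and $\beta_{1,k}^2=(\lambda_n-\lbet)/(\lambda_n-\lambda_1)$, which is \eqref{eq:beta1betan}. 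Crucially, this identity is not special to step $k$: it holds verbatim at step $k+1$ with $\lbet$ replaced by $\llbet$.

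For part~(b) I would combine that last observation with the one-step recursion \eqref{eq:tilbet1}. Applying part~(a) at index $k+1$ gives $\beta_{n,k+1}^2=(\llbet-\lambda_1)/(\lambda_n-\lambda_1)$, while \eqref{eq:tilbet1} gives $\beta_{n,k+1}^2=s_k^{-2}(a_k-h)^2\beta_{n,k}^2$. Equating the two and solving for $\llbet$ produces the first expression in \eqref{eq:LamNew1}; the second follows identically from the $u_1$-component, using $\beta_{1,k+1}^2=(\lambda_n-\llbet)/(\lambda_n-\lambda_1)$ together with $\beta_{1,k+1}^2=s_k^{-2}(a_k+h)^2\beta_{1,k}^2$. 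The only extra fact needed is that $s_k$ is the normalizing constant, i.e. $s_k^2=(a_k+h)^2\beta_{1,k}^2+(a_k-h)^2\beta_{n,k}^2$, which is just $\beta_{1,k+1}^2+\beta_{n,k+1}^2=1$.

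For part~(c) the cleanest route is to recognize $s_k$ as a genuine norm: since the SD residual is Euclidean-unit and $r_{k+1}=s_k^{-1}(\xi_k I-A)r_k$ with $\xi_k=\lbet/\sigma$ (see \eqref{eq:gamh}--\eqref{eq:SPower}), we have $s_k=\|(\xi_k I-A)r_k\|$. I would split $\xi_k I-A=(\xi_k-\gamma)I+(\gamma I-A)$. Because $r_k\in\mathrm{span}\{u_1,u_n\}$ and $|\gamma-\lambda_1|=|\gamma-\lambda_n|=h$, a direct computation gives $\|(\gamma I-A)r_k\|=h$, while $\|(\xi_k-\gamma)r_k\|=|\xi_k-\gamma|$. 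Since $\lbet\in[\lambda_1,\lambda_n]$ forces $\xi_k\in[\lambda_1/\sigma,\lambda_n/\sigma]$, the triangle inequality yields $s_k\le h+|\xi_k-\gamma|\le h+\max\{|\lambda_n/\sigma-\gamma|,|\gamma-\lambda_1/\sigma|\}$, exactly the right-hand side of \eqref{eq:sBound}.

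The lower bound is where I expect the real difficulty. The reverse triangle inequality applied to the same splitting gives only $s_k\ge\big|\,h-|\xi_k-\gamma|\,\big|\ge h-\max\{|\lambda_n/\sigma-\gamma|,|\gamma-\lambda_1/\sigma|\}$, i.e. the \emph{symmetric} estimate $|s_k-h|\le\max\{\ldots\}$ rather than the stronger $h\le s_k$. To reach $h\le s_k$ one would argue term by term: $s_k^2=(a_k+h)^2\beta_{1,k}^2+(a_k-h)^2\beta_{n,k}^2\ge h^2$ as soon as both $(a_k\pm h)^2\ge h^2$, which holds precisely when $|a_k|=|\xi_k-\gamma|\ge 2h$, i.e. when the shift $\xi_k$ lies outside the spectrum by a full spectral width. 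The obstacle is the complementary regime $|\xi_k-\gamma|<2h$ (shifts near the spectrum, typical when $\sigma$ is close to $1$): there the sharp identity is $s_k^2=(\xi_k-\lbet)^2+(\lambda_n-\lbet)(\lbet-\lambda_1)$, whose product term is at most $h^2$, so $s_k\ge h$ would require the shift term $(\xi_k-\lbet)^2=\lbet^2(1-\sigma)^2/\sigma^2$ to compensate. I would therefore examine carefully whether the full lower bound $h\le s_k$ requires an additional restriction on $\sigma$ (or on the range of $\lbet$ reachable by the relaxed SD dynamics), falling back on the robust symmetric bound otherwise.
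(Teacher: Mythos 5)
Your parts (a) and (b) are correct and essentially the paper's argument: (a) is obtained there too by solving the $2\times2$ system $\beta_{1,k}^2+\beta_{n,k}^2=1$, $\beta_{1,k}^2\lambda_1+\beta_{n,k}^2\lambda_n=\lbet$, while for (b) the paper expands $\llbet=s_k^{-2}\big[(a_k+h)^2\beta_{1,k}^2\lambda_1+(a_k-h)^2\beta_{n,k}^2\lambda_n\big]$ and adds and subtracts one term; your route (apply (a) at index $k+1$ and equate with \eqref{eq:tilbet1}) is equivalent and arguably cleaner. Your upper bound in (c) via the splitting $(\xi_k-\gamma)I+(\gamma I-A)$ and the triangle inequality is also equivalent to the paper's computation $s_k^2=a_k^2+h^2+2a_kh\cos(2\theta_1)\le(|a_k|+h)^2$, followed by maximizing the affine function $\lbet\mapsto|\lbet/\sigma-\gamma|$ at the endpoints of $[\lambda_1,\lambda_n]$.

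Your hesitation about the lower bound is not excessive caution: the inequality $h\le s_k$ in \eqref{eq:sBound} is false in general, and your identity $s_k^2=(\xi_k-\lbet)^2+(\lambda_n-\lbet)(\lbet-\lambda_1)$ is exactly the right way to see it. For $\sigma=1$ it gives $s_k^2=(\lambda_n-\lbet)(\lbet-\lambda_1)=h^2-a_k^2\le h^2$, with strict inequality whenever $\lbet\neq\gamma$ (which is the generic situation in the oscillating regime); for $\sigma\neq1$ it still fails whenever $\lbet^2(1-\sigma)^2/\sigma^2<h^2-(\lambda_n-\lbet)(\lbet-\lambda_1)$, e.g.\ for $\lbet$ near $\lambda_1$ with $\sigma=0.8$ and any condition number above $1.5$. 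The paper's own justification of the lower bound (``$|a_k|$ is smallest when $a_k=0$, and then $s_k=h$'') is a non sequitur, since $s_k$ depends on $\lbet$ both through $a_k$ and through the weights $\beta_{1,k}^2,\beta_{n,k}^2$ of \eqref{eq:beta1betan}, and minimizing $|a_k|$ does not minimize $s_k$. The correct two-sided statement is the symmetric one you derive, $|s_k-h|\le|a_k|\le\max\{|\lambda_n/\sigma-\gamma|,|\gamma-\lambda_1/\sigma|\}$; you should not attempt to prove $h\le s_k$ without extra hypotheses. Fortunately only the upper bound of \eqref{eq:sBound} is invoked in the subsequent theorem (to bound $s_k$ away from $+\infty$ in part (c)), so the error does not propagate, but the lemma's statement should be amended accordingly.
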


\begin{proof}
  \begin{itemize}
    \item[(a)] In all cases (that is for all values of $\sigma\in(0,2)$) we have
      the equations $\lbet = \beta_{1,k}^2 \lambda_1 + \beta_{n,k}^2 \lambda_n$
      and  $1 = \beta_{1,k}^2 + \beta_{n,k}^2 $. From these equations we can
      extract $\beta_{n,k}^2, \beta_{n,k}^2$ -- leading to
      \eqref{eq:beta1betan}.

    \item[(b)]
      The square of the normalization factor $s_k$ is:
      \eq{eq:s2}
      s_k^2 = (a_k+h)^2 \beta_{1,k}^2 + (a_k-h)^2 \beta_{n,k}^2.
      \en
      We consider now the next Rayleigh quotient $\llbet$ to write:
      \begin{align}
        \llbet
        &= \frac{(a_k+h)^2\beta_{1,k}^2 \lambda_1 + (a_k-h)^2\beta_{n,k}^2 \lambda_n}{s_k^2} \nonumber \\
        &= \frac{1}{s_k^2}\Big[(a_k+h)^2\beta_{1,k}^2 \lambda_1 + (a_k-h)^2\beta_{n,k}^2 \lambda_1
        - (a_k-h)^2\beta_{n,k}^2 \lambda_1 + (a_k-h)^2\beta_{n,k}^2 \lambda_n\Big] \nonumber\\
        &=  \lambda_1 + \frac{1}{s_k^2}\Big[(\lambda_n - \lambda_1) (a_k-h)^2
        \beta_{n,k}^2 \Big] .  \label{eq:prfLam1}
      \end{align}
      The proof of the second equality in \eqref{eq:LamNew1} follows similarly.

    \item[(c)] For the upper bound, note first that, since
      $1=\beta_{1,k}^2+\beta_{n,k}^2$, there exists $\theta_1$ such that
      $\beta_{1,k} = \cos\theta_1$ and $\beta_{n,k} = \sin\theta_1$. Then, from
      \eqref{eq:s2} it holds
      \begin{align*}
        s_k^2 &= (a_k^2+2a_kh+h^2)\beta_{1,k}^2 + (a_k^2-2a_kh+h^2)\beta_{n,k}^2\\
        & = a_k^2+h^2 + 2 a_kh (\beta_{1,k}^2-\beta_{n,k}^2) \\
        & = a_k^2+h^2 + 2 a_kh (\cos^2 \theta_1 -\sin^2 \theta_1^2) =
        a_k^2+h^2 + 2 a_kh \cos (2 \theta_1) \\
        & \le (|a_k|+h)^2 = \left(\left|\frac{\lbet}\sigma - \gamma
          \right| + h \right)^2  .
      \end{align*}
      Thus $ s_k \le h+ | \lbet/\sigma - \gamma | $. The maximum is reached when
      $\lbet$ is at one of the extremes of the interval $[\lambda_1,
      \lambda_n]$. Evaluating these extreme values will yield the resulting
      upper bound in \eqref{eq:sBound}. For the lower bound it suffices to take
      a look at Figure~\ref{fig:amplif} to see that the smallest possible value
      for the amplification factor $| \lbet/\sigma - \gamma |$ is reached when
      $\lbet/\sigma$ is in the middle of the interval $[\lambda_1,\lambda_n]$,
      i.e., when $a_k = 0$. For  this case, \eqref{eq:s2} shows that $s_k
      = h$, which is thus the smallest possible value for $s$.
  \end{itemize}
\end{proof}

We are interested in how Rayleigh quotients evolve from one step to the next.
The following result exploits the previous lemma to this end. We are also
interested in the difference between $\lbet /\sigma$ and $\gamma$ as this will
tell how the Rayleigh quotient move relatively to the line  $\sigma \gamma$, in
order to justifiy the observations from Figure~\ref{fig:RQfig}.

\begin{theorem}\label{thm:55}
  Let the assumptions of Lemma~\ref{lem:evolvLam} be satisfied.
  Then the following results hold.

  \begin{itemize}
    \item[(a)]
      From one iteration to the next, the Rayleigh quotient changes as follows:
      \eq{eq:LamIt}
      \llbet = \lbet - 2 \left(\frac{\lbet}{\sigma} - \gamma\right)
      \frac{(\lambda_n - \lbet)(\lbet - \lambda_1)}{s_k^2} .
      \en
      As a result, $\llbet$ will decrease from  $\lbet$ if $\lbet > \sigma \gamma$ and
      will increase when $\lbet < \sigma \gamma$.

    \item[(b)] In addition, \eqref{eq:LamIt} can be rewritten as:
      \eq{eq:LamIt1}
      \left( \frac{\llbet}{\sigma} - \gamma \right) =
      \left(\frac{\lbet}{\sigma} - \gamma \right)
      \left[ 1 - 2
        \frac{(\lambda_n - \lbet)(\lbet - \lambda_1)}{\sigma s_k^2} \right]
      \en
      which means that $\llbet$ will change sides relative to the biased median
      $\sigma \gamma$ whenever
      \eq{eq:chSides0}
      \sigma s_k^2 < 2 (\lambda_n - \lbet)(\lbet - \lambda_1)
      \en
      or equivalently,  when
      \eq{eq:chSides}
      \frac{ (\frac{\lbet}{\sigma}-\lambda_1)^2}{(\lbet-\lambda_1)}
      +  \frac{ (\lambda_n - \frac{\lbet}{\sigma})^2 } {(\lambda_n-\lbet)} < \frac{2(\lambda_n - \lambda_1)}{\sigma} .
      \en
      In particular, when $\sigma =1$ this inequality will \emph{always} be
      satisfied which implies that the Rayleigh quotient will oscillate around
      $\gamma$ in this case.

    \item[(c)]  When $\sigma \ge 2 \lambda_n /(\lambda_1+\lambda_n)$ then the
      sequence of Rayleigh quotients $\lbet$  converges. In any situation when
      $\lbet$  converges its  limit is either $\lambda_1$, or $ \lambda_n$ or
      $\sigma \gamma$.
  \end{itemize}
\end{theorem}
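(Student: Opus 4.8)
The plan is to prove the three parts in order, in each case feeding the two closed-form expressions for $\llbet$ recorded in Lemma~\ref{lem:evolvLam}(b) (equation~\eqref{eq:LamNew1}) together with the formulas \eqref{eq:beta1betan} for $\beta_{1,k}^2,\beta_{n,k}^2$ and the expression \eqref{eq:s2} for $s_k^2$.

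For part (a), I would start from the first identity in \eqref{eq:LamNew1}, substitute $\beta_{n,k}^2=(\lbet-\lambda_1)/(\lambda_n-\lambda_1)$ to get $\llbet-\lambda_1=(a_k-h)^2(\lbet-\lambda_1)/s_k^2$, and then form the increment $\llbet-\lbet=(\lbet-\lambda_1)\big[(a_k-h)^2-s_k^2\big]/s_k^2$. The crux is the one-line algebraic identity $(a_k-h)^2-s_k^2=-4a_kh\,\beta_{1,k}^2$, which follows from \eqref{eq:s2} and $\beta_{1,k}^2+\beta_{n,k}^2=1$. Inserting $\beta_{1,k}^2=(\lambda_n-\lbet)/(\lambda_n-\lambda_1)$ cancels the factor $h$, and recalling $a_k=\lbet/\sigma-\gamma$ produces exactly \eqref{eq:LamIt}. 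The sign assertion is then immediate: $(\lambda_n-\lbet)$, $(\lbet-\lambda_1)$ and $s_k^2$ are all nonnegative, so the increment carries the opposite sign of $\lbet/\sigma-\gamma=(\lbet-\sigma\gamma)/\sigma$, whence $\llbet$ decreases when $\lbet>\sigma\gamma$ and increases when $\lbet<\sigma\gamma$.

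For part (b), dividing \eqref{eq:LamIt} by $\sigma$ and subtracting $\gamma$ factors out $(\lbet/\sigma-\gamma)$ and yields \eqref{eq:LamIt1} directly; a sign change of $\llbet/\sigma-\gamma$ relative to $\lbet/\sigma-\gamma$ happens precisely when the bracketed factor is negative, i.e.\ when \eqref{eq:chSides0} holds. To pass from \eqref{eq:chSides0} to \eqref{eq:chSides} I would insert \eqref{eq:s2} with $a_k+h=\lbet/\sigma-\lambda_1$, $a_k-h=\lbet/\sigma-\lambda_n$ and the $\beta^2$ values from \eqref{eq:beta1betan}, then divide the inequality by $(\lambda_n-\lbet)(\lbet-\lambda_1)$ and multiply by $2h/\sigma=(\lambda_n-\lambda_1)/\sigma$: the two terms of $s_k^2$ separate cleanly into the two quotients of \eqref{eq:chSides}. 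The case $\sigma=1$ is then a one-line check, since \eqref{eq:chSides} collapses to $(\lbet-\lambda_1)+(\lambda_n-\lbet)<2(\lambda_n-\lambda_1)$, i.e.\ $\lambda_n-\lambda_1<2(\lambda_n-\lambda_1)$, which always holds; thus the sign flips at every step and $\lbet$ oscillates about $\gamma=\sigma\gamma$.

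For part (c), the convergence argument hinges on the observation that $2\lambda_n/(\lambda_1+\lambda_n)=\lambda_n/\gamma$, so the hypothesis $\sigma\ge 2\lambda_n/(\lambda_1+\lambda_n)$ is equivalent to $\sigma\gamma\ge\lambda_n$. Since $\lbet\in[\lambda_1,\lambda_n]$ (a convex combination of $\lambda_1,\lambda_n$ by \eqref{eq:beta1betan}), this gives $\lbet/\sigma-\gamma\le 0$ for every $k$, so by the sign statement of part (a) the sequence $\{\lbet\}$ is nondecreasing; being bounded above by $\lambda_n$ it converges by monotone convergence. To characterize any limit $\ell$, I would pass to the limit in \eqref{eq:LamIt}: the left-hand increment tends to $0$, while \eqref{eq:s2} and \eqref{eq:beta1betan} show $s_k^2$ to be a continuous function of $\lbet$ that, by the lower bound in \eqref{eq:sBound}, stays $\ge h^2>0$, hence converges to a strictly positive limit. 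Therefore $(\ell/\sigma-\gamma)(\lambda_n-\ell)(\ell-\lambda_1)=0$, forcing $\ell\in\{\sigma\gamma,\lambda_n,\lambda_1\}$. I expect the main obstacle to be the bookkeeping in part (b)---recognizing that dividing \eqref{eq:chSides0} by $(\lambda_n-\lbet)(\lbet-\lambda_1)$ is exactly what unmixes $s_k^2$ into the separated quotients of \eqref{eq:chSides}; the remaining steps are either short computations or a monotone-convergence argument.
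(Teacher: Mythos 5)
Your proposal is correct and follows essentially the same route as the paper: deriving \eqref{eq:LamIt} from Lemma~\ref{lem:evolvLam}(b) together with \eqref{eq:beta1betan} and \eqref{eq:s2}, obtaining \eqref{eq:LamIt1} by dividing by $\sigma$ and subtracting $\gamma$, translating \eqref{eq:chSides0} into \eqref{eq:chSides} via the explicit form of $s_k^2$, and proving (c) by monotone bounded convergence followed by passing to the limit in \eqref{eq:LamIt}. The only differences are cosmetic (you reach \eqref{eq:LamIt} from a single identity of \eqref{eq:LamNew1} plus the normalization rather than the paper's convex combination of both, and your limit argument in (c) invokes continuity of $s_k^2$ as a function of $\lbet$ with the lower bound $s_k\ge h$, where the paper uses the upper bound of \eqref{eq:sBound}), and both versions are valid.
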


\begin{proof}
  \begin{itemize}
    \item[(a)] We start from the two relations \eqref{eq:LamNew1}.
      Multiplying them respectively by $\beta_{1,k}^2, \beta_{n,k}^2$ and adding
      them yields
      \eq{eq:pfLm1}
      \llbet =
      \lbet + \frac{(\lambda_n - \lambda_1)\beta_{1,k}^2 \beta_{n,k}^2}{s_k^2}
      \Big[(a_k-h)^2-(a_k+h)^2\Big] =
      \lbet - 4 a_k h \frac{(\lambda_n - \lambda_1)\beta_{1,k}^2 \beta_{n,k}^2}{s_k^2} .
      \en
      Replacing $h$ and $a_k$ with their definitions in \eqref{eq:anot} and
      using \eqref{eq:beta1betan}, w e obtain
      \begin{align}
        \llbet &=
        \lbet - 2 \left(\frac{\lbet}{\sigma} - \gamma \right)
        \frac{(\lambda_n - \lambda_1)^2\beta_{1,k}^2 \beta_{n,k}^2}{s_k^2}
         = \lbet - 2 \left(\frac{\lbet}{\sigma} - \gamma\right)
        \frac{(\lambda_n - \lbet)(\lbet - \lambda_1)}{s_k^2} \label{eq:pfLm2}
      \end{align}

    \item[(b)] Let us go back to \eqref{eq:pfLm1} which gives
      \eq{eq:pfLm3}
      \llbet =
      \lbet - 2 a_k \frac{(\lambda_n - \lambda_1)^2 \beta_{1,k}^2 \beta_{n,k}^2}{s_k^2}
      \en
      Dividing by $\sigma$ and subtracting $\gamma$ we finally obtain
      \eq{eq:pfLm4}
      \frac{\llbet}{\sigma} -\gamma =
      \frac{ \lbet }{\sigma} - \gamma
      - 2 a_k \frac{(\lambda_n - \lambda_1)^2 \beta_{1,k}^2 \beta_{n,k}^2}{\sigma
        s_k^2}
      = \left(\frac{ \lbet }{\sigma} - \gamma \right) \left[1
        - 2 \frac{(\lambda_n - \lambda_1)^2 \beta_{1,k}^2 \beta_{n,k}^2}{\sigma
          s_k^2}\right]
      \en
      which can be written as
      \eq{eq:prfChSign}
      a_{k+1} = a_k \left(1 -2 \frac{1}{\sigma t_k} \right)  \quad \mbox{with} \quad
      t \equiv  \frac{s_k^2}{d^2 \beta_{1,k}^2 \beta_{n,k}^2}
      \quad\mbox{and}\quad d \equiv \lambda_n-\lambda_1 .
      \en
      According to \eqref{eq:beta1betan} the term $d^2 \beta_{1,k}^2
      \beta_{n,k}^2$ equals $(\lambda_n -\lbet) (\lbet - \lambda_1)$.  In order
      for $\lbet $ to change sides with respect to $\sigma \gamma$, we need to
      have $1-2/(\sigma t) <0$, i.e., $\sigma t < 2$. This happens when
      \eqref{eq:chSides0} is satisfied.   Also from \eqref{eq:s2} :
      \begin{equation}
        t  = \frac{(a_k+h)^2}{ d^2 \beta_{n,k}^2}  + \frac{(a_k-h)^2}{ d^2 \beta_{1,k}^2}
        =  \frac{1}{d} \left[\frac{ (\frac{\lbet}{\sigma}-\lambda_1)^2}{(\lbet-\lambda_1)}
          +  \frac{ (\lambda_n - \frac{\lbet}{\sigma})^2 } {(\lambda_n-\lbet)} \right] , \label{eq:pfLm5}
      \end{equation}
      from which \eqref{eq:chSides} follows.
      Finally, when $\sigma =1 $ the left hand side of \eqref{eq:chSides}
      evaluates to $ (\lbet - \lambda_1)  + (\lambda_n - \lbet)  =
      \lambda_n-\lambda_1$: \eqref{eq:chSides} is always satisfied when
      $\sigma=1$.

    \item[(c)] Since $\lbet \le \lambda_n$, when $\sigma \ge 2 \lambda_n
      /(\lambda_1+\lambda_n)$   we have  $\lbet / \sigma \le \gamma$. Thus,
      according to \eqref{eq:LamIt}, the Rayleigh quotients $(\lbet)_{k\in\N}$
      form an increasing sequence which is bounded from above by $\lambda_n$ and
      it will therefore converge. We now need to determine their limit, assuming
      just that the sequence converges.

      Assume the limit is neither $\lambda_1$ nor $\lambda_n$. Then,
      equation~\eqref{eq:beta1betan} from Lemma~\ref{lem:evolvLam} shows that (both)
      $\cos^2 \theta_1 = \beta_{n,k}^2$ and $\sin^2 \theta_1 = \beta_{n,k}^2$
      will converge to a positive value.
      Since $s_k $ is bounded from above independently of the iteration $k$, per
      \eqref{eq:sBound}, equation \eqref{eq:LamIt} implies the desired result
      that $ | \lbet/\sigma - \gamma| $ converges to zero.
  \end{itemize}
\end{proof}

This theorem explains partly the behavior of the relaxed SD iteration. If at a
given step the Rayleigh quotient $\lbet$ is above the biased median, i.e. if
$\lbet / \sigma - \gamma >0$, then \eqref{eq:LamIt} indicates that in the next
step the new Rayleigh quotient decreases from its current value. In the opposite
situation, it will increase.

We now explore inequality \eqref{eq:chSides} a little further. It can be shown
that the left-hand side is of the form $f(\lbet)$ where \eq{eq:foft} f(t) =
\frac{\lambda_n-\lambda_1}{\sigma}\left(2 - \frac{1}{\sigma}\right) +
\left(\frac{1}{\sigma} -1\right)^2 \left[\frac{\lambda_1^2}{t - \lambda_1} +
  \frac{\lambda_n^2}{\lambda_n -t} \right] .  \en An illustration is shown in
Figure~\ref{fig:pltthm}. The curve decreases from the left pole to a minimum
achieved at the harmonic mean $ 2 / (1/\lambda_1 + 1/\lambda_n)$ and increases
to infinity at its right pole.  The left pole is extremely sharp: for
$t = \lambda_1+10^{-6}$ the value is still only $13.15923..$ In most of the
interval the inequality will be satisfied which means that the Rayleigh quotient
will \emph{generally} oscillate around the biased median.  If $\lbet$ happens to
fall above the dashed line (say close to the right pole), then according to part
(a) of the theorem, it will decrease in the next step.  It will then either
converge or cross the line and start oscillating again. From our experience,
$\lbet$ converges only when $\sigma$ is large enough (close to 2).

\begin{figure}[ht]
  \centerline{\includegraphics[height=0.25\textheight]{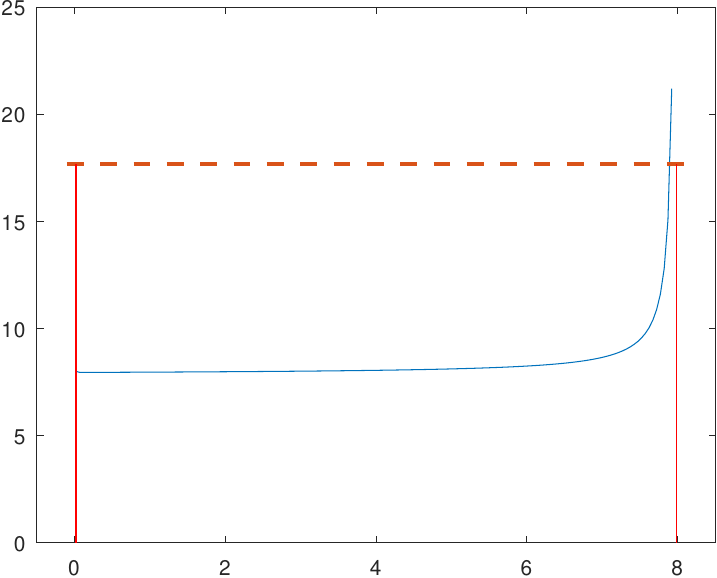}
\includegraphics[height=0.25\textheight]{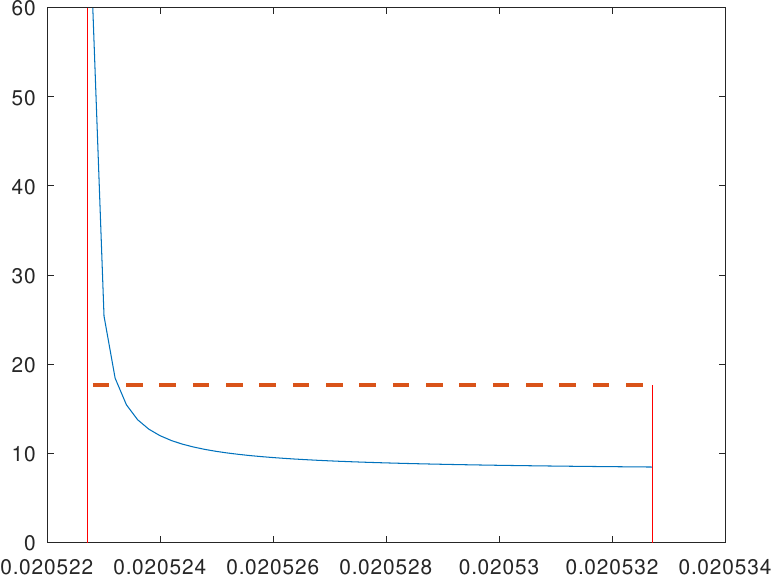}}
  \caption{Illustration of inequality \eqref{eq:chSides} with $\sigma=0.9$.
    The solid line is the left-hand side of \eqref{eq:chSides} as a function of
    $\lbet$.     The dashed one is the constant of the right-hand side. The pole
    on the left side is not visible because the curve is extremely sharp around
    it (right).  -- The right figure shows a zoom in interval
      $[\lambda_1 + 10^{-07},  \ \lambda_1 + 10^{-05}]$}
  \label{fig:pltthm}
\end{figure}

The case $\sigma = 1$ has received extensive coverage in the literature. In this
case, the scalar $t$ defined in \eqref{eq:pfLm5} in the proof of the theorem is
equal to one. Therefore \eqref{eq:prfChSign} becomes $a_{k+1} = - a_k$ or
$\left(\frac{\llbet}{\sigma} - \gamma \right) = - \left( \frac{\lbet}{\sigma} -
  \gamma \right)$.  Thus, $\lbet $ will \emph{equi-oscillate} exactly around
$\sigma \gamma$.
One special case when $\lbet$ converges is when
$\lbet/\sigma $ is always below the median $\gamma$ which takes
place in the particular situation stated in part (c) of the theorem.  It may be
possible to establish that $\lbet $ converges when $\sigma $ is
$<\lambda_n /(\lambda_n+\lambda_1)$ but larger that a certain
$\sigma_*$. Although we cannot prove this, our experience does show that a large
$\sigma $ often leads to convergence of the Rayleigh Quotients. See
Figure~\ref{fig:RQfig} (right) for an example showing a situation when
$\lbet$ converges.

In summary, the relaxed SD iteration leads to a situation where the residual
lies in a one or two-dimensional space. When $\sigma <1$, the Rayleigh quotients
will tend to oscillate around $\sigma \gamma$ and for this reason the gradient
will tend to converge (in direction) to the smallest eigenvector. Although this
analysis was restricted to the SD scheme, the same behavior  is observed in
practice for MR and it is therefore likely that similar results might be proved.
Finally, note that there are other scenarios depending on the value of $\sigma$
but what matters practically for the acceleration schemes developed in this
paper is that the gradient will be close to an eigenvector or a linear
combination of a small number of eigenvectors.

\section*{Acknowledgments and fundings}

All the authors are grateful to Université de Picardie Jules Verne for financial support of Yousef Saad's stay in Amiens and to LAMFA (UPJV and CNRS, UMR 7352) for financial support of Marcos Raydan's stay in Amiens, both in March 2025.
The authors are grateful to the two anonymous reviewers for their insightful comments and suggestions, which greatly enhanced the quality of this manuscript.
The third author is funded by national funds through the FCT – Fundação para a Ciência e a Tecnologia, I.P., under the scope of the projects UID/00297/2025 (https://doi.org/10.54499/UID/00297/2025) and UID/PRR/00297/2025 (https://doi.org/10.54499/UID/PRR/00297/2025) (Center for Mathematics and Applications).
The work of the fourth author  was supported by NSF grant: DMS-2513117.

\section*{Data availability}

All the simulations (except those with the LMSD algorithm which used the Python implementation from \cite{ferrandiLimitedMemoryGradient2025}) have been performed with a GNU Octave implementation of the algorithms. Scripts to reproduce all figures can be downloaded at

\begin{center}
\url{https://plmlab.math.cnrs.fr/gkemlin/lba-paper}
\end{center}

{\bibliographystyle{siam}
\bibliography{MarcosBib}}

\end{document}